\newtheorem{thm}{Theorem}[section]
\newtheorem{Lemma}[thm]{Lemma}
\newtheorem{cor}[thm]{Corollary}
\title{{4-Factor-criticality of vertex-transitive graphs\thanks{This
work is supported by NSFC (grant no. 11371180).}}}
\author{Wuyang Sun, Heping Zhang\footnote{Corresponding author.} \\
\small{School of Mathematics and Statistics, Lanzhou University,
Lanzhou, Gansu 730000, P. R. China}
\\\small{E-mail addresses: sunwy09@lzu.edu.cn, zhanghp@lzu.edu.cn}}
\date{}
\begin{document}
\pagestyle{plain}
\pagenumbering{arabic}
\maketitle
\begin{abstract}
A graph of order $n$ is {\em $p$-factor-critical}, where $p$ is an integer of the same parity as $n$, if the removal of any set of $p$ vertices results in a graph with a perfect matching. 1-factor-critical graphs and 2-factor-critical graphs are well-known
factor-critical graphs and bicritical graphs, respectively. It is known that if a connected vertex-transitive graph has odd order, then it is factor-critical, otherwise it is elementary bipartite or bicritical. In this paper, we show that a connected vertex-transitive non-bipartite graph of even order at least $6$ is $4$-factor-critical if and only if its degree is at least 5. This result implies that each connected non-bipartite Cayley graphs of even order and degree at least 5 is 2-extendable.
\end{abstract}

\textbf{Keywords:} Vertex-transitive graph; 4-Factor-criticality; Matching; Connectivity
\section{Introduction}

Only finite and simple graphs are considered in this paper. A {\em matching} of a graph is a set of its mutually nonadjacent edges. A {\em perfect matching} of a graph is a matching covering all its vertices. A graph is called {\em factor-critical} if the removal of any
one of its vertices results in a graph with a perfect matching. A graph is called {\em bicritical} if the removal of any pair of its distinct vertices results in a graph with a perfect matching. The concepts of factor-critical and bicritical graphs were introduced by Gallai \cite{Gallai} and by Lov\'{a}sz \cite{lovasz}, respectively. In matching theory, factor-critical graphs and bicritical graphs are two basic bricks in matching structures of graphs \cite{M.D. Plummer}. Later on, the two concepts were generalized
to the concept of $p$-factor-critical graphs by Favaron \cite{Favarvon} and Yu \cite{Yu}, independently. A graph of order $n$ is said to be {\em $p$-factor-critical}, where $p$ is an integer of the same parity as $n$, if the removal of any $p$ vertices results in a graph with a perfect matching.

$q$-extendable graphs was proposed by Plummer \cite{M.D. Plummer} in 1980. A connected graph of even order $n$ is {\em $q$-extendable}, where $q$ is an integer with $0\leq q<n/2$, if it has a perfect matching and every matching of size $q$ is contained in one of its perfect matchings. Favaron \cite{O. Favaron} showed that for $q$ even, every connected non-bipartite $q$-extendable graph is $q$-factor-critical. In 1993 Yu \cite{Yu} introduced an analogous concept for graphs of odd order. A connected graph of odd order is {\em $q\frac{1}{2}$-extendable}, if the removal of any one of its vertices results in a $q$-extendable graph.

A graph $G$ is said to be {\em vertex-transitive} if for any two vertices $x$ and $y$ in $G$ there is an automorphism $\varphi$ of $G$ such that $y=\varphi(x)$. A graph with a perfect matching is {\em elementary} if the union of its all perfect matchings forms a connected subgraphs. In \cite{plummer}, there is a following classic result about the factor-criticality and bicriticality of vertex-transitive graphs.

\begin{thm}[\cite{plummer}]\label{vtg} Let $G$ be a connected vertex-transitive
graph of order $n$. Then\\
(\textsl{a}) $G$ is factor-critical if $n$ is odd;\\
(b) $G$ is either elementary bipartite or bicritical if $n$ is even.
\end{thm}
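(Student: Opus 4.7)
The plan is to handle the two cases separately, in both cases exploiting the fact that the Gallai--Edmonds decomposition $V(G)=D(G)\cup A(G)\cup C(G)$ is invariant under every automorphism, so each of $D(G),A(G),C(G)$ is a union of vertex orbits of $\mathrm{Aut}(G)$; under vertex-transitivity there is only one orbit.

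For part (a), since $n$ is odd, $G$ has no perfect matching, so some vertex is missed by a maximum matching and $D(G)\neq\emptyset$. By the orbit argument above, $D(G)=V(G)$ and hence $A(G)=C(G)=\emptyset$. The Gallai--Edmonds structure theorem then states that every component of $G[D(G)]=G$ is factor-critical, and since $G$ is connected, $G$ itself is factor-critical.

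For part (b), I would first show that $G$ has a perfect matching: if $D(G)\neq\emptyset$, the same orbit argument forces $D(G)=V(G)$, but Gallai--Edmonds requires each component of $G[D(G)]$ to have odd order, contradicting $n$ even. So $D(G)=\emptyset$ and $G$ has a perfect matching $M$. Next I split on bipartiteness. If $G$ is bipartite, vertex-transitivity implies every edge is the image of some edge of $M$ under an automorphism, hence every edge lies in a perfect matching, so $G$ is matching-covered; together with connectedness this is exactly the definition of elementary bipartite. If $G$ is non-bipartite, the goal is bicriticality: for any two distinct vertices $u,v$ one must show $G-u-v$ has a perfect matching. Assuming it does not, Tutte's theorem yields a set $S\subseteq V(G)\setminus\{u,v\}$ with $o(G-\{u,v\}-S)>|S|$, and I would seek to transport such a Tutte obstruction around $G$ by automorphisms and play it against an odd cycle (whose existence follows from non-bipartiteness) to derive a contradiction.

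The main obstacle is clearly the non-bipartite bicritical step: vertex-transitivity gives transitivity on vertices but not on pairs, so one cannot directly orbit the pair $\{u,v\}$ away. Hence non-bipartiteness must be used crucially, since elementary bipartite vertex-transitive graphs (e.g.\ even cycles, hypercubes) are vertex-transitive of even order yet far from bicritical. I expect the right tool is to combine the Gallai--Edmonds decomposition of $G-u$ (which by part (a)-style reasoning behaves very rigidly under the stabilizer of $u$) with the observation that an odd cycle prevents the kind of ``bipartite barrier'' that would otherwise persist; iterating automorphisms that fix one endpoint of such an odd cycle should give enough translates of the putative Tutte set to saturate $V(G)$ inconsistently.
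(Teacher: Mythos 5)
Part (a) of your argument is sound: the Gallai--Edmonds decomposition is $\mathrm{Aut}(G)$-invariant, a single vertex orbit forces $D(G)=V(G)$, and connectedness plus the structure theorem gives factor-criticality. The existence of a perfect matching in the even case is also fine. Note, however, that the paper does not prove this theorem at all --- it is quoted from Lov\'asz and Plummer's \emph{Matching Theory}, so the comparison is with the classical proof there, which rests on a careful analysis of (maximal) barriers in vertex-transitive graphs; that is exactly the machinery your sketch stops short of.

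There are two genuine gaps in your part (b). First, the bipartite case: from vertex-transitivity you cannot conclude that ``every edge is the image of some edge of $M$ under an automorphism.'' Automorphisms act on vertices, not transitively on edges; the orbit of a fixed perfect matching $M$ under $\mathrm{Aut}(G)$ is a union of edge orbits and may a priori miss an entire edge orbit, since vertex-transitive graphs need not be edge-transitive. What you actually need is that $G$ is $1$-extendable (every edge lies in \emph{some} perfect matching), and proving that already requires a barrier argument: if $uv$ lies in no perfect matching, Tutte's theorem applied to $G-u-v$ produces a set $T\supseteq\{u,v\}$ with $o(G-T)=|T|$, and one must then exploit transitivity to rule such a barrier out --- this is not a one-line consequence of moving $M$ around. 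Second, and more seriously, the non-bipartite bicritical case is not a proof but a statement of intent: ``transport such a Tutte obstruction around $G$ by automorphisms and play it against an odd cycle'' is precisely the hard step, and you give no mechanism for it. The difficulty you yourself identify --- transitivity on vertices does not let you move the pair $\{u,v\}$ --- is real, and the standard resolution (as in Lov\'asz--Plummer) is to study a maximal barrier $B$ containing the obstruction, show via vertex-transitivity and a counting/averaging argument over translates of $B$ that the components of $G-B$ would have to organize $G$ into a bipartite-like structure, and only then invoke non-bipartiteness to reach a contradiction. Without carrying out that analysis (or an equivalent one), the bicriticality claim, which is the substantive half of the theorem and the half this paper actually builds on, remains unproved.
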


A question arises naturally: Does a vertex-transitive non-bipartite graph has larger $p$-factor-criticality?

In fact, the $q$-extendability  and $q\frac{1}{2}$-extendability of Cayley graphs, an important class of vertex-transitive graphs, have been investigated in literature. It was proved in papers \cite{Chan,Chen,Miklavi} that a connected Cayley graph of even order on an abelian group, a dihedral group or a generalized dihedral group is 2-extendable except for several circulant graphs of degree at most 4.  Miklavi\v{c} and \v{S}parl \cite{Miklavi} also showed that a connected Cayley graph on an abelian group of odd order $n\geq3$ either is a cycle or is $1\frac{1}{2}$-extendable. Chan et al. \cite{Chan} raised the problem of characterizing 2-extendable Cayley graphs.

In \cite{Zhang}, we showed that a connected vertex-transitive graph of odd order $n\geq3$ is 3-factor-critical if and only if it is not a cycle. This general result is stronger than $1\frac{1}{2}$-extendability of Cayley graphs. In this paper, we obtain the following main result which gives a simple characterization of
4-factor-critical vertex-transitive non-bipartite graphs.

\begin{thm}\label{main} Let $G$ be a connected vertex-transitive non-bipartite graph of degree $k$ and of even order at least $6$. Then $G$ is $4$-factor-critical if and only if $k\geq5$.
\end{thm}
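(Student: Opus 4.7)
Because $G$ is non-bipartite of even order, it is not an even cycle, so $k\geq 3$. Fix any vertex $v\in V(G)$. If $k=4$, take $T=N(v)$; if $k=3$, take $T=N(v)\cup\{u\}$ for any $u\in V(G)\setminus N[v]$ (such a $u$ exists because $|N[v]|=4$ and $n\geq 6$). In either case $|T|=4$ and $v$ is isolated in the even-order graph $G-T$, so $G-T$ has no perfect matching and $G$ is not $4$-factor-critical.

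\textbf{Sufficiency, setup.} Assume $k\geq 5$ and, for contradiction, that $G$ is not $4$-factor-critical. Applying Tutte's theorem to a 4-set $T$ for which $G-T$ has no perfect matching yields $S\subseteq V(G)\setminus T$ with $o((G-T)-S)\geq |S|+2$, and then $U:=T\cup S$ satisfies $|U|\geq 4$ and $o(G-U)\geq |U|-2$. On the other hand, $G$ is bicritical by Theorem~\ref{vtg}(b), which is equivalent to $o(G-X)\leq |X|-2$ for every $X\subseteq V(G)$ with $|X|\geq 2$. Hence $o(G-U)=|U|-2$, and I fix such a $U$ with $|U|$ minimum. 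Let $C_1,\dots,C_m$ be the odd components and $D_1,\dots,D_s$ the even components of $G-U$, with $m=|U|-2$.

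\textbf{Structural constraints.} A classical result on connected vertex-transitive graphs states that the edge-connectivity equals the degree, so each $C_i$ and each $D_j$ sends at least $k$ edges to $U$. Combined with $|E(U,V(G)\setminus U)|=k|U|-2\,e(G[U])\leq k|U|$ this gives
\[
s\leq 2\qquad\text{and}\qquad 2\,e(G[U])\leq k(2-s).
\]
For $w\in U$, let $a_w$ denote the number of odd components $C_i$ having a neighbor of $w$. Reinserting $w$ into $G-U$ merges $w$ with every adjacent component into one new component whose size has the parity of $1+a_w$, and hence
\[
o\bigl(G-(U\setminus\{w\})\bigr)=(m-a_w)+\mathbf{1}[a_w\text{ even}].
\]
Bicriticality applied to $U\setminus\{w\}$ forces $a_w\geq 1$ when $a_w$ is odd and $a_w\geq 2$ when $a_w$ is even. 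When $|U|\geq 5$, the minimality of $|U|$ strengthens these to $a_w\geq 3$ in the odd case and $a_w\geq 4$ in the even case.

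\textbf{Contradiction and main obstacle.} When $|U|\geq 5$ the bound $a_w\geq 3$ forces each $w\in U$ to have three distinct neighbors outside $U$, so at most $k-3$ neighbors inside $U$, yielding $2\,e(G[U])\leq (k-3)|U|$. Merging this with $2\,e(G[U])\leq k(2-s)$, with the lower bound $\sum_i |E(C_i,U)|\geq k(|U|-2)$, and with Mader's vertex-connectivity inequality $\kappa(G)\geq \tfrac{2(k+1)}{3}$ (which already forces $|U|\geq 5$ when $k\geq 6$), the arithmetic narrows $|U|$ to $\{4,5\}$, and in both cases each $|E(C_i,U)|$ must equal $k$ exactly: the $C_i$'s are fragments bordered by minimum edge cuts attached very uniformly to $U$. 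The main obstacle is the elimination of these extremal configurations: one must exploit vertex-transitivity and the structure of minimum fragments to show that such a configuration in a non-bipartite vertex-transitive graph of degree $k\geq 5$ forces either a bipartite quotient (violating the non-bipartite hypothesis) or an irregularity incompatible with vertex-transitivity, yielding the desired contradiction.
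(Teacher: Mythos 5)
Your necessity argument is fine (and even more elementary than the paper's, which just combines Theorem~\ref{pfc} with $\lambda(G)=k$ from Theorem~\ref{econnectivity}), and your sufficiency setup — bicriticality via Theorem~\ref{vtg}, a set $U$ with $o(G-U)=|U|-2$, the bounds $s\leq 2$, $2e(G[U])\leq k(2-s)$, and the lower bounds $a_w\geq 3$ (odd case), $a_w\geq 4$ (even case) from minimality — is correct and parallels the paper's starting point (Lemma~\ref{no4fc} and inequality (1) in Section~4). But the step ``the arithmetic narrows $|U|$ to $\{4,5\}$'' is not justified and is in fact false: none of the inequalities you list bounds $|U|$ from above. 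The degree-sum identity gives $|E(U,\overline U)|=k|U|-2e(G[U])$, the component condition gives $|E(U,\overline U)|\geq k(|U|-2+s)$, and $a_w\geq 3$ only caps $e(G[U])$; all of these are simultaneously satisfiable for arbitrarily large $|U|$ (e.g.\ $U$ independent, every component receiving exactly $k$ edges spread over at least three components per vertex). Indeed the paper's own analysis repeatedly \emph{forces the opposite conclusion}: in several regimes it proves $|X|\geq 7$ or $|X|\geq 10$ for the analogous set (Lemma~\ref{component}), so a small-$|U|$ reduction cannot be the route to a contradiction.

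More fundamentally, your closing paragraph concedes rather than closes the gap: ``one must exploit vertex-transitivity and the structure of minimum fragments\dots'' is exactly the hard part, and it is the entire content of the paper's Sections~3--4. There the contradiction is obtained not from a bounded $|U|$ but from a girth dichotomy combined with the $s$-restricted edge-connectivity machinery: one shows $\lambda_s(G)\leq 3k$ for a suitable $s\in\{4,\dots,8\}$, proves lower bounds on the size of $\lambda_s$-atoms (Lemmas~\ref{5rec}--\ref{5rec18}), deduces that the atoms are imprimitive blocks whose orbits partition $V(G)$ into $(k-1)$-regular vertex-transitive pieces (Theorem~\ref{small}, Lemmas~\ref{block} and~\ref{6to7rec}), and then plays this partition against the odd-component structure of $G-X$, using the independence-number and odd-girth lemmas (Lemmas~\ref{independent}--\ref{K36}) to rule out bipartite-like configurations. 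None of this, nor any substitute for it, appears in your proposal; as written it establishes only the easy preliminary reductions and leaves the theorem unproved.
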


By Theorem \ref{main}, we know that all connected non-bipartite Cayley graphs of even order and degree at least 5 is 2-extendable.

The necessity of Theorem \ref{main} is clear. Our main task is to
show the sufficiency of Theorem \ref{main} by contradiction. Suppose
that $G$ is a connected non-bipartite vertex-transitive graph $G$ of
even order at least 6 and of degree at least 5 but $G$ is not
4-factor-critical. By the $s$-restricted edge-connectivity of $G$,
we find that in most cases a suitable integer $s$ can be chosen such
that every $\lambda_{s}$-atom of $G$ is an imprimitive block. Then
we can deduce contradictions. Some preliminary results are presented
in Section 2 and some properties of $\lambda_{s}$-atoms of $G$ which
are used to show their imprimitivity are proved in Section 3.
Finally, we complete the proof of Theorem \ref{main} in Section 4.

\section{Preliminaries}

In this section, we introduce some notations and results needed in this paper.

Let $G=(V(G),E(G))$ be a graph. For $X\subseteq V(G)$, let
$\overline{X}$ = $V(G)\backslash X$. For $Y\subseteq\overline{X}$,
denote by $[X,Y]$ the set of edges of $G$ with one end in $X$ and
the other in $Y$. In particular, we denote $[X,\overline{X}]$ by
$\nabla(X)$ and $|\nabla(X)|$ by $d_{G}(X)$. Denote by $N_{G}(X)$
the set of vertices in $\overline{X}$ which are ends of some edges
in $\nabla(X)$. If $X=\{v\}$, then $X$ is usually written to $v$.
Vertices in $N_{G}(v)$ are called the neighbors of $v$. If no
confusion exists, the subscript $G$ are usually omitted. Denote by
$G[X]$ the subgraph induced by $X$ and denote by $G-X$ the subgraph
induced by $\overline{X}$. The set of edges in $G[X]$ is denoted by
$E(X)$. Denote by $c_{0}(G)$ the number of the components of $G$
with odd order. For a subgraph $H$ of $G$, we denote
$d_{G}(V(H_{i}))$ and $\nabla(V(H_{i}))$ by $d_{G}(H_{i})$ and
$\nabla(H_{i})$, respectively.

For a connected graph $G$, a subset $F\subseteq E(G)$ is said to be an {\em edge-cut} of $G$ if $G-F$ is disconnected, where $G-F$ is the graph with vertex-set $V(G)$ and edge-set $E(G)\backslash F$. The {\em edge-connectivity} of $G$ is the minimum cardinality over all the edge-cuts of $G$, denoted by $\lambda(G)$. A subset $X\subseteq V(G)$ is called a {\em vertex-cut} of $G$ if $G-X$ is disconnected. The {\em vertex-connectivity} of $G$ of order $n$, denoted by $\kappa(G)$, is $n-1$ if $G$ is the complete graph $K_{n}$ and is the minimum cardinality over all the vertex-cuts of $G$ otherwise. It is well known that $\kappa(G)\leq\lambda(G)\leq\delta(G)$, where $\delta(G)$ is the minimum vertex-degree of $G$.

There are two properties of $p$-factor-critical graphs.

\begin{thm}[\cite{Favarvon,Yu}]\label{k-fc} A graph $G$ is $p$-factor-critical if and
only if $c_{0}(G-X)\leq|X|-p$ for all $X\subseteq V(G)$ with
$|X|\geq p$.
\end{thm}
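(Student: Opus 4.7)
The plan is to derive both directions as straightforward applications of Tutte's 1-factor theorem, using the observation that $p$-factor-criticality is the statement that $G-T$ has a perfect matching for every $p$-subset $T\subseteq V(G)$, and that $|V(G-T)|$ is even because $|V(G)|$ and $p$ share the same parity.

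For the necessity, I would fix an arbitrary $X\subseteq V(G)$ with $|X|\geq p$ and pick any $p$-subset $T\subseteq X$. Since $G$ is $p$-factor-critical, $G-T$ admits a perfect matching, so by Tutte's theorem applied inside $G-T$ with the vertex set $S:=X\setminus T$ of size $|X|-p$, one obtains $c_{0}((G-T)-S)\leq |S|$. Because $(G-T)-S=G-X$, this gives exactly $c_{0}(G-X)\leq |X|-p$.

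For the sufficiency, assume the Tutte-type condition and fix an arbitrary $p$-subset $T\subseteq V(G)$; I need to verify that $G-T$ has a perfect matching via Tutte's theorem. For any $S\subseteq V(G-T)$, set $X:=T\cup S$, so $|X|=p+|S|\geq p$ and the hypothesis yields $c_{0}(G-X)\leq |X|-p=|S|$. Since $G-X=(G-T)-S$, Tutte's condition holds in $G-T$; together with the parity observation (which rules out the empty-set obstruction) this produces the desired perfect matching.

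There is no real obstacle here: the content of the theorem is entirely carried by Tutte's 1-factor theorem, and the role of the parity hypothesis is only to guarantee that the graphs $G-T$ to which Tutte is applied have even order. The only care needed is the bookkeeping between the two roles played by $X$: in necessity $X$ is given and $T$ is chosen inside it, while in sufficiency $T$ is given and $X$ is built by adjoining an arbitrary $S$.
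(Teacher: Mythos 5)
Your argument is correct: both directions follow from Tutte's 1-factor theorem exactly as you describe, and the bookkeeping between $X$, $T$ and $S$ is handled properly (note that the empty-set case of Tutte's condition is already covered by the hypothesis with $X=T$, so the parity remark is not even needed there). The paper itself states this theorem without proof, citing Favaron and Yu, and your Tutte-based derivation is essentially the standard proof given in those references, so there is nothing to add.
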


\begin{thm}[\cite{Favarvon}]\label{pfc} If a graph $G$ is $p$-factor-critical with $1\leq p<|V(G)|$, then $\kappa(G)\geq p$ and $\lambda(G)\geq p+1$.
\end{thm}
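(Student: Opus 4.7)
The plan is to derive both conclusions from Theorem~\ref{k-fc}, which characterizes $p$-factor-criticality by the Tutte-type condition $c_0(G-X)\le |X|-p$ for every $X\subseteq V(G)$ with $|X|\ge p$. In each case I would assume the connectivity bound fails and construct a set $X\subseteq V(G)$ of size exactly $p$ with $c_0(G-X)\ge 1$, contradicting $c_0(G-X)\le |X|-p=0$.

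For $\kappa(G)\ge p$: if $G=K_n$ then $\kappa(G)=n-1\ge p$ because $p<n$, so assume $G\ne K_n$ and fix a minimum vertex-cut $T$ of size $s\le p-1$, with components $C_1,\ldots,C_m$ ($m\ge 2$) of $G-T$ having orders $n_1,\ldots,n_m$. Put $X:=T\cup Z$, where $Z\subseteq\bigcup_i V(C_i)$ with $|Z|=p-s$; then $|X|=p$ and $|V(G)\setminus X|=n-p$ is even since $n\equiv p\pmod 2$. The key step is to choose $Z$ so that $G-X$ contains an odd component. A parity argument on the multiset $\{n_1,\ldots,n_m\}$ shows that one can always deposit the $p-s$ removed vertices either entirely in a single $C_i$ or split across two of them in such a way that at least one of the surviving (sub)components has odd order; this yields $c_0(G-X)\ge 1$, the desired contradiction with Theorem~\ref{k-fc}.

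For $\lambda(G)\ge p+1$: from Step~1 we already have $\lambda(G)\ge\kappa(G)\ge p$, so it suffices to rule out $\lambda(G)=p$. Assume $F$ is a minimum edge-cut of size $p$ giving a bipartition $V(G)=A\sqcup B$, and let $X_A\subseteq A$ be the set of $A$-endpoints of edges in $F$, so $|X_A|\le p$. Enlarge $X_A$ to a set $X$ with $|X|=p$ by adjoining additional vertices from $A\cup B$. Since $X_A\subseteq X$, no edge of $G-X$ joins $A\setminus X$ to $B\setminus X$, so $G-X$ is the disjoint union of $G[A\setminus X]$ and $G[B\setminus X]$, whose orders sum to $n-p$ (even). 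A parity argument parallel to Step~1, choosing how many extra vertices to draw from $A$ versus $B$, makes $|A\setminus X|$ (and hence $|B\setminus X|$) odd; each side then carries at least one odd component, producing $c_0(G-X)\ge 1$ and contradicting Theorem~\ref{k-fc}.

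The main obstacle in both halves is the parity bookkeeping. In Step~1, the delicate configuration is when every $n_i$ has the same parity as $p-s$, in which case one has to spread the removals across two components rather than concentrating them in one, using $m\ge 2$ for the extra degree of freedom. In Step~2, the tight cases are $|X_A|=p$ or $|A|\le p$, where the choice of $X$ looks forced; here the congruence $n\equiv p\pmod 2$ together with the nontriviality of $A$ and $B$ supplies the parity slack one needs. Once these subcases are handled, both contradictions with Theorem~\ref{k-fc} fall out directly.
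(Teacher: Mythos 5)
The paper itself offers no proof of Theorem~\ref{pfc} (it is quoted from Favaron's paper), so I can only judge your argument on its own merits. Your first half ($\kappa(G)\ge p$) is essentially right, but the way to make the ``parity bookkeeping'' work should be stated precisely: you cannot in general force an odd \emph{sub}component of a touched component $C_i$ when the number of its surviving vertices is even (think of $C_i$ complete); what you can and should do is distribute the $p-s$ deletions so that the number of surviving vertices inside some $C_i$ is \emph{odd}, since a union of components of $G-X$ with odd total order must contain an odd component. A short shifting argument on the numbers $r_i$ of deletions per component shows such a distribution always exists, and then $c_0(G-X)\ge 1>|X|-p$ contradicts Theorem~\ref{k-fc}. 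So Step~1 is completable along your lines.

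Step~2 has a genuine gap. You force $X\supseteq X_A$ (all $A$-endpoints of the $p$-edge cut $F$) and then try to tune the parities of $|A\setminus X|$ and $|B\setminus X|$; but exactly in the tight case there is nothing to tune. If every vertex of $A$ meets $F$ and $|A|=p$, the only admissible set is $X=X_A=A$, and then $G-X=G[B]$ has $n-p$ vertices, which is even; anchoring on $X_B$ instead can be equally forced. Concretely, let $G$ consist of two copies of $K_p$ ($p$ even, $n=2p$) joined by a perfect matching $F$: this is consistent with everything assumed at that point ($\lambda(G)=p$, $\kappa(G)=p$, $n\equiv p\pmod 2$), yet $G-A\cong G-B\cong K_p$ is connected of even order, so no choice inside your scheme produces an odd component and no contradiction with Theorem~\ref{k-fc} can be extracted. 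The graph is of course not $p$-factor-critical, but the witness is $X=(A\setminus\{v\})\cup\{u\}$ with $u$ the $F$-partner of $v$, which isolates $v$ --- and this $X$ does \emph{not} contain $X_A$. The missing idea is precisely this: when one side of the cut is saturated by $F$, you must abandon the constraint $X\supseteq X_A$ and instead isolate (or oddify) a vertex of that side, using the fact that some vertex there is incident with at most $\lfloor p/|A|\rfloor$ edges of $F$ while its remaining neighbours lie on its own side. The congruence $n\equiv p\pmod 2$ and the nontriviality of $A$ and $B$ do not supply the ``parity slack'' you invoke; in this configuration that slack simply does not exist, so the edge-connectivity half of your proof is incomplete as written.
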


Let $X$ be a subset of $V(G)$. Denoted by $\mathscr{C}_{G-X}$ the set of the components of $G-X$. $X$ is called to be {\em matchable} to $\mathscr{C}_{G-X}$ if the bipartite graph $G_{X}$, which arises from $G$ by contracting the components in $\mathscr{C}_{G-X}$ to single vertices and deleting all the edges in $E(X)$, contains a matching covering $X$. The following general result will be used.

\begin{thm}[\cite{Diestel}]\label{s} Every graph $G$ contains a set $X$ of vertices
with the following properties:\\
$(1)$ $X$ is matchable to $\mathscr{C}_{G-X}$;\\
$(2)$ Every component of $G-X$ is factor-critical.\\
Given any such set $X$, the graph $G$ contains a perfect matching if and only if $|X|=|\mathscr{C}_{G-X}|$.
\end{thm}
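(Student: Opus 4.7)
The plan is to derive this version of the Gallai--Edmonds structure theorem directly from the Tutte--Berge deficiency formula $\mathrm{def}(G)=\max_{S\subseteq V(G)}(c_{0}(G-S)-|S|)$. I would select $X$ to be a set attaining $\mathrm{def}(G)$ and, subject to that, of maximum possible cardinality; all four conclusions should then fall out by playing the two maximality requirements against each other.

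First I would check that $G-X$ has no component of even order: if $C$ were such a component, then for any $v\in V(C)$ the subgraph $C-v$ has odd order and hence contributes at least one odd component to $G-(X\cup\{v\})$. This exactly compensates the unit increase in $|X|$, so $X\cup\{v\}$ still attains $\mathrm{def}(G)$ but is strictly larger, contradicting the choice of $X$.

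Next I would establish that every component $C$ of $G-X$ is factor-critical, which I expect to be the technical heart of the argument. If some $v\in V(C)$ admitted no perfect matching in $C-v$, Tutte's theorem supplies $S\subseteq V(C-v)$ with $c_{0}((C-v)-S)>|S|$, and a parity check (since $|V(C-v)|$ is even) upgrades this to $c_{0}((C-v)-S)\ge |S|+2$. The set $X':=X\cup\{v\}\cup S$ then satisfies
\[
c_{0}(G-X')-|X'|\ \ge\ \bigl(c_{0}(G-X)-1\bigr)+(|S|+2)-\bigl(|X|+1+|S|\bigr)\ =\ c_{0}(G-X)-|X|,
\]
while $|X'|>|X|$, again contradicting maximality. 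Getting the parity upgrade from a strict inequality to a jump of two is the delicate point.

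I would obtain property (1) via Hall's theorem applied to the bipartite graph $G_{X}$. A failure of Hall's condition on some $Y\subseteq X$ would let me merge $Y$ with the components of $G-X$ it reaches into a single component of $G-(X\setminus Y)$; a brief parity bookkeeping on that merged set (all components of $G-X$ being odd by the previous step) then shows that $X\setminus Y$ strictly improves the deficiency, contradicting the choice of $X$. Hence $X$ is matchable to $\mathscr{C}_{G-X}$. Finally, for the equivalence in the last assertion, a perfect matching of $G$ forces $\mathrm{def}(G)=0$, so $c_{0}(G-X)=|X|$, and since every component of $G-X$ is odd this is exactly $|X|=|\mathscr{C}_{G-X}|$; conversely, given $|X|=|\mathscr{C}_{G-X}|$, one builds a perfect matching of $G$ by combining the matching of $X$ into the components with the perfect matchings provided by factor-criticality after removing from each component the vertex used by that matching.
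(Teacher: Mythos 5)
The paper does not prove Theorem \ref{s} at all: it is quoted from Diestel's book, where it is established by a self-contained extremal argument (choose $S$ so that the quantity $c_{0}(G-S)-|S|$ is maximum and, subject to that, $|S|$ is maximum, then verify factor-criticality of the components and matchability via Hall/K\H{o}nig), and Tutte's theorem and the Berge deficiency formula are then obtained as corollaries. You run the implications in the opposite direction, taking the Tutte--Berge formula, Tutte's theorem and Hall's theorem as inputs. That is legitimate, since those results have standard independent proofs, and it buys a very short argument; interestingly, your extremal choice of $X$ and the way you exploit it (even components would allow $X\cup\{v\}$, non-factor-critical components would allow $X\cup\{v\}\cup S$ with the parity upgrade $c_{0}((C-v)-S)\geq|S|+2$) is essentially the same mechanism as in the cited source, just powered by the deficiency formula instead of being self-contained. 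All of these steps are correct.

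Two places need tightening. First, in the Hall step, $Y$ together with the components of $G-X$ it reaches need not merge into a \emph{single} component of $G-(X\setminus Y)$; this does not matter, because those vertices are separated from the untouched components, which are all odd, so already $|Y|-|N_{G_{X}}(Y)|\geq 1$ gives $c_{0}(G-(X\setminus Y))-|X\setminus Y|\geq c_{0}(G-X)-|X|+1$, the desired contradiction, with no parity refinement needed. Second, the final assertion is quantified over \emph{every} set $X$ satisfying $(1)$ and $(2)$, not only your extremal one, and for such an $X$ the implication ``perfect matching $\Rightarrow c_{0}(G-X)=|X|$'' does not follow from $\mathrm{def}(G)=0$ alone, which only yields $c_{0}(G-X)\leq|X|$; the reverse inequality $|X|\leq|\mathscr{C}_{G-X}|=c_{0}(G-X)$ must be extracted from property $(1)$ (each vertex of $X$ is matched into a distinct component) together with the oddness of all components guaranteed by $(2)$. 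With these one-line patches your proof is complete.
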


The {\em girth} of a graph $G$ with a cycle is the length of a shortest cycle in $G$ and the {\em odd girth} of a non-bipartite graph $G$ is the length of a shortest odd cycle in $G$. The girth and odd girth of $G$ are denoted by $g(G)$ and $g_{0}(G)$, respectively. $l$-cycle means a cycle of length $l$. We present two useful lemmas as follows.

\begin{Lemma}[\cite{Mantel}]\label{triangle} Let $G$ be a graph with $g_{0}(G)>3$. Then
$|E(G)|\leq\frac{1}{4}|V(G)|^2$.
\end{Lemma}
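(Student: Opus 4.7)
The statement is Mantel's classical theorem: since $g_{0}(G)>3$ means $G$ contains no triangle (no odd cycle of length $3$), the claim is that any triangle-free graph on $n=|V(G)|$ vertices has at most $n^{2}/4$ edges. I would give the standard degree-sum proof, which is short and self-contained, so it fits the paper's need for a one-line citation-style lemma.

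The plan is as follows. First I would observe that for every edge $uv\in E(G)$, triangle-freeness forces $N(u)\cap N(v)=\emptyset$, so every vertex of $G$ is adjacent to at most one of $u,v$. Hence $d(u)+d(v)\le n$ for each edge $uv$. Summing this inequality over all edges of $G$ yields
\[
\sum_{uv\in E(G)}\bigl(d(u)+d(v)\bigr)\le n\,|E(G)|.
\]
Next I would rewrite the left-hand side by counting the contribution of each vertex $v$: the term $d(v)$ appears once for each edge incident to $v$, so the left-hand side equals $\sum_{v\in V(G)} d(v)^{2}$.

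Now I would apply the Cauchy--Schwarz inequality (or the power-mean inequality) together with the handshake identity $\sum_{v}d(v)=2|E(G)|$ to obtain
\[
\sum_{v\in V(G)} d(v)^{2}\ \ge\ \frac{\bigl(\sum_{v}d(v)\bigr)^{2}}{n}\ =\ \frac{4|E(G)|^{2}}{n}.
\]
Combining this with the previous bound gives $4|E(G)|^{2}/n\le n\,|E(G)|$, which rearranges to $|E(G)|\le n^{2}/4$, as desired.

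There is no real obstacle here; the only thing to be careful about is the elementary step $d(u)+d(v)\le n$, which uses triangle-freeness exactly once and crucially requires that $u,v$ have no common neighbour (so no vertex is double-counted among $N(u)\cup N(v)$, and $u,v$ themselves account for the remaining slack). An alternative route would be a short induction on $n$ (remove an edge $uv$ and the at most $n-2$ edges incident to $u\cup v$), but the degree-sum argument is cleaner and is what I would include.
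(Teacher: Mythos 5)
Your proof is correct: the degree-sum bound $d(u)+d(v)\le |V(G)|$ for each edge (valid since $g_{0}(G)>3$ forbids triangles, so $N(u)\cap N(v)=\emptyset$), combined with $\sum_{uv\in E(G)}(d(u)+d(v))=\sum_{v}d(v)^{2}$ and Cauchy--Schwarz, is the standard argument for Mantel's theorem. The paper gives no proof of this lemma at all --- it is quoted as a classical result with a citation to Mantel --- so your write-up simply supplies the omitted standard proof, and it does so without any gaps.
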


\begin{Lemma}[\cite{Andrasfai}]\label{order-girth} Let $G$ be a $k$-regular graph. If $g_{0}(G)\geq5$, then $|V(G)|\geq kg_{0}(G)/2$.
\end{Lemma}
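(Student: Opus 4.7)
\medskip
\noindent\textbf{Proof proposal.} The plan is to fix a shortest odd cycle $C$ of length $g_{0}(G) = 2m+1 \geq 5$ and bound $|V(G)|$ by double-counting the edges in the cut $\nabla(V(C))$. If I can show that each $v_{i}\in V(C)$ has exactly two neighbors in $V(C)$ (so that $d_{G}(V(C)) = (2m+1)(k-2)$), and that each $v \in V(G)\setminus V(C)$ has at most two neighbors in $V(C)$ (so that $d_{G}(V(C)) \leq 2(|V(G)|-(2m+1))$), then combining the two estimates will immediately yield $|V(G)| \geq k(2m+1)/2 = kg_{0}(G)/2$.

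\medskip
The first step is to verify that $C$ is chordless. A chord $v_{i}v_{j}$ at cycle-distance $t$ with $2\leq t\leq m$ would split $C$ into two cycles of lengths $t+1$ and $2m+2-t$; their sum is odd, so their parities disagree, and a short case check will show that the odd one is strictly shorter than $2m+1$ whenever $m\geq 2$, contradicting the minimality of $C$. Next I would run an analogous parity argument for common neighbors: if $w$ is a common neighbor of $v_{i}, v_{j}$ (on or off $C$) with $d_{C}(v_{i},v_{j})=t$, then closing the path $v_{i}wv_{j}$ with either arc of $C$ produces cycles of lengths $t+2$ and $2m+3-t$ of opposite parity, and forcing the odd one to have length at least $2m+1$ (while also ruling out $t=1$ via the triangle it would create) pins down $t=2$.

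\medskip
To close the argument, I would suppose for contradiction that some $v \in V(G)\setminus V(C)$ has three distinct neighbors $v_{i_{1}}, v_{i_{2}}, v_{i_{3}}$ in $V(C)$. By the common-neighbor step, each pair lies at cycle-distance exactly $2$, so after relabelling I may take $v_{i_{2}} = v_{i_{1}+2}$ and $v_{i_{3}} = v_{i_{1}-2}$ with indices mod $2m+1$; but then $d_{C}(v_{i_{2}},v_{i_{3}}) = \min(4, 2m-3)$, which equals $1$, $3$, or $4$ for $m=2$, $m=3$, or $m\geq 4$ respectively, and in particular is never $2$. Hence $|N(v)\cap V(C)|\leq 2$, and the double count gives the desired bound. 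I expect the main obstacle to be organizing the three parity arguments (chord-lemma, common-neighbor lemma, and three-neighbors lemma) as instances of one clean ``split $C$ by a short transversal and chase parities modulo $2m+1$'' template; once that template is stated carefully, the inequalities fall out immediately and the final counting step is one line.
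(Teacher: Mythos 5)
Your argument is correct. Note, however, that the paper does not prove this lemma at all: it is quoted from Andr\'{a}sfai--Erd\H{o}s--S\'{o}s with only a citation, so there is no internal proof to compare yours against. What you give is a valid self-contained and elementary proof: taking a shortest odd cycle $C$ of length $g_{0}=2m+1\geq5$, the parity argument correctly shows $C$ is chordless (a chord splits $C$ into cycles of lengths $t+1$ and $2m+2-t$ summing to the odd number $2m+3$, and the odd one is shorter than $2m+1$), and the same template correctly forces any two $C$-neighbours of an outside vertex to lie at cycle-distance exactly $2$ (for odd $t$ the odd cycle has length $t+2\leq m+2<2m+1$, so in fact your special handling of $t=1$ is subsumed by the general parity case; for even $t$ one needs $2m+3-t\geq2m+1$, i.e.\ $t=2$). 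Your three-neighbour step is also sound, since the two vertices at distance $2$ from a fixed vertex of $C$ are $v_{i_{1}\pm2}$, which are distinct because $2m+1\geq5$, and their mutual distance $\min(4,2m-3)$ is never $2$. The final double count $(2m+1)(k-2)=d_{G}(V(C))\leq 2\bigl(|V(G)|-(2m+1)\bigr)$ then gives $|V(G)|\geq k(2m+1)/2$ exactly as claimed, and the degenerate case $V(G)=V(C)$ (i.e.\ $k=2$) is handled automatically since both sides of the cut count are zero.
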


Now we list some useful properties of vertex-transitive graphs as follows.

\begin{thm}[\cite{Mader}]\label{econnectivity} Let $G$ be a connected vertex-transitive $k$-regular graph. Then $\lambda(G)=k$.
\end{thm}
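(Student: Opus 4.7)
The inequality $\lambda(G) \leq \delta(G) = k$ is standard, so the task is to show $\lambda(G) \geq k$. The plan is to argue by contradiction: suppose $\lambda(G) < k$ and pick a minimum edge-cut $\nabla(A)$ with $|A| \leq n/2$ and $|A|$ as small as possible; write $m = |A|$. The degree identity $km = 2|E(A)| + d_G(A) = 2|E(A)| + \lambda(G)$ together with $\lambda(G) < k$ immediately forces $m \geq 2$.

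The central step is to show that $A$ is a block of imprimitivity of $\mathrm{Aut}(G)$. For any $\varphi \in \mathrm{Aut}(G)$ the set $\varphi(A)$ is again a minimum-size side of a minimum edge-cut. I would apply the submodular identity
\[
d_G(X) + d_G(Y) = d_G(X \cap Y) + d_G(X \cup Y) + 2\,|[X \setminus Y,\, Y \setminus X]|
\]
to $X = A$, $Y = \varphi(A)$. If $A \cap \varphi(A) \neq \emptyset$ and $A \cup \varphi(A) \neq V(G)$, then both the intersection and the union are valid cut-sides, so each has $d_G$-value at least $\lambda(G)$; the identity then forces $d_G(A \cap \varphi(A)) = \lambda(G)$, but $|A \cap \varphi(A)| < m$ would contradict the minimality of $m$. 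The corner case $A \cup \varphi(A) = V(G)$ forces $|A| = n/2$ and $\varphi(A) = \overline{A}$. In all cases distinct images of $A$ are disjoint, and since $\mathrm{Aut}(G)$ is transitive on $V(G)$ these images cover $V(G)$ and partition it into $n/m$ blocks of size $m$.

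A direct consequence is that the stabilizer of $A$ in $\mathrm{Aut}(G)$ acts transitively on $A$, so $G[A]$ is vertex-transitive and every vertex of $A$ has the same number $t$ of neighbors outside $A$. Hence $\lambda(G) = tm$ and $G[A]$ is $r$-regular with $r = k - t$. Connectedness of $G$ gives $t \geq 1$; moreover $G[A]$ must itself be connected, for any component $C \subsetneq A$ would produce an edge-cut $\nabla(C)$ of size $t|C| < tm = \lambda(G)$. Therefore $r \geq 1$, i.e., $t \leq k-1$.

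The contradiction then follows by a two-line arithmetic: from $\lambda(G) = tm < k$ with $t \geq 1$ one obtains $m < k$, while $r = k - t > k(m-1)/m$ combined with the elementary bound $r \leq m - 1$ yields $k(m-1)/m < m-1$, hence $m > k$. The two inequalities are incompatible, so $\lambda(G) \geq k$. The main obstacle is the block-of-imprimitivity step: the submodular identity and the corner case $A \cup \varphi(A) = V(G)$ (which pins $|A| = n/2$) must be handled with care; once the partition structure is in place the rest is a clean counting contradiction.
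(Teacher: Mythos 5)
Your argument is correct: the minimality of the atom $A$ together with the submodular identity does force distinct images of $A$ to be disjoint (the corner case $A\cup\varphi(A)=V(G)$ with $A\cap\varphi(A)\neq\emptyset$ in fact cannot occur when $|A|\leq n/2$), the stabilizer argument gives that $G[A]$ is vertex-transitive hence regular with constant outer degree $t$, and the final counting $m\leq tm=\lambda(G)<k$ versus $k(m-1)/m<r\leq m-1$ (so $m>k$) is a genuine contradiction. Note that the paper does not prove Theorem \ref{econnectivity} at all -- it is quoted from Mader's 1971 paper -- and your proof is essentially the classical atom/imprimitivity argument for that result, the same circle of ideas the paper itself invokes elsewhere via imprimitive blocks (Theorem \ref{small}, Lemma \ref{block}).
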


\begin{thm}[\cite{Watkins}]\label{vtc} Let $G$ be a connected vertex-transitive $k$-regular graph. Then $\kappa(G)>\frac{2}{3}k$.
\end{thm}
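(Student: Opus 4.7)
The plan is to use the atom (or fragment) technique for vertex cuts, combined with an averaging argument that exploits vertex-transitivity to promote local edge-counts to a global contradiction. Pure regularity only gives $\kappa \le \lambda \le k$, so the extra $\tfrac{2}{3}$-factor must come from the automorphism group acting transitively on the fragments of a minimum cut.

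Suppose for contradiction that $\kappa(G) \le \tfrac{2}{3}k$. Let $T$ be a minimum vertex cut, so $|T| = \kappa$, and write $G - T = A \cup B \cup \cdots$ as its decomposition into components; pick $A$ so that $a := |A|$ is smallest possible (over all choices of $T$ and all components of $G - T$). Put $B := V(G) \setminus (A \cup T)$. First I would prove the basic local inequalities. By the minimality of $|T|$, every vertex of $T$ has at least one neighbor in $A$ and at least one neighbor in $B$ (otherwise $T$ minus that vertex would still separate the graph). Thus for each $v \in A$ we have $|N(v) \cap T| \ge k - (a-1)$, and summing over $v \in A$ and then bounding by $|T|\cdot a$ on the $T$-side gives
\[
a(k - a + 1) \;\le\; \bigl|[A,T]\bigr| \;\le\; a\kappa,
\]
so $a \ge k - \kappa + 1 \ge \tfrac{1}{3}k + 1$.

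Next I would show that $A$ behaves like an imprimitive block. The key sub-lemma, standard in fragment theory, is that any two fragments of minimum size are either equal or vertex-disjoint; this follows by a submodular argument: if two atoms $A, A'$ met in a proper subset, one could construct a minimum cut whose smaller side is strictly contained in $A$, contradicting minimality. Vertex-transitivity then guarantees that the $\mathrm{Aut}(G)$-orbit of $A$ covers $V(G)$, so the orbit gives an imprimitive block system partitioning $V(G)$ into $n/a$ copies of $G[A]$. In particular $a \mid n$ and every block has the same edge-count to the rest of the graph.

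Finally I would derive the contradiction by a global edge-count. The block containing a vertex $u \in T$ must be one of these orbit-blocks $A^{*}$, and because $A^{*}$ meets $T$ its position relative to the cut $(A, T, B)$ forces $|A^{*} \cap T| + |A^{*} \cap B| \le |T|$-type constraints, which, when combined with $a \ge k - \kappa + 1$ from the first step, push $\kappa$ strictly above $\tfrac{2}{3}k$. The main obstacle is unambiguously the disjointness-of-atoms lemma and the careful merger of local bounds with the orbit structure; once the imprimitive block picture is in place, the final inequality is just arithmetic. A secondary delicacy is the case $a = 1$, where $G[A]$ is a single vertex and the argument collapses to $\kappa = k$, which is already consistent with (and stronger than) the claimed bound.
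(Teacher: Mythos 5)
Your framework (atoms of minimum vertex cuts, the bound $a\ge k-\kappa+1$, disjointness of atoms, and the resulting imprimitive block system) is indeed the Watkins-style approach; the paper itself only cites this theorem, so that is the right template. But the final step of your outline, which you dismiss as "just arithmetic," is where the whole theorem lives, and as written it has two genuine gaps. First, the lemma you actually need is not "distinct atoms are pairwise disjoint or equal" but the stronger fragment lemma: an atom that intersects an \emph{arbitrary} minimum vertex cut $T$ is wholly contained in $T$. Only this makes $T$ a disjoint union of atoms; a priori the atom $A^{*}$ containing a vertex $u\in T$ could straddle $T$ and $A$, and your phrase "$|A^{*}\cap T|+|A^{*}\cap B|\le|T|$-type constraints" neither states nor proves the containment (its proof is a separate submodular/fragment-intersection argument, not a consequence of atom disjointness). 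Second, even granting $A^{*}\subseteq T$, you only obtain $a\le\kappa$, and together with $a\ge k-\kappa+1$ this yields $\kappa\ge\tfrac{1}{2}(k+1)$, which is strictly weaker than $\tfrac{2}{3}k$ for $k\ge4$; under your contradiction hypothesis $\kappa\le\tfrac23 k$ the inequalities $a\ge\tfrac13k+1$ and $a\le\kappa$ are perfectly compatible, so no contradiction appears.

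What is missing is the fact (the content of the paper's Lemma 2.8, $\kappa=m\tau(G)$ with $m\ge2$) that the minimum cut contains at least \emph{two} atoms, i.e.\ $\kappa\ge 2a$. Ruling out the case that $T$ consists of a single atom requires its own argument: if $T=A^{*}$ were one atom, then since every vertex of $T$ has neighbours in both $A$ and $B$, the minimum cut $N(A^{*})$ meets both $A$ and $B$; any vertex of $A\setminus N(A^{*})$ would have all $k$ of its neighbours inside $A$, forcing $k\le a-1\le\kappa-1$, which is absurd, hence $A\subseteq N(A^{*})$ and then $|N(A^{*})|>|A|=\kappa$, a contradiction. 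With $\kappa\ge 2a$ in hand the arithmetic does finish: $k\le (a-1)+\kappa\le\tfrac32\kappa-1$, so $\kappa\ge\tfrac23(k+1)>\tfrac23 k$. As it stands, your proposal assembles the correct scaffolding but omits both the atom-in-cut containment lemma and the $m\ge2$ step, and without them the claimed bound does not follow.
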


\begin{Lemma}[\cite{Watkins}]\label{vtc2} Let $G$ be a connected vertex-transitive $k$-regular graph. If $\kappa(G)<k$, then $\kappa(G)=m\tau(G)$ for
some integer $m\geq2$, where
\begin{center}
$\tau(G)$ $=$ min\{min\{$|V(P)|:$ $P$ is a component of $G-X$\}: $X$ is
a minimum vertex-cut of $G$\}.
\end{center}
\end{Lemma}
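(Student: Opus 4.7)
The plan is to work with the standard notion of a $\kappa$-atom of $G$: a component of $G-X$ of minimum order as $X$ ranges over all minimum vertex-cuts of $G$. By definition, every atom has order exactly $\tau(G)$, and since $\kappa(G)<k$ each minimum vertex-cut disconnects $G$ into at least two non-empty parts, so atoms exist. The conclusion will follow from three assertions: (i) any two distinct atoms are vertex-disjoint; (ii) the atoms partition $V(G)$; and (iii) every minimum vertex-cut is a disjoint union of at least two atoms.

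For (i), let $A_1,A_2$ be atoms with associated minimum vertex-cuts $X_1,X_2$, and set $B_i=V(G)\setminus(A_i\cup X_i)$. Supposing $A_1\neq A_2$ but $A_1\cap A_2\neq\emptyset$, I would run a standard cut-surgery argument: the set $S_{\cap}=(X_1\cap A_2)\cup(X_1\cap X_2)\cup(A_1\cap X_2)$ separates $A_1\cap A_2$ from $B_1\cup B_2$, while a dual set $S_{\cup}$ (obtained by swapping $A_i$ with $B_i$) separates the corresponding ``union side'' from $B_1\cap B_2$. A direct double count gives $|S_{\cap}|+|S_{\cup}|\leq|X_1|+|X_2|=2\kappa(G)$; since each of $S_{\cap},S_{\cup}$ is itself a vertex-cut of $G$, each has at least $\kappa(G)$ vertices, so equality holds throughout. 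Minimality of $\tau(G)$ then forces $|A_1\cap A_2|\geq\tau(G)=|A_1|$, contradicting $A_1\cap A_2\subsetneq A_1$.

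For (ii), vertex-transitivity carries atoms to atoms, so given any fixed atom $A_0$ and any $v\in V(G)$, an automorphism of $G$ sending some vertex of $A_0$ to $v$ produces an atom through $v$; combined with (i), the atoms partition $V(G)$ into a non-trivial system of imprimitivity blocks for $\mathrm{Aut}(G)$. For (iii), fix a minimum vertex-cut $X$ with an atom $A$ as a component of $G-X$. Every $v\in X$ belongs to a unique atom $A_v\neq A$. If $A_v\not\subseteq X$, then $A_v$ meets some other component $C$ of $G-X$; choosing an atom $A'\subseteq C$ via vertex-transitivity, the set $A_v\cap A'$ would be a proper non-empty intersection of distinct atoms, contradicting (i). Hence $X$ is a disjoint union of atoms, and $\kappa(G)=|X|=m\tau(G)$. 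Finally, $m\geq 2$ because $m=1$ would make $X$ itself a single atom of size $\tau(G)$; applying an automorphism sending $A$ to $X$ propagates a configuration in which each atom's entire external neighborhood is a single other atom, and an edge-count in the $k$-regular graph $G$ then forces $\kappa(G)=k$, contradicting the hypothesis.

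The main obstacle will be step (i): the cut-surgery comparison is technically delicate and demands careful bookkeeping of which region lies on which side of the two cuts, together with the minimum-cut hypothesis to force all intermediate inequalities to be equalities. Once disjointness of atoms is in hand, steps (ii) and (iii) follow rather cleanly from vertex-transitivity and a short edge-count.
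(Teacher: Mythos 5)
The paper itself offers no proof of this lemma --- it is quoted directly from Watkins \cite{Watkins} --- so your attempt can only be measured against the classical atom argument, which is indeed the strategy you chose: atoms (minimum-order components over minimum cuts) are pairwise disjoint, they form a system of imprimitivity blocks, every minimum vertex-cut is a disjoint union of atoms, and such a cut cannot be a single atom. That skeleton is correct, but two of your steps have genuine gaps. The serious one is step (iii). Disjointness of atoms plus vertex-transitivity does \emph{not} yield that the atom $A_v$ through a vertex $v$ of the cut $X$ lies inside $X$: your argument needs an atom $A'\subseteq C$ that moreover meets $A_v$, while vertex-transitivity only produces an atom through a chosen vertex of $C$, and that atom may itself protrude into $X$ --- exactly the configuration you are trying to exclude --- and even an atom wholly contained in $C$ has no reason to intersect $A_v$. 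What is actually needed is the stronger fragment--atom lemma of Mader/Watkins (if $A$ is an atom and $F$ a fragment, then $A\subseteq F$, $A\subseteq N(F)$, or $A\cap(F\cup N(F))=\emptyset$), applied with $F$ a component of $G-X$, so that $N(F)=X$ forces $A_v\subseteq X$; this is a second counting argument of the same kind as (i), not a formal consequence of (i). Without it the identity $\kappa(G)=m\tau(G)$ does not follow.

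Two further points. In step (i), your double count requires $S_{\cup}$ to be a vertex-cut, i.e.\ $B_1\cap B_2\neq\emptyset$; this is not automatic (each $|B_i|\geq\tau(G)$ does not force the two ``big sides'' to overlap), and the classical proof disposes of this by a case analysis that plays the atom $A_1$ against \emph{each} of the three parts $A_2$, $X_2$, $B_2$, using $|A_1|\leq |A_2|,|B_2|$. Finally, your justification of $m\geq2$ (``an edge-count in the $k$-regular graph then forces $\kappa(G)=k$'') does not work as stated: regularity gives no such count, since a $k$-regular vertex-transitive graph on $2\kappa$ vertices with $\kappa<k$ is not numerically absurd. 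The correct contradiction when a minimum cut $X$ is a single atom is structural: then $\kappa(G)=\tau(G)$, so by (iii) the neighbourhood $N(A)$ of any atom $A$ is again a single atom; pairing $A$ with $N(A)$ and using connectivity gives $V(G)=A\cup N(A)$, and then $G-X$ is the subgraph induced by a single atom, which is connected, contradicting that $X$ is a cut. So the architecture is the right Watkins-style one, but (iii), the degenerate case in (i), and the $m\geq2$ step all need the genuine lemmas rather than the shortcuts you used.
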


\begin{Lemma}[\cite{Watkins}]\label{connectivity} Let $G$ be a connected
vertex-transitive $k$-regular graph with $k=4$ or $6$. Then $\kappa(G)=k$.
\end{Lemma}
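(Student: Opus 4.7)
The plan is to pin down $\kappa(G)$ by combining the three preceding structural results on vertex-transitive graphs. First, Theorem \ref{econnectivity} gives $\lambda(G)=k$, so $\kappa(G)\leq\lambda(G)=k$, while Theorem \ref{vtc} gives $\kappa(G)>2k/3$. For $k=4$ this leaves $\kappa(G)\in\{3,4\}$, and for $k=6$ it leaves $\kappa(G)\in\{5,6\}$.

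Next, I would argue by contradiction, supposing $\kappa(G)<k$; then $\kappa(G)=3$ when $k=4$, and $\kappa(G)=5$ when $k=6$. Lemma \ref{vtc2} then applies and yields $\kappa(G)=m\tau(G)$ for some integer $m\geq 2$. Since $3$ and $5$ are both prime, the only factorization consistent with $m\geq 2$ is $m=\kappa(G)$ and $\tau(G)=1$.

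Finally, I would unpack $\tau(G)=1$: by its definition, some minimum vertex-cut $X$ of $G$ leaves a singleton component $\{v\}$ of $G-X$, whence $N_G(v)\subseteq X$ and $k=d_G(v)\leq|X|=\kappa(G)<k$, a contradiction. Hence $\kappa(G)=k$ in both cases.

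I expect no serious obstacle here, since the whole argument is essentially a sandwich estimate followed by a divisibility check: the integer interval $(2k/3,k)$ contains exactly one value when $k\in\{4,6\}$, and that value is prime, so Lemma \ref{vtc2} is forced into the degenerate case $\tau(G)=1$ which conflicts with $k$-regularity. The only point requiring care is confirming that primality of $\kappa(G)$ together with $m\geq 2$ really excludes every factorization except $\tau(G)=1$; the reason the lemma is stated only for $k\in\{4,6\}$ is precisely that this primality trick breaks down for larger $k$ (for instance $k=10$ admits the possibility $\kappa(G)=9=3\cdot 3$ with $\tau(G)=3$).
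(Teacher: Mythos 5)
Your argument is correct. Note, though, that the paper itself contains no proof of Lemma \ref{connectivity}: it is quoted directly from Watkins, so there is no in-paper argument to compare against. Your reconstruction from the neighbouring quoted results is exactly the standard derivation: $\kappa(G)\leq\delta(G)=k$ (Mader's $\lambda(G)=k$ is not even needed here), $\kappa(G)>\frac{2}{3}k$ by Theorem \ref{vtc} pins $\kappa(G)$ to $\{3,4\}$ or $\{5,6\}$, and if $\kappa(G)<k$ then Lemma \ref{vtc2} gives $\kappa(G)=m\tau(G)$ with $m\geq2$, which by primality of $3$ and $5$ forces $\tau(G)=1$; a singleton component of $G-X$ for a minimum cut $X$ would give $k=d_G(v)\leq|X|=\kappa(G)<k$, a contradiction. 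All steps are sound, including the closing observation about why the primality trick is special to $k\in\{4,6\}$.
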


An {\em imprimitive block} of $G$ is a proper non-empty subset $X$ of $V(G)$ such that for any automorphism $\varphi$ of $G$, either $\varphi(X)=X$ or $\varphi(X)\cap X=\emptyset$.

\begin{Lemma}[\cite{R. Tindell}]\label{block} Let $G$ be a vertex-transitive graph and $X$ be an imprimitive block of $G$. Then $G[X]$ is also vertex-transitive.
\end{Lemma}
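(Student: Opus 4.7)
The plan is to show that for any two vertices $u,v \in X$, there is an automorphism of $G[X]$ sending $u$ to $v$, and to obtain such an automorphism by restricting a suitable automorphism of the ambient graph $G$. Concretely, I would start with an arbitrary ordered pair $u,v \in X$ and invoke the vertex-transitivity of $G$ to produce an automorphism $\varphi \in \mathrm{Aut}(G)$ with $\varphi(u)=v$. The task then reduces to arguing that $\varphi$ carries $X$ to itself, so that its restriction $\varphi|_X$ is a well-defined permutation of $V(G[X])$.

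This is exactly where the hypothesis that $X$ is an imprimitive block is used, and it is essentially the only nontrivial step. Since $u \in X$, we have $v=\varphi(u)\in \varphi(X)$; but also $v\in X$ by choice, so $v\in \varphi(X)\cap X$. In particular $\varphi(X)\cap X\neq\emptyset$, and the block property forces $\varphi(X)=X$. Hence $\varphi|_X$ is a bijection from $X$ to $X$, and since $\varphi$ is a graph automorphism of $G$, it preserves adjacency on $X$, so $\varphi|_X$ is an automorphism of $G[X]$ sending $u$ to $v$. Because $u,v$ were arbitrary, $G[X]$ is vertex-transitive. There is no real obstacle to overcome: the argument is a one-line application of the definition of imprimitive block, used to promote a global automorphism of $G$ to a local automorphism of the induced subgraph $G[X]$.
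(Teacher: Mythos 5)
Your proof is correct: the block property applied to an automorphism $\varphi$ of $G$ with $\varphi(u)=v$ (where $u,v\in X$, so $\varphi(X)\cap X\neq\emptyset$) forces $\varphi(X)=X$, and the restriction $\varphi|_X$ is then an automorphism of $G[X]$ carrying $u$ to $v$. The paper itself gives no proof of this lemma, citing it to Tindell, and your argument is exactly the standard one-line verification behind that citation.
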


\begin{thm}[\cite{Heuvel}]\label{small} Let $G$ be a connected vertex-transitive $k$-regular graph of order $n$.
Let $S$ be a subset of $V(G)$ chosen such that
$\frac{1}{3}(k+1)\leq|S|\leq\frac{1}{2}n$, $d(S)$ is as small as
possible, and, subject to these conditions, $|S|$ is as small as
possible. If $d(S)<\frac{2}{9}(k+1)^2$, then $S$ is an imprimitive
block of $G$.
\end{thm}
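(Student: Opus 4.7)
The plan is to argue by contradiction. Suppose $S$ is not an imprimitive block of $G$; then some automorphism $\varphi$ of $G$ satisfies $S' := \varphi(S) \neq S$ and $S \cap S' \neq \emptyset$. Since $\varphi$ preserves $\nabla(\cdot)$, we have $d(S') = d(S)$. Set $A := S \cap S'$ and $B := S \cup S'$, so $|A| + |B| = 2|S|$ and $A \subsetneq S \subsetneq B$. The standard submodularity of the cut function yields
\[
d(A) + d(B) \leq d(S) + d(S') = 2\,d(S).
\]

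The heart of the argument is to compare $A$ and $B$ (or $\overline{B}$) with the optimal set $S$. When $|A| \geq \frac{1}{3}(k+1)$, the set $A$ itself meets the size restriction used to pick $S$, so the minimality of $d(S)$ forces $d(A) \geq d(S)$. An analogous argument applied to $B$ when $|B| \leq n/2$, or to $\overline{B}$ when $|B| > n/2$ (observing that $|\overline{B}| = n - 2|S| + |A| \geq \frac{1}{3}(k+1)$ follows from $|A| \geq \frac{1}{3}(k+1)$ and $|S| \leq n/2$), gives $d(B) \geq d(S)$. The submodular inequality then collapses to an equality, so $d(A) = d(S)$; but $|A| < |S|$, contradicting the secondary minimality in the choice of $S$.

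The residual case $|A| < \frac{1}{3}(k+1)$ is where I expect the real work. Here I would combine the counting inequality
\[
d(A) \geq |A|\bigl(k + 1 - |A|\bigr),
\]
obtained from $d(A) = k|A| - 2|E(A)|$ together with $|E(A)| \leq \binom{|A|}{2}$, with the observation that $|B| = 2|S| - |A| > \frac{1}{3}(k+1)$ so that $B$ or $\overline{B}$ still serves as a legitimate candidate for the minimisation, giving $d(B) \geq d(S)$. The submodular bound then forces $d(A) \leq d(S) < \frac{2}{9}(k+1)^2$. On the range $1 \leq |A| < \frac{1}{3}(k+1)$ the function $|A|(k+1-|A|)$ is increasing and approaches $\frac{2}{9}(k+1)^2$ at the right endpoint, which gives an immediate numerical contradiction as soon as $|A|$ is not very small.

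The main obstacle is therefore to rule out genuinely small values $|A| = 1, 2, \ldots$, where the elementary lower bound on $d(A)$ is too weak by itself to contradict $d(S) < \frac{2}{9}(k+1)^2$. To handle this I would refine the choice of $\varphi$: among all automorphisms witnessing the failure of imprimitivity, choose one that maximises $|A|$, and then argue that any further translate of $S$ meeting $S$ in a strictly smaller nonempty set would produce, via submodularity applied to that translate, a set of the same cut value and smaller size than $S$, contradicting either the primary minimality of $d(S)$ or the secondary minimality of $|S|$ in conjunction with the $\frac{2}{9}(k+1)^2$ hypothesis. Once this last step is pinned down, the imprimitivity of $S$ follows.
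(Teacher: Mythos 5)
Your first case, $|S\cap\varphi(S)|\ge\frac13(k+1)$, is correct as written. The gaps are in the complementary case. First, your claim that $B=S\cup\varphi(S)$ (or its complement) "still serves as a legitimate candidate" is unjustified there: when $|B|>n/2$ the complement has size $n-2|S|+|A|\ge|A|$, and since now $|A|<\frac13(k+1)$ this can fall below the admissible range (for instance, if $|S|=n/2$ then $|\overline B|=|A|$ exactly), so minimality of $d(S)$ gives no bound on $d(B)$. Second, even granting $d(A)\le d(S)$, the counting bound $d(A)\ge|A|(k+1-|A|)$ only yields a contradiction when $|A|(k+1-|A|)>d(S)$; since $x(k+1-x)<\frac29(k+1)^2$ for every $x<\frac13(k+1)$ and $d(S)$ may be arbitrarily close to $\frac29(k+1)^2$, the "immediate numerical contradiction" can fail for \emph{every} value of $|A|$ in this case, not merely for $|A|=1,2$. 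Your proposed repair (choosing $\varphi$ to maximise $|A|$) is not an argument: you never show that a maximal nonempty intersection forces equality anywhere or produces a smaller admissible competitor, so the case of small intersection remains entirely open.

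The missing idea is to apply the hypothesis to $S$ itself and then to switch from the intersection/union pair to the difference pair. From $d(S)=k|S|-2|E(S)|\ge|S|(k+1-|S|)$ and the fact that $x(k+1-x)\ge\frac29(k+1)^2$ for $\frac13(k+1)\le x\le\frac23(k+1)$, the hypothesis $d(S)<\frac29(k+1)^2$ forces $|S|>\frac23(k+1)$. Hence if $|S\cap\varphi(S)|<\frac13(k+1)$, then $|S\setminus\varphi(S)|=|\varphi(S)\setminus S|>\frac13(k+1)$, and both difference sets have size less than $|S|\le n/2$, so both are admissible competitors and satisfy $d(\cdot)\ge d(S)$. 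The companion (posimodular) inequality $d(S\setminus\varphi(S))+d(\varphi(S)\setminus S)\le d(S)+d(\varphi(S))=2d(S)$ then forces $d(S\setminus\varphi(S))=d(S)$ with $|S\setminus\varphi(S)|<|S|$, contradicting the secondary minimality of $|S|$ exactly as in your first case. With this replacement of your second case the proof closes; note, for what it is worth, that the paper itself does not prove this theorem but quotes it from \cite{Heuvel}, where the argument is essentially the one just sketched.
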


\begin{cor}[\cite{Heuvel}]\label{small2} Let $G$ be a connected vertex-transitive $k$-regular graph of order $n$. Let $S$ be a subset of $V(G)$ chosen such that $1<|S|\leq\frac{1}{2}n$, $d_{G}(S)$ is as small as possible, and,
subject to these conditions, $|S|$ is as small as possible. If
$d_{G}(S)<2(k-1)$, then $d_{G}(S)=|S|\geq k$ and $d_{G[S]}(v)=k-1$
for all $v\in S$.
\end{cor}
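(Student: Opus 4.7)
The plan is to reduce Corollary~\ref{small2} to Theorem~\ref{small}: I will show that the chosen $S$ automatically satisfies the size hypothesis $|S|\geq(k+1)/3$ and the cut hypothesis $d_{G}(S)<\tfrac{2}{9}(k+1)^{2}$ of Theorem~\ref{small}, so that $S$ must be an imprimitive block. Then Lemma~\ref{block} will give that $G[S]$ is vertex-transitive, forcing the external degree $k-d_{G[S]}(v)$ to take a common value $e$, and a short arithmetic will pinch this value down to $e=1$.

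First I would rule out $|S|=2$: regardless of adjacency, $d_{G}(S)\geq 2(k-1)$, contradicting the hypothesis, so $|S|\geq 3$. Next, for any $v\in S$, letting $e_{v}$ denote the number of edges from $v$ to $\overline{S}$, the identity $d_{G}(S\setminus\{v\})=d_{G}(S)+k-2e_{v}$ paired with the strict $|S|$-minimality of $S$ (available since $|S\setminus\{v\}|\geq 2$) gives $d_{G}(S\setminus\{v\})>d_{G}(S)$, hence $e_{v}<k/2$ and $d_{G[S]}(v)>k/2$. From $|S|-1\geq d_{G[S]}(v)$ I then get $|S|>k/2+1>(k+1)/3$. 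To verify $d_{G}(S)<\tfrac{2}{9}(k+1)^{2}$ I split on $k$: for $k\geq 5$ the algebraic inequality $2(k-1)\leq\tfrac{2}{9}(k+1)^{2}$ (equivalent to $(k-2)(k-5)\geq 0$) does the job, while for $k\in\{3,4\}$ I would use $\lambda(G)=k$ from Theorem~\ref{econnectivity} to confine $d_{G}(S)$ to $\{3\}$ or $\{4,5\}$, each of which lies strictly below $\tfrac{2}{9}(k+1)^{2}$.

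Theorem~\ref{small} now yields that $S$ is an imprimitive block. Lemma~\ref{block} then makes $G[S]$ vertex-transitive, hence $r$-regular for some $r$, so all $e_{v}$ equal the common value $e:=k-r$ and $d_{G}(S)=e|S|$. To conclude I need $e=1$: the case $e=0$ is killed by $d_{G}(S)\geq k>0$; for $e\geq 2$, combining $|S|\geq k-e+1$ (from $d_{G[S]}(v)=k-e\leq|S|-1$) with $|S|<2(k-1)/e$ (from $e|S|<2(k-1)$) yields $e(k-e+1)<2(k-1)$, i.e.\ $(e-2)(k-e-1)<0$, which is impossible since $e\geq 2$ and $e\leq(k-1)/2$ (from $e<k/2$). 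Thus $e=1$, giving both $d_{G}(S)=|S|$ and $d_{G[S]}(v)=k-1$ for every $v\in S$, and the bound $|S|\geq k$ follows immediately from $|S|-1\geq k-1$.

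The main obstacle I anticipate is the numerical check that $d_{G}(S)<\tfrac{2}{9}(k+1)^{2}$ in the small-degree regime $k\in\{3,4\}$, where the clean inequality $2(k-1)\leq\tfrac{2}{9}(k+1)^{2}$ fails; the fix is to squeeze $d_{G}(S)$ into the narrow integer window $[k,\,2k-3]$ using the lower bound $\lambda(G)=k$ and verify the handful of resulting cases directly.
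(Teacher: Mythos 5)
Your proposal is correct, and there is nothing in the paper to compare it with: Corollary \ref{small2} is imported from \cite{Heuvel} without proof, so your argument is a self-contained derivation rather than a variant of an in-paper one. The individual steps check out: $|S|=2$ forces $d_{G}(S)\geq 2(k-1)$; the exchange identity $d_{G}(S\setminus\{v\})=d_{G}(S)+k-2e_{v}$ together with the two-stage minimality (valid because $|S\setminus\{v\}|\geq 2$) gives $e_{v}<k/2$, hence $|S|>k/2+1\geq(k+1)/3$; the numerical bound $d_{G}(S)<\frac{2}{9}(k+1)^{2}$ holds for $k\geq5$ by $(k-2)(k-5)\geq0$ and for $k\in\{3,4\}$ by squeezing $d_{G}(S)$ into $[k,2k-3]$ via Theorem \ref{econnectivity} (for $k\leq2$ the hypothesis $d_{G}(S)<2(k-1)$ is vacuous since $\lambda(G)=k$); and the final pinch $(e-2)(k-e-1)<0$ with $2\leq e\leq(k-1)/2$ correctly eliminates $e\geq2$, while $e=0$ contradicts $\lambda(G)=k$. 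The one point you should state explicitly rather than leave implicit is that your $S$ really is an admissible choice for Theorem \ref{small}: that theorem requires $S$ to be extremal within the family $\{T:\frac{1}{3}(k+1)\leq|T|\leq\frac{1}{2}n\}$, not merely to satisfy the size and cut bounds. This is immediate — your family contains that one, $S$ lies in both once $|S|\geq\frac{1}{3}(k+1)$ is known, so the two minima of $d_{G}$ coincide and any smaller minimizer in the restricted family would contradict the minimality of $|S|$ in the larger one — but without a sentence to this effect the appeal to Theorem \ref{small} is formally incomplete. With that sentence added, the proof is complete.
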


\begin{cor}\label{2subset} Let $G$ be a connected vertex-transitive $k$-regular graph. Suppose $g(G)>3$ or $|V(G)|<2k$. Then $d_{G}(X)\geq2k-2$ for every $X\subseteq V(G)$ with $2\leq|X|\leq |V(G)|-2$.
\end{cor}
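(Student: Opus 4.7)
The plan is to argue by contradiction, reducing to the minimal situation handled by Corollary \ref{small2}. Suppose some $X \subseteq V(G)$ with $2 \leq |X| \leq |V(G)|-2$ satisfies $d_G(X) < 2(k-1)$. Since $d_G(X) = d_G(\overline{X})$ and $|\overline{X}|$ lies in the same range, I may replace $X$ by $\overline{X}$ if needed and assume $|X| \leq |V(G)|/2$. Now I would select $S \subseteq V(G)$ with $1 < |S| \leq |V(G)|/2$ such that $d_G(S)$ is as small as possible, and subject to this, $|S|$ is as small as possible. Then $d_G(S) \leq d_G(X) < 2(k-1)$, so Corollary \ref{small2} applies and yields
\[
d_G(S) = |S| \geq k \quad \text{and} \quad d_{G[S]}(v) = k-1 \text{ for every } v \in S,
\]
i.e., $G[S]$ is $(k-1)$-regular on at least $k$ vertices.

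The two hypotheses are then dispatched separately. If $|V(G)| < 2k$, then $|V(G)|/2 < k \leq |S| \leq |V(G)|/2$, an immediate contradiction. If instead $g(G) > 3$, then $G[S]$ inherits triangle-freeness, so $g_{0}(G[S]) > 3$ and Mantel's inequality (Lemma \ref{triangle}) applied to the $(k-1)$-regular graph $G[S]$ gives
\[
\frac{|S|(k-1)}{2} \;=\; |E(G[S])| \;\leq\; \frac{|S|^{2}}{4},
\]
hence $|S| \geq 2(k-1)$. This forces $d_G(S) = |S| \geq 2(k-1)$, contradicting our choice of $S$.

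I do not expect a serious obstacle in this argument: the minimality phrasing in Corollary \ref{small2} is tailored exactly for this kind of reduction, so the only care required is the preliminary normalization to $|X| \leq |V(G)|/2$ and the recognition that both alternative hypotheses independently collide with the forced inequality $|S| \geq k$ — the small-order hypothesis directly, and the triangle-free hypothesis via Mantel. The threshold $2k-2$ in the conclusion appears as exactly $2(k-1)$, matching both Corollary \ref{small2} and the Mantel bound, which is why the argument closes cleanly.
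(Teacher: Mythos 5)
Your proof is correct and follows essentially the same route as the paper: choose the minimal fragment $S$, apply Corollary \ref{small2}, and finish with Mantel's bound (Lemma \ref{triangle}) to force $|S|\geq 2k-2$. The only differences are cosmetic simplifications: you settle the case $|V(G)|<2k$ directly from $k\leq|S|\leq\frac{1}{2}|V(G)|$, where the paper detours through Theorem \ref{small} to make $S$ an imprimitive block and deduce $|V(G)|\geq2|S|$ (a fact already guaranteed by $|S|\leq\frac{1}{2}|V(G)|$), and your explicit normalization $|X|\leq\frac{1}{2}|V(G)|$ tidies a point the paper leaves implicit.
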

\begin{proof}
If $k=2$, then it is trivial. Now suppose $k\geq3$ and that there
is a subset $X\subseteq V(G)$ with $2\leq|X|\leq |V(G)|-2$ such that $d_{G}(X)<2k-2$. Let $S$ be a subset of $V(G)$ chosen such that
$1<|S|\leq\frac{1}{2}|V(G)|$, $d_{G}(S)$ is as small as possible, and,
subject to these conditions, $|S|$ is as small as possible. Then
$d_{G}(S)\leq d_{G}(X)<2k-2$. By Corollary \ref{small2}, $d_{G}(S)=|S|\geq k$
and $d_{G[S]}(v)=k-1$ for all $v\in S$. As $2k-3<\frac{2}{9}(k+1)^2$, $S$ is an imprimitive block of $G$ by Theorem \ref{small}. Then $|S|$ is a divisor of $|V(G)|$,
which implies $|V(G)|\geq2|S|\geq2k$. Thus $g(G)>3$. Noting that
$|E(S)|=\frac{1}{2}(k-1)|S|\leq\frac{1}{4}|S|^2$ by Lemma
\ref{triangle}, we have $d_{G}(S)=|S|\geq2k-2$, a contradiction.
\end{proof}

A subset $X$ of $V(G)$ is called an independent set of $G$ if any two vertices in $X$ are not adjacent. The maximum cardinality of independent sets of $G$ is the independent number of $G$, denoted by $\alpha(G)$.

\begin{Lemma}\label{independent} Let $G$ be a non-bipartite vertex-transitive $k$-regular graph. Then $\alpha(G)\leq\frac{1}{2}|V(G)|-\frac{k}{4}$ if $g_{0}(G)\geq5$, and $\alpha(G)\leq\frac{1}{3}|V(G)|$ if $g_{0}(G)=3$.
\end{Lemma}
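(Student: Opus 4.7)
My plan is to use a standard averaging argument over $\mathrm{Aut}(G)$, applied to a carefully chosen ``small'' subgraph. The key inequality I would establish first is that for any $W\subseteq V(G)$,
\[
\alpha(G)\;\leq\;\frac{n\,\alpha(G[W])}{|W|}.
\]
To derive it, let $S$ be a maximum independent set. For every $\varphi\in\mathrm{Aut}(G)$ the set $\varphi(S)\cap W$ is independent in $G[W]$, so $|\varphi(S)\cap W|\leq \alpha(G[W])$. Vertex-transitivity forces each fixed $v\in V(G)$ to lie in $\varphi(S)$ for exactly $\alpha(G)\,|\mathrm{Aut}(G)|/n$ automorphisms $\varphi$, and summing the pointwise bound over $\varphi$ and $v\in W$ before averaging yields the displayed inequality.

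For $g_0(G)=3$, I would take $W=V(\Delta)$ for any triangle $\Delta$ of $G$, which exists by assumption. Then $|W|=3$ and $\alpha(G[W])\leq 1$, so the inequality immediately gives $\alpha(G)\leq n/3$.

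For $g_0(G)\geq 5$, I would take $W=V(C)$ for a shortest odd cycle $C$ of $G$, so $|W|=g_0(G)$. Because $C$ is shortest odd, it must be chordless (any chord would split $C$ into two shorter cycles, one of them odd, contradicting the choice of $g_0$), so $G[W]=C$ and $\alpha(G[W])=(g_0-1)/2$. The key inequality then reads
\[
\alpha(G)\;\leq\;\frac{n(g_0-1)}{2g_0}\;=\;\frac{n}{2}-\frac{n}{2g_0},
\]
and invoking Lemma \ref{order-girth}, which provides $n\geq kg_0/2$, yields $n/(2g_0)\geq k/4$ and hence $\alpha(G)\leq n/2-k/4$.

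No step here is particularly difficult; the conceptual heart is the averaging trick, which upgrades a local independence ratio $\alpha(G[W])/|W|$ into the global ratio $\alpha(G)/n$, with the choice of $W$ forced by the odd girth. The only point worth double-checking is the chord-freeness of a shortest odd cycle, which is standard; after that, Lemma \ref{order-girth} finishes the job in the $g_0\geq 5$ case with room to spare.
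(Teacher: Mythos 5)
Your proof is correct, but it reaches the key inequality $\alpha(G)\leq \frac{(g_{0}-1)|V(G)|}{2g_{0}}$ by a different double count than the paper. You fix one shortest odd cycle $C$ (chordless, as you rightly check), and average the images $\varphi(S)$ of a maximum independent set over the whole automorphism group, using the orbit--stabilizer count that exactly $\alpha(G)|\mathrm{Aut}(G)|/|V(G)|$ automorphisms put a given vertex into $\varphi(S)$ --- this is the standard Albertson--Collins ``no-homomorphism'' averaging lemma, applied with $W=V(C)$ (or a triangle when $g_{0}=3$). The paper instead fixes the maximum independent set $X$ once and double counts incidences between vertices and \emph{all} $g_{0}$-cycles of $G$: vertex-transitivity is used only through the weaker fact that every vertex lies on the same number $q$ of $g_{0}$-cycles, and the bound $|\overline{X}|-|X|\geq |V(G)|/g_{0}$ follows from each $g_{0}$-cycle having at most $(g_{0}-1)/2$ vertices of $X$. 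The two routes are dual averaging arguments giving the same intermediate bound, and both finish identically with Lemma \ref{order-girth} when $g_{0}\geq5$. Your version buys generality (the ratio bound $\alpha(G)/|V(G)|\leq\alpha(G[W])/|W|$ holds for any $W$ and is reusable), at the cost of explicitly handling the group action; the paper's version is more self-contained, needing only the constancy of $q$ and no stabilizer counting. Both are complete proofs of the stated lemma.
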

\begin{proof} Let $X$ be a maximum independent set of $G$ and set $g_{0}:=g_{0}(G)$. Noting that $G$ is regular and non-bipartite, we have $|X|<|\overline{X}|$. Set $t=|\overline{X}|-|X|$. Since $G$ is vertex-transitive, the number of $g_{0}$-cycles of $G$ containing any given vertex in $G$ is constant. Let $q$ be this constant number and let $m$ be the number of all the $g_{0}$-cycles of $G$. Note that each $g_{0}$-cycle of $G$ contains at most $(g_{0}-1)/2$ vertices in $X$ and at least $(g_{0}+1)/2$ vertices in $\overline{X}$. We have $q|X|\leq\frac{1}{2}m(g_{0}-1)$ and $q|\overline{X}|\geq\frac{1}{2}m(g_{0}+1)$, which implies $qt=q(|\overline{X}|-|X|)\geq m$.

We know $q|V(G)|=mg_{0}$ by the vertex-transitivity of $G$. Then $qt\geq
m=\frac{q}{g_{0}}|V(G)|$, implying $t\geq\frac{|V(G)|}{g_{0}}$. If $g_{0}=3$, then $\alpha(G)=\frac{1}{2}(|V(G)|-t)\leq\frac{1}{3}|V(G)|$. If $g_{0}\geq5$, then $|V(G)|\geq kg_{0}/2$ by Lemma \ref{order-girth}, which implies $\alpha(G)=\frac{1}{2}(|V(G)|-t)\leq\frac{1}{2}|V(G)|-\frac{k}{4}$.
\end{proof}

A graph $G$ is called {\em trivial} if $|V(G)|=1$.

\begin{Lemma}\label{girthX} Let $G$ be a connected non-bipartite vertex-transitive graph. Let $X$ be an independent set of $G$. Suppose that $G-X$ has $|X|-t$ trivial components, where $t$ is a positive integer. Then $g_{0}(G)\geq\frac{2|X|}{t}+1$.
\end{Lemma}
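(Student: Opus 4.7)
The plan is to run a double-counting argument over all shortest odd cycles of $G$, exploiting vertex-transitivity. Write $g_0 := g_0(G)$ and let $Y$ denote the set of isolated vertices of $G-X$, so that $|Y| = |X|-t$; put $Z := V(G) \setminus (X \cup Y)$. Since every $y \in Y$ is isolated in $G-X$, all its neighbors lie in $X$; in particular $Y$ is an independent set and there are no edges between $Y$ and $Z$, so $G[X \cup Y]$ is bipartite. By vertex-transitivity, each vertex of $G$ lies in the same number $q$ of $g_0$-cycles; if $m$ is the total number of $g_0$-cycles, then $q|V(G)| = mg_0$. For each $g_0$-cycle $C$, let $a_C$ and $b_C$ denote the number of vertices of $C$ in $X$ and in $Y$ respectively, so that $\sum_C a_C = q|X|$ and $\sum_C b_C = q|Y|$.

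The key structural step is to analyze any $g_0$-cycle $C$ with $a_C \geq 1$. The $a_C$ vertices of $X$ on $C$ partition $C$ into $a_C$ arcs, each an $X$-$X$ path whose interior lies in $Y \cup Z$. Since $Y$ is independent and has no edges to $Z$, I would argue that each arc's interior is either a single $Y$-vertex (so the arc has length $2$) or consists entirely of $Z$-vertices (so the arc has length $\geq 2$); call these short and long arcs respectively. Thus the number of short arcs equals $b_C$ and the number of long arcs equals $a_C - b_C$. Every arc has length at least $2$, so $a_C \leq (g_0-1)/2 =: s$. Crucially, since $g_0$ is odd while the total length contributed by short arcs is the even number $2b_C$, at least one long arc must occur, so $a_C - b_C \geq 1$.

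To finish, let $m_1$ be the number of $g_0$-cycles $C$ with $a_C \geq 1$. Summing the bound $a_C \leq s$ over all $g_0$-cycles gives $q|X| = \sum_C a_C \leq m_1 s$. Summing $a_C - b_C \geq 1$ over cycles with $a_C \geq 1$ (the contribution vanishes when $a_C = 0$, because a $g_0$-cycle avoiding $X$ cannot contain any $Y$-vertex) gives $qt = q(|X|-|Y|) = \sum_C(a_C - b_C) \geq m_1$. Combining, $q|X| \leq qt \cdot s$, so $g_0 = 2s + 1 \geq 2|X|/t + 1$, as required.

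The main obstacle is the structural analysis of the arcs: one must carefully use the independence of $Y$ and the non-existence of $Y$-$Z$ edges to rule out mixed interiors, and then exploit the odd parity of $g_0$ to guarantee a long arc. Once those facts are in hand, the double counting that assembles the two global bounds is routine.
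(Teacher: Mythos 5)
Your proof is correct and follows essentially the same route as the paper: a double count of incidences between shortest odd cycles and the sets $X$ and $Y$, using vertex-transitivity, the bound $a_C\leq\frac{1}{2}(g_0-1)$ from independence of $X$, and the strict inequality $a_C-b_C\geq1$ for cycles meeting $X$. The only difference is presentational: your arc-decomposition and parity argument spells out the inequality $b_C<a_C$ that the paper asserts directly from the independence of $X$ and $Y$ (and the fact that all neighbours of $Y$ lie in $X$).
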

\begin{proof} Let $Y$ be the set of vertices in the trivial components of $G-X$ and set $g_{0}:=g_{0}(G)$. Let $n_{i,j}$ be the number of $g_{0}$-cycles of $G$ which contain exactly $i$ vertices in $X$ and $j$ vertices in $Y$. Set $s=\frac{1}{2}(g_{0}-1)$. Since $X$ and $Y$ are independent sets of $G$, each $g_{0}$-cycle of $G$ contains at most $s$ vertices in $X$ and contains less vertices in $Y$ than in $X$. Let $q$ be the number of $g_{0}$-cycles of $G$ containing any given vertex in $G$. We have $\sum_{0\leq j<i\leq s}in_{i,j}=q|X|$ and $\sum_{0\leq j<i\leq s}jn_{i,j}=q|Y|=q(|X|-t)$. Then $q|X|=\sum_{0\leq j<i\leq s}in_{i,j}\leq\sum_{0\leq j<i\leq s}s(i-j)n_{i,j}=s(\sum_{0\leq j<i\leq s}in_{i,j}-\sum_{0\leq j<i\leq s}jn_{i,j})=sqt=\frac{1}{2}(g_{0}-1)qt$, which implies
$g_{0}\geq\frac{2|X|}{t}+1$.
\end{proof}

\begin{Lemma}\label{lessnumedge} Let $G$ be a vertex-transitive
graph with a triangle. Then the number of trivial components of $G-X$ is not larger than $|E(X)|$ for each subset $X\subseteq V(G)$.
\end{Lemma}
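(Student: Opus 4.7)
The plan is a double count. Let $Y\subseteq\overline{X}$ denote the set of vertices forming trivial components of $G-X$, and let $N$ denote the number of triangles of $G$ having exactly one vertex in $Y$. Since each $v\in Y$ satisfies $N_G(v)\subseteq X$, the set $Y$ is independent, so no triangle of $G$ has two vertices in $Y$, and every triangle of $G$ meeting $Y$ contributes to $N$, has its two other vertices in $X$, and has its third edge in $E(X)$.

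By vertex-transitivity, the number $t$ of triangles of $G$ through any given vertex is a constant, and since $G$ contains a triangle we have $t\geq 1$. Counting $N$ vertex-by-vertex in $Y$: each $v\in Y$ accounts for exactly the $t$ triangles through it (all of which have the required form), with no double counting by the independence of $Y$, giving $N=t|Y|$. Counting $N$ edge-by-edge in $E(X)$: for a fixed $uu'\in E(X)$, the triangles of $N$ using that edge correspond to common $Y$-neighbors of $u$ and $u'$, so their number is at most $|N_G(u)\cap N_G(u')|$. The key bound is
\[
|N_G(u)\cap N_G(u')|\leq t,
\]
because each common neighbor of $u$ and $u'$ yields a distinct triangle through $u$, and there are only $t$ such triangles. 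Summing over $E(X)$ yields $N\leq t\,|E(X)|$.

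Combining the two counts gives $t|Y|\leq t|E(X)|$, and dividing by $t\geq 1$ produces the required inequality $|Y|\leq|E(X)|$. The only non-bookkeeping step is the inequality $|N_G(u)\cap N_G(u')|\leq t$, which is immediate once vertex-transitivity is used to make the per-vertex triangle count uniform; the hypothesis "$G$ has a triangle" enters precisely to ensure $t\geq 1$ so that the final division is legitimate.
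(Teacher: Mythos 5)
Your proof is correct and is essentially the paper's argument: both count the $t|Y|$ triangles through the trivial-component vertices, note each such triangle contributes an edge of $E(X)$, and bound the number of such triangles per edge of $E(X)$ by the per-vertex triangle constant $t$ (the paper phrases this as a pigeonhole contradiction under $|Y|>|E(X)|$, you phrase it as a direct double count). No gap; the two write-ups differ only in presentation.
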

\begin{proof} Let $Y$ be the set of vertices in the trivial components of $G-X$. Suppose $|Y|>|E(X)|$. Let $q$ be the number of triangles of $G$ containing any given vertex in $G$. Note that there are $q|Y|$ triangles of $G$ containing vertices in $Y$. As $|Y|>|E(X)|$, it implies that $G[X]$ has an edge $e$ which is contained in more than $q$ triangles. This means that more than $q$ triangles containing both ends of $e$, a contradiction.
\end{proof}

\begin{Lemma}\label{K36} Let $G$ be a connected triangle-free vertex-transitive $6$-regular graph of even order. Suppose that there are $3$ distinct vertices with the same neighbors. Then $G$ is bipartite.
\end{Lemma}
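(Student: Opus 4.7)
The plan is to analyze the equivalence relation $\sim$ on $V(G)$ defined by $u\sim v$ iff $N_G(u)=N_G(v)$, and to show that the structure of its classes forces $G$ to be bipartite.

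Three basic properties of $\sim$ are immediate. (i) Each $\sim$-class is an independent set of $G$: if distinct $u\sim v$ were adjacent, then $v\in N(u)=N(v)$ would make $v$ its own neighbor. (ii) Because every automorphism of $G$ preserves neighborhoods, vertex-transitivity forces all $\sim$-classes to have a common size $m$, and the hypothesis supplies $m\geq 3$. (iii) For every $v\in V(G)$, the neighborhood $N(v)$ is a union of $\sim$-classes, since $x\in N(v)$ and $x'\sim x$ imply $v\in N(x)=N(x')$, i.e.\ $x'\in N(v)$. Combining (ii) and (iii) with $|N(v)|=6$ yields $m\mid 6$, so $m\in\{3,6\}$.

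If $m=6$, then for any $\sim$-class $V_i$ the set $N(V_i)$ is a single class $V_j$; a symmetric argument gives $N(V_j)=V_i$, so $V_i\cup V_j$ induces a $K_{6,6}$ which is a connected component of $G$. Connectivity then yields $G=K_{6,6}$, which is bipartite.

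If $m=3$, I would form the quotient $\hat{G}$ whose vertices are the $\sim$-classes, with two classes joined iff they are completely joined in $G$. By (i), (iii) and $m\mid 6$, $\hat{G}$ is a loopless simple $2$-regular graph, and connectivity of $G$ makes $\hat{G}$ connected, so $\hat{G}$ is a cycle $C_r$. Then $G$ is the lexicographic blow-up of $C_r$ by an independent set of size $3$: triangle-freeness of $G$ excludes $r=3$, so $r\geq 4$, while the even-order hypothesis $|V(G)|=3r$ forces $r$ to be even, whence $C_r$ and its blow-up $G$ are bipartite. The only step requiring a bit of care is the verification that $\hat{G}$ is genuinely a simple $2$-regular graph in the $m=3$ case; everything else is a routine analysis of blow-ups.
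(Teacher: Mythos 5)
Your proof is correct, but it takes a different route from the paper's. The paper argues by contradiction: assuming $G$ is non-bipartite, it takes a shortest odd cycle $u_0u_1\dots u_{g_0-1}u_0$ (with $g_0\geq 5$ by triangle-freeness), uses vertex-transitivity to attach to each $u_i$ two twins $u_i',u_i''$ off the cycle, notes that consecutive twin-classes $U_i$, $U_{i+1}$ are completely joined, and then $6$-regularity together with connectivity forces $V(G)=\bigcup_{i}U_i$, so $|V(G)|=3g_0$ is odd, contradicting the even order. You instead classify the structure directly: the relation $N(u)=N(v)$ partitions $V(G)$ into independent classes of common size $m$ with $m\geq 3$ and $m\mid 6$, every neighbourhood being a union of classes, so $G$ is either $K_{6,6}$ (when $m=6$) or the blow-up of a cycle $C_r$ by independent triples (when $m=3$), and then $|V(G)|=3r$ even forces $r$ even, hence bipartiteness. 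Your version needs the small case analysis on $m$ and the check that the quotient is a simple connected $2$-regular graph, which your observations (i)--(iii) do supply since any edge between two classes forces a complete join; in exchange it yields a full structural description of the graphs in question and never invokes the odd girth, whereas the paper's shortest-odd-cycle contradiction is shorter and sidesteps the $m=6$ case because it only needs each vertex to have at least two twins. Both arguments end with the same parity point, namely that a union of size-$3$ twin classes indexed by an odd cycle has odd cardinality. A minor remark: in your $m=3$ case the exclusion of $r=3$ by triangle-freeness is redundant, since the parity of $3r$ already forces $r$ to be even.
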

\begin{proof} Suppose, to the contrary, that $G$ is non-bipartite. Then $g_{0}:=g_{0}(G)\geq5$. Let $C=u_{0}u_{1}\dots u_{g_{0}-1}u_{0}$ be a $g_{0}$-cycle of $G$. For any pair of vertices $u$ and $v$ in $V(C)$, $N(u)\neq N(v)$. So for each $u_{i}\in V(C)$ there are two distinct vertices $u'_{i}$ and $u''_{i}$ in $\overline{V(C)}$ such that $N(u_{i})=N(u'_{i})=N(u''_{i})$ by the vertex-transitivity of $G$. Set $U_{i}=\{u_{i},u'_{i},u''_{i}\}$. Then $U_{i}$ is an independent set of $G$ and $U_{i}\cap U_{j}=\emptyset$ for $j\neq i$. Noting that $u_{i}$ and $u_{i+1}$ are adjacent, every vertex in $U_{i}$ is adjacent to every vertex in $U_{i+1}$, where $i+1$ is an arithmetic on modular $g_{0}$. Since $G$ is 6-regular and connected,  $|V(G)|=|\bigcup_{i=0}^{g_{0}-1}U_{i}|=3g_{0}$, which implies that $|V(G)|$ is odd, a contradiction.
\end{proof}

\section{$\lambda_{s}$-atoms of vertex-transitive graphs}

In this section, we will present the concept of $\lambda_{s}$-atoms \cite{Holtkamp,Yang} of graphs in investigating the $s$-restricted edge-connectivity of graphs. The $s$-restricted edge-connectivity of graphs was proposed by F\`{a}brega and Fiol \cite{Fabrega}.

For a connected graph $G$ and some positive integer $s$, an edge-cut $F$ of $G$ is said to be an {\em $s$-restricted edge-cut} of $G$ if every component of $G-F$ has at least $s$ vertices. The minimum cardinality of $s$-restricted edge-cuts of $G$ is the {\em $s$-restricted edge-connectivity} of $G$, denoted by $\lambda_{s}(G)$. By the definition of $\lambda_{s}(G)$, we can see that $\lambda(G)=\lambda_{1}(G)\leq\lambda_{2}(G)\leq\lambda_{3}(G)\cdots$ as long as these parameters exists.

A proper subset $X$ of $V(G)$ is called a {\em $\lambda_{s}$-fragment} of $G$ if $\nabla(X)$ is an $s$-restricted edge-cut of $G$ with minimum cardinality. We can see that for every $\lambda_{s}$-fragment $X$ of $G$, $G[X]$ and $G[\overline{X}]$ are connected graphs of order at least $s$. A $\lambda_{s}$-fragment of $G$ with minimum cardinality is called a {\em $\lambda_{s}$-atom} of $G$.

\begin{Lemma}\label{5rec} Let $G$ be a connected triangle-free vertex-transitive graph of degree $k\geq5$. For an integer $4\leq s\leq8$, suppose $\lambda_{s}(G)\leq3k$. Let $S$ be a $\lambda_{s}$-atom of $G$.\\
(\textsl{a}) For $X\subseteq V(G)$ with $|X|\geq s$ and $|\overline{X}|\geq s$, we have $d_{G}(X)\geq\lambda_{s}(G)$. Furthermore, $d_{G}(X)>\lambda_{s}(G)$ if $G[X]$ or $G[\overline{X}]$ is disconnected.\\
(b) For $A\subseteq S$ with $1\leq|A|\leq|S|-s$, we have $d_{G[S]}(A)>\frac{1}{2}d_{G}(A)$.\\
(c) For each $\lambda_{s}$-atom $T$ of $G$ with $S\neq T$ and $S\cap
T\neq\emptyset$, we have $d_{G}(S\cap T)+d_{G}(S\cup
T)\leq2\lambda_{s}(G)$, $d_{G}(S\backslash T)+d_{G}(T\backslash
S)\leq2\lambda_{s}(G)$, $|S\cap T|\leq s-1$ and $|S\backslash T|\leq
s-1$.
\end{Lemma}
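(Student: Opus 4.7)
My plan is to prove (a) first, since both (b) and (c) reduce to it via the atom minimality of $S$ together with standard cut-function manipulations.

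Part (a). If both $G[X]$ and $G[\overline X]$ are connected, then $\nabla(X)$ is an $s$-restricted edge cut (both sides have at least $s$ vertices), so $d_G(X) \geq \lambda_s(G)$ by definition. Otherwise, say $G[X]$ is disconnected with components $X_1,\dots,X_l$ ($l \geq 2$); no edges cross between components, so $d_G(X) = \sum_i d_G(X_i)$. If some $|X_i| \geq s$, apply (a) inductively on $|X|$ to $X_i$ (valid as $|\overline{X_i}| \geq |\overline{X}| \geq s$ and $|X_i| < |X|$) to get $d_G(X_i) \geq \lambda_s(G)$, and combine with $d_G(X \setminus X_i) \geq \lambda(G) = k$ from Theorem \ref{econnectivity} to obtain $d_G(X) \geq \lambda_s(G) + k > \lambda_s(G)$. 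If instead every $|X_i| < s$, bound $d_G(X_i) = k|X_i| - 2|E(X_i)|$ below using Mantel's theorem (Lemma \ref{triangle}: $|E(X_i)| \leq |X_i|^2/4$ since $G$ is triangle-free), and check case-by-case across $|X_i| \in \{1,\dots,s-1\}$ that $\sum_i d_G(X_i) > 3k \geq \lambda_s(G)$, invoking $k \geq 5$ and $s \leq 8$. Both sub-cases yield strict inequality, proving the ``furthermore'' clause.

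Part (b). Apply (a) to $A' := S \setminus A$. From $|A| \leq |S|-s$ I get $|A'| \geq s$, and $|\overline{A'}| = |\overline{S}| + |A| \geq s+1$, so (a) yields $d_G(A') \geq \lambda_s(G) = d_G(S)$. Since $|A'| < |S|$ and $S$ is a $\lambda_s$-atom, $A'$ is not a $\lambda_s$-fragment, so either $d_G(A') > \lambda_s(G)$ directly, or $\nabla(A')$ is not $s$-restricted; in the latter case some component of $G[A']$ or $G[\overline{A'}]$ has size less than $s$, so that side is disconnected and the strict clause of (a) still forces $d_G(A') > \lambda_s(G)$. A direct count yields
\begin{equation*}
d_G(A') = d_G(S) + d_{G[S]}(A) - |[A, \overline{S}]|,
\end{equation*}
so the strict inequality simplifies to $d_{G[S]}(A) > |[A,\overline{S}]|$, equivalently $d_{G[S]}(A) > \tfrac{1}{2}d_G(A)$.

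Part (c). The two bounds $d_G(S \cap T) + d_G(S \cup T) \leq 2\lambda_s(G)$ and $d_G(S \setminus T) + d_G(T \setminus S) \leq 2\lambda_s(G)$ are standard submodularity of the cut function (the discarded cross-edge counts are nonnegative). For $|S \cap T| \leq s-1$, assume $|S \cap T| \geq s$; then $|\overline{S \cap T}| \geq |\overline{S}| \geq s$, so (a) gives $d_G(S \cap T) \geq \lambda_s(G)$. Moreover $|S| = |T| \leq |V(G)|/2$ (else $\overline{S}$ would be a smaller $\lambda_s$-fragment than $S$), which together with $|S \cap T| \geq s$ forces $|\overline{S \cup T}| \geq s$, so (a) gives $d_G(S \cup T) \geq \lambda_s(G)$ as well. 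Combined with the submodular upper bound, both become equalities, and the strict clause of (a) forces $G[S \cap T]$ and $G[\overline{S \cap T}]$ both to be connected; thus $S \cap T$ is a $\lambda_s$-fragment of size $|S \cap T| < |S|$, contradicting atomicity. The bound $|S \setminus T| \leq s-1$ is obtained symmetrically through the second submodular inequality, applying (a) to both $S \setminus T$ and $T \setminus S$ (whose complements have size at least $|V(G)| - |S| \geq s$).

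I expect the main obstacle to lie in the case analysis for the disconnected, all-small-components sub-case of (a): the Mantel-based bound must produce $\sum_i d_G(X_i) > 3k$ uniformly, and tight configurations such as $|X| = s$ split as $(1, s-1)$ leave only a thin margin that relies crucially on $k \geq 5$ and triangle-freeness.
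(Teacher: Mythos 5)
Your parts (b) and (c) are sound. Part (b) is the paper's own argument run in contrapositive form (the identity $d_G(S\setminus A)=d_G(S)+d_{G[S]}(A)-|[A,\overline S]|$ plus atom minimality), and in (c) you replace the paper's use of (b) (which gives the strict bound $d_G(S\cap T)>\lambda_s(G)$ when $|S\cap T|\ge s$) by forcing equality in the submodular bound and then exhibiting $S\cap T$ (resp.\ $S\setminus T$) as a $\lambda_s$-fragment of cardinality smaller than the atom; that variant is correct and if anything a little cleaner, since it never needs (b) for the cardinality bounds.

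The gap is in (a), at the step ``apply (a) inductively on $|X|$ to $X_i$.'' The statement you induct on is symmetric under $X\leftrightarrow\overline X$, but your measure $|X|$ is not: for the component $X_i$ the only nontrivial situation is that $G[\overline{X_i}]$ is disconnected, and your own scheme (``say $G[X]$ is disconnected'') then passes to $\overline{X_i}$ and to its components, whose sizes can exceed $|X|$, so the induction hypothesis does not cover them and the recursion is not well-founded as justified. What rescues the argument is a structural fact you never state: if $Z$ is a component of $G[\overline Y]$ with $G[Y]$ connected, then $G[\overline Z]$ is connected, because every other component of $G[\overline Y]$ sends all its outside edges into the connected set $Y$. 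With that remark, any large component met at the second level already has both sides of its cut connected, so $\nabla(Z)$ is $s$-restricted by definition and the recursion stops after two levels; the remaining all-small-components case is closed by your Mantel count, which I agree works, with the tight configuration $k=5$ and parts $(7,1)$ giving $16>15=3k$. The paper sidesteps the recursion differently: it splits at component size $4$ rather than $s$, invokes exactly this connectivity observation for $\overline{V(H_2)}$ to get $d_G(X)\ge k+\lambda_4(G)$, and then compares $\lambda_4(G)$ with $\lambda_s(G)$ through $\tau_s(G)=\min\{d(A):4\le|A|\le s-1\}>2k$. Either route is fine, but as written your proof of (a) needs the missing connectivity observation (or the paper's $\lambda_4$ detour) to be complete.
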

\begin{proof} (a) If $G[X]$ and $G[\overline{X}]$ are connected, then $\nabla(X)$ is an $s$-restricted edge-cut of $G$ and hence $d_{G}(X)\geq\lambda_{s}(G)$. Thus it only needs to show $d_{G}(X)>\lambda_{s}(G)$ if $G[X]$ or $G[\overline{X}]$ is disconnected.

Suppose that $G[X]$ is disconnected. If each component of $G[X]$ has less than 4 vertices, then $d_{G}(X)=k|X|-2|E(X)|\geq k|X|-2(|X|-2)\geq(k-2)s+4>3k\geq\lambda_{s}(G)$. Then we assume that $G[X]$ has a component $H_{1}$ with at least 4 vertices. If each component of $G[\overline{V(H_{1})}]$ has less than 4 vertices, then $d_{G}(X)>d_{G}(H_{1})=d_{G}(\overline{V(H_{1})})>\lambda_{s}(G)$. Then we assume further that $G[\overline{V(H_{1})}]$ has a component $H_{2}$ with at least 4 vertices. We know that $G[\overline{V(H_{2})}]$ is connected as $G$ is connected, which implies that $\nabla(H_{2})$ is 4-restricted edge-cut of $G$. Noting that $\lambda(G)=k$ by Theorem \ref{econnectivity}, we have $d_{G}(X)\geq\lambda(G)+d_{G}(H_{1})\geq k+d(V(H_{2}))\geq k+\lambda_{4}(G)$.

So $d(X)>\lambda_{4}(G)$. Next we consider the case that $5\leq s\leq8$. Set $\tau_{s}(G)=\textrm{min}\{d(A): A\subseteq V(G), 4\leq|A|\leq s-1\}$. Then $\lambda_{4}(G)\geq\textrm{min}\{\lambda_{s}(G),\tau_{s}(G)\}$. For each subset $A\subseteq V(G)$ with $4\leq|A|\leq7$, noting that $|E(A)|\leq\frac{1}{4}|A|^{2}$ by Lemma \ref{triangle}, we have $d(A)=k|A|-2|E(A)|\geq k|A|-\frac{1}{2}|A|^{2}>2k$. Hence $\tau_{s}(G)>2k$. If $\lambda_{s}(G)>2k$, then $d(X)\geq k+\lambda_{4}(G)>k+2k\geq\lambda_{s}(G)$. If $\lambda_{s}(G)\leq2k$, then, noting $\textrm{min}\{\lambda_{s}(G),\tau_{s}(G)\}\leq\lambda_{4}(G)\leq\lambda_{s}(G)$, we have $d(X)\geq k+\lambda_{4}(G)=k+\lambda_{s}(G)>\lambda_{s}(G)$.

(b) To the contrary, suppose $d_{G[S]}(A)\leq\frac{1}{2}d_{G}(A)$.
Then $d_{G}(S\backslash A)=d_{G}(S)-(d_{G}(A)-2d_{G[S]}(A))\leq
d_{G}(S)=\lambda_{s}(G)$. By (a), $G[S\backslash A]$ and
$G[\overline{S}\cup A]$ are connected. Hence $\nabla(S\backslash A)$
is an $s$-restricted edge-cut of $G$. By the minimality of
$\lambda_{s}$-atoms of $G$, we have $d_{G}(S\backslash
A)>\lambda_{s}(G)$, a contradiction.

(c) By the well-known submodular inequality (see \cite{Biggs} for
example), we have that $d_{G}(S\cap T)+d_{G}(S\cup T)\leq
d_{G}(S)+d_{G}(T)=2\lambda_{s}(G)$ and $d_{G}(S\backslash
T)+d_{G}(T\backslash
S)=d_{G}(S\cap\overline{T})+d_{G}(S\cup\overline{T})\leq
d_{G}(S)+d_{G}(\overline{T})=2\lambda_{s}(G)$. Next we show $|S\cap
T|\leq s-1$ and $|S\backslash T|\leq s-1$. Clearly, they hold if
$|S|=s$. So we may assume $|S|>s$.

Suppose $|S\cap T|\geq s$. Then $d_{G}(S\cap
T)=d_{G}(S)+2d_{G[S]}(S\backslash T)-d_{G}(S\backslash
T)>d_{G}(S)=\lambda_{s}(G)$ by (b). Noting $|\overline{S\cup
T}|\geq|V(G)|-|S|-|T|+|S\cap T|\geq s$, we have $d_{G}(S\cup
T)\geq\lambda_{s}(G)$ by (a). Hence $d_{G}(S\cap T)+d_{G}(S\cup
T)>2\lambda_{s}(G)$, a contradiction. Thus $|S\cap T|\leq s-1$.

If $|S\backslash T|=|T\backslash S|\geq s$, then we can similarly
obtain $d_{G}(S\backslash T)>\lambda_{s}(G)$ and $d_{G}(T\backslash
S)>\lambda_{s}(G)$ by (b), which implies $d_{G}(S\backslash
T)+d_{G}(T\backslash S)>2\lambda_{s}(G)$, a contradiction. Thus
$|S\backslash T|\leq s-1$.
\end{proof}

\begin{Lemma}\label{5rec14to15} Let $G$ be a connected triangle-free vertex-transitive $5$-regular graph of even order. For $s=5$ or $6$, suppose
$\lambda_{s}(G)=s+9$. Then $|S|\geq s+5$ for a $\lambda_{s}$-atom $S$ of $G$.
\end{Lemma}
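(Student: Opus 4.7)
My plan is to argue by contradiction: assume $|S| \le s+4$, and eliminate each possible value of $|S|$ using parity, $5$-regularity, and Lemma \ref{5rec}(b). Since $G$ is $5$-regular, $d_G(S) = 5|S| - 2|E(S)| = \lambda_s(G) = s+9$, so $5|S| - (s+9)$ must be even; for $s \in \{5,6\}$ this rules out $|S| \in \{s,\, s+2,\, s+4\}$ on parity grounds. For $|S| = s+1$, Lemma \ref{5rec}(b) applied with $|A|=1$ forces $d_{G[S]}(v) \ge 3$ for every $v \in S$, so $2|E(S)| \ge 3(s+1)$; combining with $2|E(S)| = 4s-4$ yields $s \ge 7$, contradicting $s \in \{5,6\}$. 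Hence only $|S| = s+3$ remains, and in that case $|E(S)| = 2s+3$ and $|S|-s = 3$, so Lemma \ref{5rec}(b) is available for $|A| \in \{1,2,3\}$.

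Let $a,b,c$ count the vertices of $G[S]$-degree $3,4,5$ respectively (lower bound from Lemma \ref{5rec}(b), upper bound from $5$-regularity). The identities $a+b+c = s+3$ and $3a+4b+5c = 4s+6$ reduce to $b+2c = s-3$, so for $s \in \{5,6\}$ one has $(b,c) \in \{(s-3,0),\, (s-5,1)\}$. In the case $(b,c) = (s-3,0)$, a short linear edge-count between the two degree-classes forces at least one edge $uv$ in $G[S]$ with $d_{G[S]}(u) = d_{G[S]}(v) = 3$. Taking $A := \{u,v\}$ gives $d_G(A) = 8$ and $d_{G[S]}(A) = 4 = \tfrac{1}{2}d_G(A)$, contradicting Lemma \ref{5rec}(b).

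The case $(b,c) = (s-5,1)$ is where I expect the main obstacle. Let $v$ be the unique $G[S]$-degree-$5$ vertex, let $\{n_1,\dots,n_5\} := N_G(v) \subseteq S$ (an independent set, since $G$ is triangle-free), and let $w_1,\dots,w_{s-3}$ be the remaining vertices of $S$. For $s=5$ there are only two $w$-vertices, both of $G[S]$-degree $3$; since each $n_i$ has two $G[S]$-neighbours outside $\{v\}$ and none among the $n_j$'s, each $n_i$ must be adjacent to both $w_1$ and $w_2$, pushing $d_{G[S]}(w_j) \ge 5$, a contradiction. For $s=6$, bookkeeping the edges between $\{n_i\}$ and $\{w_j\}$ first rules out the possibility that the unique degree-$4$ vertex lies among the $n_i$'s (this would force a negative number of $w$-$w$ edges), so the degree-$4$ vertex is some $w_1$; the same bookkeeping forces $\{w_1,w_2,w_3\}$ pairwise non-adjacent, and a further pigeon-hole determines $G[S]$ uniquely up to isomorphism: after relabelling, $w_1$ is adjacent to $n_1,n_2,n_3,n_4$, $w_2$ to $n_1,n_2,n_5$, and $w_3$ to $n_3,n_4,n_5$. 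Taking $A := \{n_5,\, w_2,\, w_3\}$, the two internal edges $n_5 w_2$ and $n_5 w_3$ yield $d_G(A) = 11$ and $d_{G[S]}(A) = 1+2+2 = 5 < \tfrac{11}{2}$, violating Lemma \ref{5rec}(b). The delicate step is isolating this \emph{cherry} $w_2 - n_5 - w_3$: simpler pairwise comparisons of degrees all remain within the permitted bound, and one genuinely needs a $3$-vertex $A$ whose two internal edges, combined with the small $G[S]$-degrees of its vertices, drop $d_{G[S]}(A)$ just below $\tfrac{1}{2}d_G(A)$.
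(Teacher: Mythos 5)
Your proof is correct, and its first half is exactly the paper's: parity of $5|S|-2|E(S)|=s+9$ rules out $|S|\in\{s,s+2,s+4\}$, and Lemma \ref{5rec}(b) on singletons kills $|S|=s+1$, leaving $|S|=s+3$. Where you diverge is the endgame for $|S|=s+3$. You split on the degree sequence $(b,c)$ and, in the case $(b,c)=(s-5,1)$, determine $G[S]$ explicitly and exhibit the cherry $A=\{n_5,w_2,w_3\}$ with $d_{G[S]}(A)=5<\frac{11}{2}$; I checked the bookkeeping (the degree-$4$ vertex cannot lie in $N_G(v)$, the $w$'s are pairwise non-adjacent, the bipartite structure between the $n$'s and $w$'s is forced) and it all holds. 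The paper instead lets $R$ be the set of $G[S]$-degree-$3$ vertices, observes $E(R)=\emptyset$ by the pair case of Lemma \ref{5rec}(b) (your case-$(s-3,0)$ computation $d_{G[S]}(\{u,v\})=4=\frac{1}{2}d_G(\{u,v\})$, applied unconditionally), notes $2|E(S)|=4s+6$ forces $|R|\geq 6$, and then counts across the cut: $d_{G[S]}(R)=3|R|\geq 18$ while $d_{G[S]}(S\setminus R)\leq 5(s-3)\leq 15$, a contradiction that disposes of both of your cases simultaneously. In particular your closing remark that one ``genuinely needs'' a three-vertex set is not accurate: in your case $(b,c)=(s-5,1)$ the seven degree-$3$ vertices are still independent by the pair argument, and the same count ($3\cdot 7=21$ against at most $4b+5c\leq 9$) finishes with no structural determination of $G[S]$ and no use of $|A|=3$. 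What your route buys is an explicit picture of the would-be atom (occasionally useful for later arguments); what the paper's route buys is brevity and uniformity in $s$.
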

\begin{proof} Suppose to the contrary that $|S|<s+5$. As $s+9=d_{G}(S)=5|S|-2|E(S)|$, $|S|$ and $s$ have different parities. Hence $|S|\geq s+1$. By Lemma \ref{5rec}(b), $\delta(G[S])\geq3$. If $|S|=s+1$, then $2|E(S)|\geq\delta(G[S])|S|\geq3|S|$, which implies $d_{G}(S)=5|S|-2|E(S)|\leq2|S|=2s+2<s+9$, a contradiction. Thus $|S|= s+3$. Let $R$ be the set of vertices $u$ in $S$ with $d_{G[S]}(u)=3$. By Lemma \ref{5rec}(b), $E(R)=\emptyset$. Noting $3s+9\leq\sum_{u\in S}d_{G[S]}(u)=2|E(S)|=5|S|-\lambda_{s}(G)=4s+6$, we have $|R|\geq|S|-(4s+6-3s-9)=6$. Since $s=5$ or 6, $d_{G[S]}(R)=3|R|\geq18>5(s-3)\geq d_{G[S]}(S\backslash R)$, a contradiction.
\end{proof}

\begin{Lemma}\label{no4fc} Let $G$ be a bicritical graph. If $G$ is not
$4$-factor-critical, then there is a subset $X\subseteq V(G)$ with
$|X|\geq4$ such that $c_{0}(G-X)=|X|-2$ and every component of $G-X$
is factor-critical.
\end{Lemma}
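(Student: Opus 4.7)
First, I will extract the numerical consequences of the hypotheses. Since $G$ is bicritical, $|V(G)|$ is even, and bicriticality coincides with $2$-factor-criticality, so Theorem \ref{k-fc} gives the bicritical upper bound $c_{0}(G-Y)\leq|Y|-2$ for every $Y\subseteq V(G)$ with $|Y|\geq 2$. The failure of $4$-factor-criticality produces, again by Theorem \ref{k-fc}, some $Y_{0}$ with $|Y_{0}|\geq 4$ and $c_{0}(G-Y_{0})>|Y_{0}|-4$. Because $|V(G)|$ is even, the parity congruence $c_{0}(G-Y_{0})\equiv|Y_{0}|\pmod{2}$ upgrades this strict inequality to $c_{0}(G-Y_{0})\geq|Y_{0}|-2$; combined with the bicritical bound, this forces $c_{0}(G-Y_{0})=|Y_{0}|-2$.

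The plan is then to choose, among all $X\subseteq V(G)$ with $|X|\geq 4$ and $c_{0}(G-X)=|X|-2$, one with $|X|$ as large as possible, and to show that every component of $G-X$ is factor-critical. Arguing by contradiction, I suppose some component $C$ of $G-X$ fails to be factor-critical; then either $|C|$ is even, or $|C|$ is odd but $C-v$ has no perfect matching for some $v\in C$. In the first case I set $X':=X\cup\{v\}$ for an arbitrary $v\in C$; since $|C-v|$ is odd, $C-v$ contributes at least one odd component to $G-X'$, giving $c_{0}(G-X')\geq c_{0}(G-X)+1=|X'|-2$. The bicritical upper bound forces equality, and $|X'|>|X|$ contradicts the maximality of $|X|$.

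In the remaining case, I apply Tutte's theorem to the even-order graph $C-v$ to obtain $W\subseteq V(C)\setminus\{v\}$ with $c_{0}((C-v)-W)>|W|$, and a parity check (the order of $C-v$ is even) upgrades this to $c_{0}((C-v)-W)\geq|W|+2$. Setting $X':=X\cup\{v\}\cup W$, the odd components of $G-X$ distinct from $C$ still contribute $|X|-3$ odd components to $G-X'$, while the pieces of $(C-v)-W$ contribute at least $|W|+2$ more, so $c_{0}(G-X')\geq(|X|-3)+(|W|+2)=|X'|-2$. Once again the bicritical bound forces equality, and $|X'|>|X|$ contradicts the choice of $X$. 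The one subtle point I expect is the parity step that lifts the Tutte strict inequality to a gap of two; everything else is a straightforward enlargement argument driven by the bicritical upper bound on $c_{0}(G-\cdot)$.
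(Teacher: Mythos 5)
Your proposal is correct, but it reaches the conclusion by a genuinely different route than the paper. The paper starts from a single $4$-set $X_{1}$ witnessing the failure of $4$-factor-criticality and applies Theorem \ref{s} (already quoted in the preliminaries) to $G-X_{1}$, obtaining a set $X_{2}$ matchable to $\mathscr{C}_{G-X_{1}-X_{2}}$ with all components factor-critical; setting $X=X_{1}\cup X_{2}$, factor-criticality of the components comes for free, and only the count $c_{0}(G-X)=|X|-2$ needs the bicritical bound from Theorem \ref{k-fc} together with the parity observation. You instead secure the count first (Theorem \ref{k-fc} for $p=4$, parity, and the bicritical upper bound force some $Y_{0}$ with $c_{0}(G-Y_{0})=|Y_{0}|-2$), then take $X$ with $|X|$ maximum among all sets satisfying $|X|\geq4$ and $c_{0}(G-X)=|X|-2$, and force factor-criticality of each component by an enlargement argument: an even component, or an odd component $C$ with $C-v$ having no perfect matching, yields via Tutte's theorem (plus the same parity lift) a strictly larger candidate set, contradicting maximality. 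Your counts check out in both cases ($|X|-2$ surviving odd components when $C$ is even, $|X|-3$ plus at least $|W|+2$ when $C$ is odd), and the bicritical bound correctly upgrades the inequality to the equality needed for $X'$ to be a candidate. In effect you re-derive, by the classical extremal argument, exactly the piece of the Gallai--Edmonds-type Theorem \ref{s} that the paper simply invokes; your version is self-contained modulo Tutte's theorem, while the paper's is shorter given that Theorem \ref{s} is already available.
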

\begin{proof} Since $G$ is not 4-factor-critical, there is a set
$X_{1}$ of four vertices of $G$ such that $G-X_{1}$ has no
perfect matchings. By Theorem \ref{s}, $G-X_{1}$ has a vertex set
$X_{2}$ such that $X_{2}$ is matchable to $\mathscr{C}_{G-X_{1}-X_{2}}$ and
every component of $G-X_{1}-X_{2}$ is factor-critical. Set
$X=X_{1}\cup X_{2}$. Then $c_{0}(G-X)=|\mathscr{C}_{G-X}|>|X_{2}|=|X|-4$. Since $G$ is bicritical, we have
$c_{0}(G-X)\leq|X|-2$ by Theorem \ref{k-fc}. Hence
$|X|-4<c_{0}(G-X) \leq|X|-2$. Noting that $c_{0}(G-X)$ and $|X|$ have the
same parity, we have $c_{0}(G-X)=|X|-2$.
\end{proof}

In the rest of this section, we always suppose that $G$ is a
connected non-bipartite vertex-transitive graph of degree $k\geq5$
and even order, but $G$ is not 4-factor-critical. Also we always use
the following notation. Let $X$ be a subset of $V(G)$ with
$|X|\geq4$ such that $c_{0}(G-X)=|X|-2$ and every component of $G-X$
is factor-critical. By Theorem \ref{vtg} and Lemma \ref{no4fc}, such
subset $X$ exists. Let $H=H_{1}$, $H_{2}$, $\dots$, $H_{p}$ be the
nontrivial components of $G-X$. For a positive integer $m$, let
$[m]$ denote the set $\{1,2,\dots,m\}$.

\begin{Lemma}\label{component} We have $p\geq1$. Furthermore, if $g(G)>3$, then\\
(\textsl{a}) $p=1$ if $\lambda_{5}(G)>2k$,\\
(b) $|X|\geq7$ and $|V(H)|\geq9$ if $\lambda_{5}(G)>4k-8$ and $5\leq k\leq6$, and\\
(c) $|X|\geq10$ and $|V(H)|\geq15$ if $\lambda_{6}(G)\geq14$ and
$k=5$.
\end{Lemma}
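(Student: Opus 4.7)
The plan hinges on a single bookkeeping identity. Since every factor-critical component of $G-X$ has odd order, $c_0(G-X)=|X|-2$ splits as $t+p=|X|-2$, where $t$ is the number of trivial components of $G-X$. Writing $d_G(X)$ two ways yields
\[
\sum_{i=1}^{p} d_G(H_i) \;=\; \bigl(k|X|-2|E(X)|\bigr)-tk \;=\; kp+2k-2|E(X)|.
\]
For the claim $p\geq 1$, I would suppose $p=0$; then $V(G)\setminus X$ is independent of size $(|V(G)|-2)/2$, and Lemma~\ref{independent} gives $k\leq 4$ when $g_0(G)\geq 5$, or $|V(G)|\leq 6$ when $g_0(G)=3$. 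Both clash with the standing hypotheses: the former with $k\geq 5$, and in the latter $|V(G)|=6$ and $k=5$ force $G=K_6$, which is $4$-factor-critical.

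For (a), triangle-freeness plus factor-criticality forces $|V(H_i)|\geq 5$ (the smallest triangle-free factor-critical graph with more than one vertex is $C_5$); Lemma~\ref{5rec}(a) with $s=5$ therefore gives $d_G(H_i)\geq\lambda_5(G)>2k$ for each $i$. If $p\geq 2$, summing and comparing with the identity yields
\[
2kp \;<\; \sum_{i=1}^{p} d_G(H_i) \;=\; kp+2k-2|E(X)| \;\leq\; kp+2k,
\]
a contradiction; so $p=1$.

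For (b), $p=1$ gives $d_G(H)=3k-2|E(X)|\geq\lambda_5(G)>4k-8$, forcing $|E(X)|\leq 1$ for $k=5$ and $|E(X)|=0$ for $k=6$. Since every trivial component has all $k$ of its neighbors in $X$, $|X|\geq k$. The remaining small cases I would handle individually: when $k=6$ and $|X|=6$, the three trivial components share the neighborhood $X$, contradicting Lemma~\ref{K36}; when $k=5$ and $|X|=5$, the two trivial components are twins, and vertex-transitivity propagates twinhood to all of $G$, so twin classes have constant size $r\mid k=5$, forcing $r=5$ and $G\cong K_{5,5}$ (bipartite); when $k=5$ and $|X|=6$, the ``missed vertex'' map $f\colon Y\to X$ is either non-injective (giving twins again, hence $G\cong K_{5,5}$ on $10<|V(G)|$ vertices) or injective, in which case triangle-freeness forces $X$ to be independent and the $3$-subsets $N(v)\cap X$ (for $v\in V(H)$) to satisfy an incompatible disjointness pattern when propagated around a shortest odd cycle of $H$. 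The companion bound $|V(H)|\geq 9$ then eliminates $|V(H)|\in\{5,7\}$ from the identity $|E(H)|=(k|V(H)|-d_G(H))/2\geq k(|V(H)|-3)/2$ against Mantel's bound $|E(H)|\leq|V(H)|^2/4$; the sole extremal configuration, $K_{3,4}$ at $|V(H)|=7$, $k=6$, is bipartite and hence not factor-critical, while the remaining small cases succumb to pigeonhole bounds on the $3$-subsets $N(v)\cap X$.

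For (c), the hypothesis $\lambda_6(G)\geq 14$ plays the same role as $\lambda_5>4k-8$ did: for $|V(H_i)|\geq 7$, Lemma~\ref{5rec}(a) with $s=6$ gives $d_G(H_i)\geq 14$, and the exceptional case $|V(H_i)|=5$ (so $H_i=C_5$) yields $d_G(H_i)=5\cdot 5-10=15$ directly. The identity then forces $p=1$ and $|E(X)|=0$, after which the case analysis for $|X|\in\{5,\dots,9\}$ and $|V(H)|\in\{5,7,\dots,13\}$ mirrors part (b) with more numerical headroom. The hardest step throughout is the $k=5$, $|X|=6$ subcase of (b): no genuine twin structure exists there, so one must exploit vertex-transitivity and triangle-freeness simultaneously through a delicate local analysis of how a shortest odd cycle of $H$ can attach to $X$.
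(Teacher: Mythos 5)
Your treatment of $p\geq 1$ and of part (a) is correct and essentially the paper's argument (Lemma~\ref{independent} plus the edge-count identity), and your $k=6$, $|X|=6$ and $k=5$, $|X|=5$ cases of (b) are fine. But from there on the proposal has genuine gaps. First, the $k=5$, $|X|=6$ subcase of (b), which you defer to ``a delicate local analysis,'' is never actually argued, and your proposed mechanism would not close it; moreover it has a short proof you missed: the three trivial vertices have at least three common neighbours in $X$, so their union with these neighbours is a $6$-set $A$ containing a spanning $K_{3,3}$, whence $d_G(A)\leq 30-18=12<\lambda_5(G)$, contradicting Lemma~\ref{5rec}(a) (this is exactly how the paper kills $|X|\leq 6$, via ``no $K_{3,3}$ or $K_{2,5}$ subgraphs''). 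Second, your derivation of $|V(H)|\geq 9$ does not go through for $k=5$ when $E(X)=\emptyset$, i.e. $d_G(H)=15$: the Mantel-type count excludes neither $|V(H)|=5$ (for $H=C_5$ one has $d_G(H)=15$, perfectly consistent) nor $|V(H)|=7$ (the blow-up of $C_5$ with parts $2,2,1,1,1$ is triangle-free, factor-critical, has $10$ edges and $d_G(H)=15$). The paper's tool here is Lemma~\ref{girthX}: with $X$ independent and $|X|-3$ trivial components one gets $g_0(G)\geq \tfrac{2|X|}{3}+1\geq 7$, which rules these out; you never invoke it, and your substitute (``incompatible disjointness pattern'' of the sets $N(v)\cap X$ around a short odd cycle) fails as soon as $|X|\geq 8$, since five $3$-subsets of an $8$-set can be placed cyclically with consecutive ones disjoint.

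Third, and most seriously, part (c) is not proved. Saying the analysis for $|X|\in\{7,8,9\}$ ``mirrors part (b) with more numerical headroom'' is not accurate: none of your part (b) devices (twins, Lemma~\ref{K36}, common neighbourhoods of trivial vertices) applies there, and the paper needs a substantial new argument — first extracting from $\lambda_6(G)\geq 14$ that $E(X)=\emptyset$, $|\nabla(u)\cap\nabla(H)|\leq 3$ for all $u$, with at most one vertex of each of $X$ and $V(H)$ attaining $3$, and then, for $|X|=7$ and $8\leq|X|\leq 9$, constructing explicit automorphisms carrying a vertex of $Y$ to a vertex of $X$ and tracking where $\varphi(Y)$ and $\varphi(X)$ can land to force a forbidden $K_{3,3}$/$K_{2,5}$ or an excess of vertices with $|\nabla(v)\cap\nabla(H)|=3$. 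Likewise $|V(H)|\geq 15$ is not a matter of numerical headroom: it rests on $g_0(G)\geq 9$ (Lemma~\ref{girthX}, available only after $|X|\geq 10$ is established) together with the observation that a vertex off a shortest odd cycle $C$ of $H$ has at most two neighbours on $C$, which with $5|V(H)|-2|E(H)|=15$ forces $|V(H)|\geq 15$. None of this, nor any workable replacement for it, appears in your proposal, so parts (b) and (c) remain open as written.
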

\begin{proof} If $p=0$, then $|V(G)|=2|X|-2\geq2k-2\geq8$ and
$\alpha(G)\geq|\overline{X}|=\frac{1}{2}|V(G)|-1>\textrm{max}\{\frac{1}{3}|V(G)|,
\frac{1}{2}|V(G)|-\frac{k}{4}\}$, which contradicts Lemma \ref{independent}. Thus $p\geq1$.

Next we suppose $g(G)>3$. For each $i\in[p]$, we have $|V(H_{i})|\geq5$ as $H_{i}$ is triangle-free and factor-critical.

Suppose $\lambda_{5}(G)>2k$. By Lemma \ref{5rec}(a),
$d(H_{i})\geq\lambda_{5}(G)$ for each $i\in[p]$. We have
$2pk<p\lambda_{5}(G)\leq\sum_{i=1}^{p}d(H_{i})=d(X)-k(c_{0}(G-X)-p)\leq
k(p+2)$, which implies $p<2$. Thus $p=1$. It proofs (a).

Suppose $\lambda_{5}(G)>4k-8$ and $5\leq k\leq6$. We know $p=1$ by
(a). Assume $k=6$. Noting that $G$ is non-bipartite, it follows by
Lemma \ref{K36} that $|X|\geq7$. As $d(H)\leq3k$ and $H$ is
triangle-free and factor-critical, we have $|V(H)|\geq9$. Assume
next $k=5$. Note $|V(G)|=|V(H)|+2|X|-3\geq12$. By Lemma
\ref{5rec}(a), $d(A)\geq\lambda_{5}(G)>12$ for every subset
$A\subseteq V(G)$ with $|A|=6$, which implies that $G$ has no
subgraphs isomorphic to the complete bipartite graph $K_{3,3}$. By
the vertex-transitivity of $G$, it follows that $G$ has also no
subgraphs isomorphic to $K_{2,5}$. So $|X|\geq7$. If
$E(X)=\emptyset$, then $g_{0}(G)\geq7$ by Lemma \ref{girthX}, which
implies $|V(H)|\geq13$. If $E(X)\neq\emptyset$, then $d(H)=13$,
which implies $|V(H)|\geq9$. Hence the statement (b) holds.

Now we suppose $\lambda_{6}(G)\geq14$ and $k=5$. Then
$\lambda_{5}(G)\geq$min$\{\lambda_{6}(G),5k-12\}=13$. We know $p=1$
by (a). By the above argument, we know $|X|\geq7$, $|V(H)|\geq9$ and
that $G$ has no subgraphs isomorphic to $K_{2,5}$ or $K_{3,3}$. By
Lemma \ref{5rec}(a), $d(V(H)\cup A)\geq\lambda_{6}(G)$ and
$d(V(H)\backslash A)\geq\lambda_{6}(G)$ for every subset $A\subseteq
V(G)$ with $|A|\leq2$. It implies that $E(X)=\emptyset$,
$|\nabla(u)\cap\nabla(H)|\leq3$ for each $u\in V(G)$ and each of $X$
and $V(H)$ has at most one vertex $v$ with
$|\nabla(v)\cap\nabla(H)|=3$. Set $Y=\overline{V(H)\cup X}$.

Suppose $|X|=7$. Then $X$ has one vertex $u_{1}$ with 3 neighbors in $V(H)$ and other vertices in $X$ has exactly two neighbors in $V(H)$. Choose a vertex $u_{2}\in X\backslash\{u_{1}\}$ and a vertex $u_{3}\in Y\backslash N(u_{1})$. Since $G$ is vertex-transitive, there is an automorphism $\varphi_{1}$ of $G$ such that $\varphi_{1}(u_{3})=u_{2}$. Noting that $|N(v)\cap N(u_{3})|\geq3$ for each $v\in Y$, we have $\varphi_{1}(Y)\subseteq X$, which implies $|\nabla(v)\cap\nabla(H)|=3$ for each
$v\in N(u_{2})\cap V(H)$, a contradiction.

Suppose $8\leq|X|\leq9$. Then there are two vertices $u_{4}$ and $u_{5}$ in $X$ with
$|N(u_{4})\cap V(H)|=2$ and $|N(u_{5})\cap V(H)|\leq1$. Since $G$ is vertex-transitive, there is an automorphism $\varphi_{2}$ of $G$ such that $\varphi_{2}(u_{5})=u_{4}$. Then $\varphi_{2}(Y)\cap V(H)\neq\emptyset$ and $|\varphi_{2}(Y)\cap Y|\geq2$. As $G$ has no subgraphs isomorphic to $K_{2,5}$ or $K_{3,3}$, it follows that $|\varphi_{2}(X)\cap X|\geq6$. Hence $\varphi_{2}(Y)\subseteq V(H)\cup Y$ and $\varphi_{2}(X)\subseteq V(H)\cup X$. Noting that $|\nabla(u)\cap\nabla(H)|\leq3$ and $N(u)\subseteq\varphi_{2}(X)$ for each $u\in\varphi_{2}(Y)\cap V(H)$, we have $|\varphi_{2}(X)\cap V(H)|\geq2$. Notice that each of $X$ and $V(H)$ has at most one vertex $v$ with $|\nabla(v)\cap\nabla(H)|=3$. We know $d_{G[\varphi_{2}(X\cup Y)]}(\varphi_{2}(X)\cap V(H))\geq3$, which implies $|\varphi_{2}(Y)\cap V(H)|\geq3$. It follows that $N_{H}(\varphi_{2}(Y)\cap V(H))\geq3$. Now we have $|\varphi_{2}(X)\cap X|=6$ and $|\varphi_{2}(X)\cap V(H)|=3$ as $|\varphi_{2}(X)|=|X|\leq9$. It follows that $G[\varphi_{2}(X\cup Y)\cap V(H)]$ contains a subgraph isomorphic to $K_{3,3}$ if $|\varphi_{2}(Y)\cap V(H)|\geq4$ and $G[\varphi_{2}(X\cup Y)\backslash V(H)]$ contains a subgraph isomorphic to $K_{3,3}$ otherwise, a contradiction.

Thus $|X|\geq10$. Then $g_{0}(G)\geq9$ by Lemma \ref{girthX}. Let
$C$ be a $g_{0}(H)$-cycle of $H$. Then $g_{0}(H)\geq g_{0}(G)\geq9$
and $|N_{H}(v)\cap V(C)|\leq2$ for each $v\in V(H)\backslash V(C)$.
Noting $15=d(V(H))=5|V(H)|-2|E(H)|$, it is easy to verify
$|V(H)|\geq15$. It proofs (c).
\end{proof}

\begin{Lemma}\label{6to7rec} Suppose $k=5$, $\lambda_{6}(G)=\lambda_{7}(G)=12$ and $g(G)>3$. For a $\lambda_{7}$-atom $S$ of $G$, we have that $S$ is an imprimitive block of $G$.
\end{Lemma}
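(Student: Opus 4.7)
The plan is to argue by contradiction. Suppose $S$ is not an imprimitive block of $G$, so there exists an automorphism $\varphi$ of $G$ with $T:=\varphi(S)$ satisfying $T\ne S$ and $S\cap T\ne\emptyset$. Since $\varphi$ is an automorphism, $T$ is also a $\lambda_7$-atom with $d_G(T)=12=d_G(S)$ and $|T|=|S|$.

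Applying Lemma \ref{5rec}(c) at once gives $|S\cap T|\le 6$, $|S\setminus T|\le 6$, and the two submodular inequalities
\[
d_G(S\cap T)+d_G(S\cup T)\le 24,\qquad d_G(S\setminus T)+d_G(T\setminus S)\le 24.
\]
Hence $|S|=|S\cap T|+|S\setminus T|\le 12$, and combined with $|S|\ge 7$ and the parity condition ($d_G(S)=5|S|-2|E(S)|=12$ in a $5$-regular graph forces $|S|$ even), we obtain $|S|\in\{8,10,12\}$. Lemma \ref{5rec}(b) applied to singletons further yields $\delta(G[S])\ge 3$, pinning down $|E(S)|=(5|S|-12)/2$ in each case.

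The key tool is to combine the triangle-free extremal bound of Lemma \ref{triangle}, which gives $d_G(A)\ge 5|A|-2\lfloor|A|^{2}/4\rfloor$, namely $d_G(A)\ge 12$ when $|A|\in\{4,6\}$ and $d_G(A)\ge 13$ when $|A|=5$, with the $\lambda_6$-bound from Lemma \ref{5rec}(a), which gives $d_G(A)\ge\lambda_6(G)=12$ whenever $|A|,|\overline{A}|\ge 6$. Substituting these lower bounds into the two submodular inequalities immediately rules out every sub-case in which $|S\cap T|=5$ or $|S\setminus T|=5$ (the corresponding lower-bound sum exceeds $24$). The surviving sub-cases are therefore $(|S|,|S\cap T|)\in\{(8,2),(8,4),(8,6),(10,4),(10,6),(12,6)\}$; in each, both submodular inequalities collapse to equalities, and Mantel's extremal case identifies $G[S\cap T]$, $G[S\setminus T]$, $G[T\setminus S]$ (whenever the relevant sizes lie in $\{4,6\}$) as the complete bipartite graphs $K_{2,2}$ or $K_{3,3}$.

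The final step is a rigid cross-block edge-count in the four-part partition $\{A,B,C,D\}:=\{S\cap T,\,S\setminus T,\,T\setminus S,\,\overline{S\cup T}\}$. Using $d_G(S)=d_G(T)=12$ together with the equalities $d_G(S\cap T)=d_G(S\cup T)=d_G(S\setminus T)=d_G(T\setminus S)=12$ forced above, one solves the resulting linear system in the six cross-block edge counts and obtains $|[A,D]|=|[B,C]|=0$ together with matched ``positive'' counts $|[A,B]|=|[A,C]|=|[B,D]|=|[C,D]|$ in the remaining pairs. Combining this rigid cross-block pattern with the complete bipartite structure on each block aligns the four bipartitions into a global $2$-colouring of $V(G)$, contradicting the assumed non-bipartiteness of $G$. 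The main obstacle will be the detailed edge-book-keeping in the borderline sub-cases where a piece (typically $G[\overline{S\cup T}]$ or $G[S\cap T]$) has size outside $\{4,6\}$ and Mantel's identification is not immediate; in those sub-cases one must argue via the trivial upper bound $d_G(\overline{S\cup T})\le 5|\overline{S\cup T}|$, together with Lemma \ref{5rec}(b) applied to $A=S\setminus T$, to force either the same bipartite alignment or a direct contradiction with $\delta(G[S])\ge 3$.
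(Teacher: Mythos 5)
There is a genuine gap, and it is fatal to the proposed route. Your reduction to $|S|\in\{8,10,12\}$, the bound $\delta(G[S])\ge 3$, and the use of submodularity plus the Mantel/$\lambda$-bounds to restrict $|S\cap T|$ are in the spirit of the paper (this is essentially Lemma \ref{5rec}(b),(c) together with the paper's Claim 3 in Case 2). But your final step does not work: knowing that $|[S\cap T,\overline{S\cup T}]|=|[S\setminus T,T\setminus S]|=0$ and that $G[S\cap T]$, $G[S\setminus T]$, $G[T\setminus S]$ are complete bipartite tells you nothing about the internal structure of $D=\overline{S\cup T}$, which is typically almost all of $G$ and certainly contains odd cycles (e.g.\ inside the factor-critical component $H$). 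Cross-block edge \emph{counts} cannot ``align bipartitions'' of blocks whose bipartitions are not even canonically matched across the cuts, and there is no 2-colouring of $D$ to extend; so no contradiction with non-bipartiteness is obtained from a single pair $(S,T)$. Two of your intermediate claims are also not forced: in the sub-case $|S|=8$, $|S\cap T|=2$ only one of the two submodular inequalities collapses to equality (a 2-set only satisfies $d_G(A)\ge 8$), and the linear system you solve yields only $|[A,B]|=|[C,D]|$ and $|[A,C]|=|[B,D]|$, not all four counts equal.

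The deeper point is that the lemma cannot be proved from one overlapping pair of atoms alone; the paper's proof is global. After the same initial reduction it (i) uses many automorphisms, via vertex-transitivity, to show every vertex lies in exactly two $\lambda_{7}$-atoms, so that for $|S|=12$ the $K_{3,3}$-pieces partition $V(G)$ into a cyclic chain (and only the one configuration $G_{3}$ is excluded by non-bipartiteness); (ii) in the case $|S|=8$ it pins down $G[S]$ as a specific 14-edge graph and argues through 4-cycle and neighbourhood conditions; and (iii) in every case it invokes the standing structure coming from the failure of 4-factor-criticality — the set $X$ with $c_{0}(G-X)=|X|-2$, $p=1$, $d_{G}(H)\in\{13,15\}$, $|E(X)|\le 1$ — to reach the final contradictions. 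Your proposal never touches this structure, and without it (or some substitute global input) the surviving configurations you list are not contradictory.
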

\begin{proof} Suppose, to the contrary, that $S$ is not an imprimitive block of $G$. Then there is an automorphism $\varphi_{1}$ of $G$ such that $\varphi_{1}(S)\neq S$ and $\varphi_{1}(S)\cap S\neq\emptyset$. Set $T=\varphi_{1}(S)$. By Lemma \ref{5rec}(c), we have $|S\cap T|\leq6$ and $|S\backslash T|\leq6$, which implies $|S|\leq12$. Noting that $12=d(S)=5|S|-2|E(S)|$, $|S|$ is an even integer. By Lemma \ref{5rec}(b), $\delta(G[S])\geq3$. For each $u\in\overline{S}$, we have $d_{G}(S\cup\{u\})\geq\lambda_{6}(G)$ by Lemma \ref{5rec}(a), which implies $|N_{G}(u)\cap S|\leq2$. As $\lambda_{6}(G)\geq\lambda_{5}(G)\geq\lambda_{4}(G)\geq\textrm{min}\{4k-8,5k-12,\lambda_{6}(G)\}=12$, we have $\lambda_{5}(G)=\lambda_{4}(G)=12$. By Lemma \ref{component}, $p=1$. By Lemma \ref{5rec}(a), we have $d_{G}(H)\geq\lambda_{5}(G)=12$. Then either $d_{G}(H)=13$ and $|E(X)|=1$, or $d_{G}(H)=15$ and $E(X)=\emptyset$.

\begin{figure}[h]
\begin{center}
\includegraphics[scale=0.7]{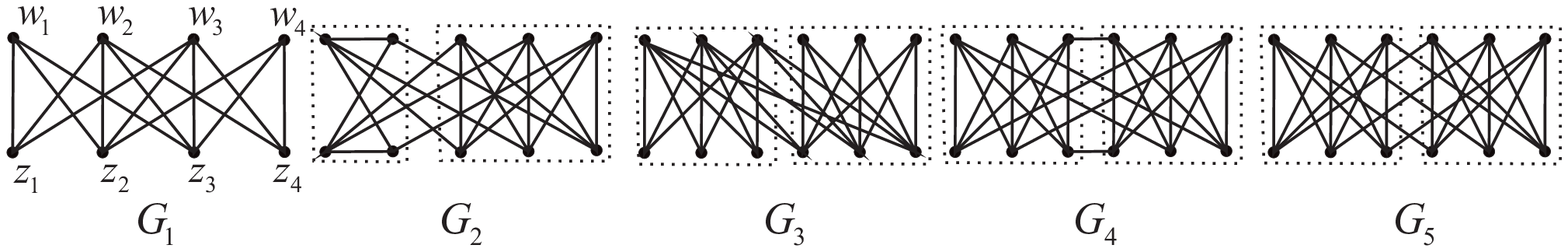}
\\{\small{Figure 1. Some possible cases of $G[S]$. In each $G_{i}$, $2\leq i\leq5$, the two graphs in the virtual boxes correspond to $G[S\cap T]$ and $G[S\backslash T]$.}}
\end{center}
\end{figure}

\textbf{Case 1.} $|S|=8$.

We have $|E(S)|=\frac{1}{2}(5|S|-\lambda_{6}(G))=14$. It is easy to verify that $G[S]$ is isomorphic to $G_{1}$ in Figure 1. Label $G[S]$ as in $G_{1}$ and set $W=\{w_{1},w_{2},w_{3},w_{4}\}$. As $|N_{G}(u)\cap S|\leq2$ for each $u\in\overline{S}$, $G$ has no vertex $v$ different from $w_{1}$ such that $N_{G}(v)=N_{G}(w_{1})$. Hence $G$ has no subgraphs isomorphic to $K_{2,5}$ by the vertex-transitivity of $G$.

\vskip 2mm
\textbf{Claim 1.} \emph{Each edge in $G$ is contained in a $4$-cycle of $G$. }
\vskip 2mm

Suppose that $G$ has an edge contained in no 4-cycles of $G$. Since
$G$ is vertex-transitive, each vertex in $G$ is incident with an
edge contained in no 4-cycles of $G$ and there is an automorphism
$\varphi_{2}$ of $G$ such that $\varphi_{2}(w_{1})=w_{2}$. As each
edge in $G[S]$ is contained in a 4-cycle, we have
$\varphi_{2}(N_{G[S]}(w_{1}))\subseteq N_{G[S]}(w_{2})$ and
$N_{G[S]}(\varphi_{2}(z_{i}))\subseteq\varphi_{1}(S)$ for each
$i\in\{2,3\}$. It implies $|S\cap\varphi_{2}(S)|\geq7$. On the other
hand, noting $\varphi_{2}(S)\neq S$, we have
$|S\cap\varphi_{2}(S)|\leq6$ by Lemma \ref{5rec}(c), a
contradiction. Thus Claim 1 holds.

\vskip 2mm
\textbf{Claim 2.} \emph{For any vertex $x\in V(G)$ with $2\leq|\nabla(x)\cap\nabla(H)|\leq3$ such that $d_{G[X]}(u)=0$ for each $u\in(\{x\}\cup N_{G}(x))\cap X$, there is a subset $A\subseteq N_{G}(x)$ with $|A|\geq|\nabla(x)\cap\nabla(H)|-1$ and a vertex $y\in V(G)\backslash\{x\}$ such that $\{xu,yu\}\subseteq\nabla(H)$ and $|\nabla(u)\cap\nabla(H)|\geq3$ for each $u\in A$.}
\vskip 2mm

Since $G$ is vertex-transitive, there is an automorphism $\varphi_{3}$ of $G$ such that $\varphi_{3}(w_{2})=x$. Let $T_{1}$ be one of $X$ and $V(H)$ such that $x\in T_{1}$, and let $T_{2}$ be the other of $X$ and $V(H)$. Then $\varphi_{3}(w_{3})\in T_{1}$ and $|\varphi_{3}(N_{G[S]}(w_{2}))\cap T_{2}|\geq|\nabla(x)\cap\nabla(H)|-1$. If $|\varphi_{3}(N_{G[S]}(w_{2}))\cap T_{2}|\leq2$ or $\varphi_{3}(W)\subseteq T_{1}$, then we choose $A$ to be $\varphi_{3}(N_{G[S]}(w_{2}))\cap T_{2}$. If $|\varphi_{3}(N_{G[S]}(w_{2}))\cap T_{2}|=3$ and $\varphi_{3}(W)\backslash T_{1}\neq\emptyset$, then $|\varphi_{3}(W)\cap T_{1}|=3$ and $\{\varphi_{3}(z_{2}),\varphi_{3}(z_{3})\}\subseteq T_{2}$. In the second case, we choose $A$ to be $\{\varphi_{3}(z_{2}),\varphi_{3}(z_{3})\}$. Then $A$ and $\varphi_{3}(w_{3})$ are a subset and a vertex which satisfy the condition. Thus Claim 2 holds.

\vskip 2mm

\textbf{Subcase 1.1.} Suppose first that $d_{G}(H)=13$.

Let $x_{1}x_{2}$ be the edge in $E(X)$. We know $|X|\geq6$ and
$|V(H)|\geq7$. By Lemma \ref{5rec}(a), $d_{G}(V(H)\cup
A)\geq\lambda_{4}(G)$ and $d_{G}(V(H)\backslash
A)\geq\lambda_{4}(G)$ for each subset $A\subseteq V(G)$ with
$|A|\leq2$, which implies that $|\nabla(u)\cap\nabla(H)|\leq3$ for
each $u\in V(G)$ and each of $X$ and $V(H)$ has at most one vertex
$v$ with $|\nabla(v)\cap\nabla(H)|=3$. Hence it follows by Claim 2
that $|\nabla(u)\cap\nabla(H)|\leq2$ for each $u\in
X\backslash\{x_{1},x_{2}\}$. By Claim 2 again, it follows that
$|\nabla(u)\cap\nabla(H)|\leq1$ for each $u\in V(H)\backslash
N_{G}(\{x_{1},x_{2}\})$.

We claim $|\nabla(u)\cap\nabla(H)|\leq2$ for each $u\in N_{G}(\{x_{1},x_{2}\})\cap V(H)$. Otherwise, suppose that there is a vertex $u_{0}\in N_{G}(\{x_{1},x_{2}\})\cap V(H)$ with $|\nabla(u_{0})\cap\nabla(H)|=3$. Since $G$ is vertex-transitive, there is an automorphism $\varphi_{4}$ of $G$ such that $\varphi_{4}(w_{2})=u_{0}$. It implies that there is a vertex $u_{1}\in\varphi_{4}(N_{G[S]}(w_{2})\cap(X\backslash\{x_{1},x_{2}\})$ such that $|\nabla(u_{1})\cap\nabla(H)|=3$, a contradiction.

Thus it follows by Claim 2 that $|\nabla(u)\cap\nabla(H)|\leq1$ for each $u\in X\backslash\{x_{1},x_{2}\}$. Noting $|N_{G}(\{x_{1},x_{2}\})\cap V(H)|\leq5$, we have $|\nabla(N_{G}(\{x_{1},x_{2}\})\cap V(H))\cap\nabla(H)|\leq10$ by the claim in the previous paragraph. Hence there is an edge $x_{3}x_{4}\in\nabla(H)$ such that $x_{3}\in X\backslash\{x_{1},x_{2}\}$ and $|\nabla(x_{3})\cap\nabla(H)|=|\nabla(x_{4})\cap\nabla(H)|=1$. Then $x_{3}x_{4}$ is contained in no 4-cycles of $G$, contradicting Claim 1. Hence Subcase 1.1 cannot occur.

\textbf{Subcase 1.2.} Now suppose $d_{G}(H)=15$.

Notice that $G$ has no subgraphs isomorphic to $K_{2,5}$. We know $|X|\geq6$. Next we show $|V(H)|\geq9$. Let $O_{i}$ be the set of vertices $u$ in $G$ with $|\nabla(u)\cap\nabla(H)|=i$ for $1\leq i\leq5$. If $|X|\geq7$, then $g_{0}(G)\geq7$ by Lemma \ref{girthX}, which implies $|V(H)|\geq13$. Then we assume $|X|=6$. As $G$ has no subgraphs isomorphic to $K_{2,5}$, we have $|O_{3}\cap X|=3$ and $|O_{2}\cap X|=3$. Noting $g(G)>3$, it follows that $|V(H)|\neq5$. By Claim 2, $|O_{3}\cap V(H)|\geq2$, which implies $|V(H)|\neq7$. Hence $|V(H)|\geq9$.

By Lemma \ref{5rec}(a), $d_{G}(V(H)\cup A)\geq\lambda_{4}(G)$ and
$d_{G}(V(H)\backslash A)\geq\lambda_{4}(G)$ for each subset
$A\subseteq V(G)$ with $|A|\leq4$. It implies $O_{5}=\emptyset$,
$|O_{4}\cap X|\leq1$, $|O_{3}\cap X|\leq3$, $|O_{3}\cap V(H)|\leq3$
and $|O_{4}\cap X|\cdot|O_{3}\cap X|=0$.

We claim $O_{4}=\emptyset$. Otherwise, suppose $O_{4}\neq\emptyset$. Noting that $\delta(H)\geq2$ as $H$ is factor-critical, we have $O_{4}\subseteq X$. Now we know $|O_{4}|=1$ and $O_{3}\cap X=\emptyset$. It follows by Claim 2 that $O_{3}\cap V(H)=\emptyset$ and $O_{2}\subseteq N_{G}(O_{4})$. Noting $|\nabla(N_{G}(O_{4})\cap V(H))|\leq8$, there is an edge $x_{5}x_{6}\in\nabla(H)$ with $\{x_{5},x_{6}\}\subseteq O_{1}$. Then $x_{5}x_{6}$ is contained in no 4-cycles of $G$, contradicting Claim 1.

Let $F_{1}$ be the subgraph of $G$ with vertex set $\bigcup_{i=1}^{3}O_{i}$ and edge set
$\nabla(H)$ and let $F_{2}$ be the subgraph of $F_{1}$ which is induced by $O_{3}$. By Claim 2,
$\delta(F_{2})\geq2$. Hence $F_{2}$ is connected. Then $F_{1}$ is connected by Claims 1 and 2. Let $t$ be the number of vertices $u$ in $F_{2}$ with $d_{F_{2}}(u)=2$. We have $15=|E(F_{1})|\leq|E(F_{2})|+2t=6|O_{3}|-3|E(F_{2})|$ by Claim 2. It follows that $|O_{3}|=6$ and $6\leq|E(F_{2})|\leq7$.

Assume $|E(F_{2})|=6$. Then $F_{2}$ is a 6-cycle. For each $u\in O_{3}\cap X$, there is a vertex $y_{u}\in X\backslash O_{3}$ such that $N_{F_{2}}(u)\subseteq N_{G}(y_{u})$ by Claim 2. It implies that there is a vertex $y\in X\backslash O_{3}$ such that $O_{3}\cap V(H)\subseteq N_{G}(y)$, which contradicts $|O_{3}\cap X|\leq3$.

Assume $|E(F_{2})|=7$. Noting $|E(F_{1})\backslash E(F_{2})|=8$, it follows by Claim 2 that there is a vertex $u_{1}\in V(F_{1})\backslash O_{3}$ with $d_{F_{1}}(u_{1})=2$ and we know $|N_{F_{1}}(u_{1})\cap O_{3}|=1$ and $d_{F_{1}}(N_{F_{1}}(u_{1})\backslash O_{3})=1$. Let $u_{2}$ be the vertex in $N_{F_{1}}(u_{1})\cap O_{3}$. It is easy to see that there is no vertex $u'$ in $G$ such that $|N_{G}(u'_{1})\cap N_{G}(u_{1})|=4$. Noting $|N_{G}(w_{2})\cap N_{G}(w_{3})|=4$, it implies that there is no automorphism $\varphi$ of $G$ such that $\varphi(w_{2})=u_{1}$, which contradicts the vertex-transitivity of $G$.

\textbf{Case 2.} \emph{$|S|=10$ or $12$.}

\vskip 2mm
\textbf{Claim 3.} \emph{For any given two distinct $\lambda_{7}$-atoms $S_{1}$ and $S_{2}$ of $G$ with $S_{1}\cap S_{2}\neq\emptyset$, $G[S_{1}\cap S_{2}]$ and $G[S_{1}\backslash S_{2}]$ are isomorphic to $K_{3,3}$ or $K_{2,2}$.}
\vskip 2mm

By Lemma \ref{5rec}(c), we have $d_{G}(S_{1}\cap
S_{2})+d_{G}(S_{1}\cup S_{2})\leq2\lambda_{7}(G)$,
$d_{G}(S_{1}\backslash S_{2})+d_{G}(S_{2}\backslash
S_{1})\leq2\lambda_{7}(G)$, $|S_{1}\cap S_{2}|\leq6$ and
$|S_{1}\backslash S_{2}|\leq6$. Then $|S_{1}\cap S_{2}|\geq4$ and
$|S_{1}\backslash S_{2}|\geq4$. By Lemma \ref{5rec}(a), each of
$d_{G}(S_{1}\cap S_{2})$, $d_{G}(S_{1}\cup S_{2})$,
$d_{G}(S_{1}\backslash S_{2})$ and $d_{G}(S_{2}\backslash S_{1})$ is
not less than $\lambda_{4}(G)$. Noting
$\lambda_{4}(G)=\lambda_{7}(G)=12$, we have $d_{G}(S_{1}\cap
S_{2})=d_{G}(S_{1}\backslash S_{2})=12$. Then $G[S_{1}\cap S_{2}]$
and $G[S_{1}\backslash S_{2}]$ are isomorphic to $K_{3,3}$ or
$K_{2,2}$. So Claim 3 holds.

\vskip 2mm

Let $R_{i}$ be the set of vertices $u$ in $S$ with $d_{G[S]}(u)=i$
for $3\leq i\leq5$. By Lemma \ref{5rec}(b), $E(G[R_{3}])=\emptyset$.

\vskip 2mm
\textbf{Claim 4.} \emph{$R_{5}=\emptyset$, or $G[R_{5}]$ is a $6$-cycle and $|S|=12$.}
\vskip 2mm

Suppose $R_{5}\neq\emptyset$. It only needs to show that $|S|=12$ and $G[R_{5}]$ is a $6$-cycle. Assume $R_{4}\neq\emptyset$. Choose a vertex $u\in R_{4}$ and a vertex $v\in R_{5}$. Let $\varphi_{5}$ be an automorphism of $G$ such that $\varphi_{5}(u)=v$. Then $\varphi_{5}(N_{G[S]}(u))\subseteq N_{G[S]}(v)$, which contradicts that $G[\varphi_{5}(S)\cap S]$ is isomorphic to $K_{3,3}$ or $K_{2,2}$ by Claim 3. Thus $R_{4}=\emptyset$. Noting $|R_{3}|+|R_{5}|=|S|$ and $3|R_{3}|+5|R_{5}|=2|E(S)|=5|S|-12$, we have $|R_{3}|=6$. For any two vertices $u',u''\in R_{5}$, it follows by Claim 3 that
$\varphi(S)=S$ for every
automorphism $\varphi$ of $G$ with $\varphi(u')=u''$. Hence $G[R_{5}]$ is $r$-regular, for some integer $r$. Then $18=3|R_{3}|=d_{G[S]}(R_{3})=d_{G[S]}(R_{5})=(5-r)(|S|-6)$, which implies $|S|=12$ and $r=2$. Hence $G[R_{5}]$ is a 6-cycle and Claim 4 is proved.

\vskip 2mm

By Claim 3, $G[S\cap T]$ and $G[S\backslash T]$ are isomorphic to $K_{3,3}$ or $K_{2,2}$. Noting $E(G[R_{3}])=\emptyset$, we have by Claim 4 that $G[S]$ is isomorphic to $G_{2}$, $G_{3}$, $G_{4}$ or $G_{5}$ in Figure 1.


\vskip 2mm
\textbf{Claim 5.} \emph{Each vertex in $G$ is contained in exactly two distinct $\lambda_{7}$-atoms of $G$.}
\vskip 2mm

By the vertex-transitivity of $G$, it only needs to show that $S'=S$ or $S'=T$ for a $\lambda_{7}$-atom $S'$ of $G$ with $S'\cap S\cap T\neq\emptyset$. Suppose $S'\neq S$ and $S'\neq T$. From Figure 1, we can see that $S$ has no subset $A$ different from $S\cap T$ and $S\backslash T$ such that $G[A]$ is isomorphic to $K_{3,3}$. Hence it follows by Claim 3 that $S'\cap S=S\cap T=S'\cap T$. Then $12=d_{G}(S\cap T)\geq d_{G[S]}(S\cap T)+d_{G[T]}(S\cap T)+d_{G[S']}(S\cap T)=18$, a contradiction. So Claim 5 holds.

\vskip 2mm

Suppose $|S|=10$. Then $G[S]$ is isomorphic to $G_{2}$. By Claims 3 and 5, there is a $\lambda_{7}$-atom $S''$ of $G$ such that $S''\cap S=S\backslash T$. Choose a vertex $u_{2}\in S\backslash T$ and a vertex $u_{3}\in S\cap T$. Noting that $G[S\backslash T]$ is not isomorphic to $G[S\cap T]$, we know by Claim 5 that there is no automorphism $\varphi$ of $G$ such that $\varphi(u_{2})=u_{3}$, a contradiction.

Suppose next $|S|=12$. Then $G[S]$ is isomorphic to $G_{3}$, $G_{4}$ or $G_{5}$. Let $V_{1}$, $V_{2}$, $\dots$, $V_{m}$ be all subsets of $V(G)$ which induce subgraphs of $G$ isomorphic to $K_{3,3}$. Noting that $G[S\cap T]$ and $G[S\backslash T]$ are isomorphic to $K_{3,3}$, it follows by Claims 3 and 5 that $V_{1}$, $V_{2}$, $\dots$, $V_{m}$ form a partition of $V(G)$ and for each $V_{i}$ there are exactly two elements $j_{1},j_{2}\in\{1,2,\dots,m\}\backslash\{i\}$ such that $G[V_{i}\cup V_{j_{1}}]$ and $G[V_{i}\cup V_{j_{2}}]$ are isomorphic to $G[S]$. We denote $V_{i}\sim V_{j}$ if $G[V_{i}\cup V_{j}]$ is isomorphic to $G[S]$, and assume $V_{1}\sim V_{2}\sim\dots\sim V_{m}\sim V_{1}$. If $G[S]$ is isomorphic to $G_{3}$, then it is easy to verify that $G$ is bipartite, a contradiction.
Thus $G[S]$ is isomorphic to $G_{4}$ or $G_{5}$.

Assume that there is some $V_{q}\subseteq\overline{V(H)}$. If $G[S]$ is isomorphic to $G_{4}$, then $N_{G}(V_{q}\backslash X)\cap V_{q-1}\subseteq X$, which implies
$|E(X)|\geq|E(V_{q-1})\cap E(X)|\geq2$, a contradiction. Thus $G[S]$ is isomorphic to $G_{5}$. Let $V_{j}$ be chosen such that $V_{j}\cap V(H)\neq\emptyset$ and $|j-q|$ is as small as possible. Then $|V_{j}\cap X|=3$ and $|N(u)\cap X|\geq4$ for each
$u\in V_{j}\cap V(H)$, which contradicts that $\delta(H)\geq2$.

Then we assume that $V_{i}\cap V(H)\neq\emptyset$ for $1\leq i\leq m$. Then $|V_{i}\cap X|>|V_{i}\backslash(V(H)\cup X)|$ if $V_{i}\cap X\neq\emptyset$. Choose some $V_{q'}$ which contains vertices in $V(G)\backslash(V(H)\cup X)$. Then $V_{q'-1}\cap X\neq\emptyset$ and $V_{q'+1}\cap X\neq\emptyset$. Noting $c_{0}(G-X)=|X|-2$, it follows that for each $i\in[m]$, $|V_{i}\cap X|=|V_{i}\backslash(V(H)\cup X)|+1$ if $i\in\{q'-1,q',q'+1\}$ and $|V_{i}\cap X|=\emptyset$ otherwise. Then $|V_{q'}\backslash(V(H)\cup X)|=2$. Hence $|V_{q'-1}\cap X)|=|V_{q'+1}\cap X)|=3$. Now we have $V_{q'-1}\sim V_{q'}\sim V_{q'+1}\sim V_{q'-1}$, which implies $V(G)=V_{q'-1}\cup V_{q'}\cup V_{q'+1}$ and $|V(H)|=3$. It follows that $g(G)=3$, a contradiction.
\end{proof}

\begin{Lemma}\label{5rec13} Suppose $k=5$, $\lambda_{5}(G)=\lambda_{6}(G)=13$ and $g(G)>3$. For a $\lambda_{6}$-atom $S$ of $G$, we have $|S|\geq 11$.
\end{Lemma}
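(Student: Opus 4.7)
The plan is to argue by contradiction and rule out small values of $|S|$. Since $S$ is a $\lambda_6$-atom, $|S|\ge 6$ and $d_G(S)=\lambda_6(G)=13$; the identity $13=5|S|-2|E(S)|$ forces $|S|$ to be odd, so if $|S|<11$ then $|S|\in\{7,9\}$. From Lemma \ref{5rec}(b) applied to a singleton I get $\delta(G[S])\ge 3$; applied to an edge $A=\{u,v\}$ of $G[S]$ (which is valid once $|S|\ge 8$, since then $|A|=2\le |S|-6$), it gives $d_{G[S]}(u)+d_{G[S]}(v)\ge 7$, so when $|S|=9$ no two $G[S]$-degree-$3$ vertices can be adjacent.

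For $|S|=7$, the sum of $G[S]$-degrees is $22$, which together with $\delta(G[S])\ge 3$ and maximum degree at most $5$ forces the degree sequence to be $(4,3,3,3,3,3,3)$. Let $v$ be the degree-$4$ vertex with $N_{G[S]}(v)=\{a,b,c,d\}$ and $S\setminus(\{v\}\cup N_{G[S]}(v))=\{e,f\}$. Triangle-freeness ($g(G)>3$) forces $\{a,b,c,d\}$ to be independent in $G[S]$, so each of $a,b,c,d$ takes its two remaining $G[S]$-neighbors from the two-element set $\{e,f\}$, making $e$ and $f$ each adjacent to all of $a,b,c,d$. Then $d_{G[S]}(e)\ge 4$, contradicting the degree sequence, so $|S|\ne 7$.

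For $|S|=9$ one has $|E(S)|=16$, and writing $(d_3,d_4,d_5)$ for the number of $G[S]$-vertices of the corresponding degree gives $d_3+d_4+d_5=9$ and $d_4+2d_5=5$, so $(d_3,d_4,d_5)\in\{(4,5,0),(5,3,1),(6,1,2)\}$. Let $R_3$ be the set of $G[S]$-degree-$3$ vertices; since $R_3$ is independent in $G[S]$, all $3|R_3|$ edges incident with $R_3$ go into $S\setminus R_3$, yielding $3|R_3|\le\sum_{u\in S\setminus R_3}d_{G[S]}(u)$. For $(6,1,2)$ this reads $18\le 14$, impossible. For $(5,3,1)$, the unique edge of $G[S\setminus R_3]$ has two endpoints whose joint number of $R_3$-neighbors is $3+3=6$ (if both are in $R_4$) or $4+3=7$ (if one endpoint is $v_5\in R_5$); in either case this exceeds $|R_3|=5$, so the two endpoints share a common $R_3$-neighbor, producing a triangle in $G$ and contradicting $g(G)>3$.

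The remaining case $(4,5,0)$ is the main obstacle: here $|R_3|=4$, $|R_4|=5$, and $|E(G[R_4])|=4$, but no immediate edge-counting argument rules it out. The plan is to combine Lemma \ref{5rec}(b) for $|A|=3$ (which bounds the joint $G[S]$-degree along every length-$2$ path in $G[S]$), the exclusion of $K_{3,3}$ from $G$ (which follows from Lemma \ref{5rec}(a): any such copy would yield a $6$-set $X$ with $d_G(X)=12<13=\lambda_6(G)$), and Lemma \ref{5rec}(c) applied to $T=\varphi(S)$ for suitable automorphisms $\varphi$, in order to examine each possible shape of $G[R_4]$ (namely $P_5$, $C_4$ with an isolated vertex, $K_{1,4}$, or $K_{1,3}$ with a pendant edge) and derive a contradiction in each, thereby completing the proof.
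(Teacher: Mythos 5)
Your reductions up to the last case are correct, and in places more elementary than the paper's: ruling out $|S|=7$ via the forced degree sequence $(4,3,3,3,3,3,3)$ and triangle-freeness, and ruling out the distributions $(5,3,1)$ and $(6,1,2)$ for $|S|=9$ by counting edges leaving the independent set $R_3$, replaces the automorphism-based dichotomy (the paper's Claim 1) that the paper uses to get $R_4=\emptyset$ or $R_5=\emptyset$. However, the proof is not complete: the case $(d_3,d_4,d_5)=(4,5,0)$, which is the heart of the paper's argument, is only announced as a plan and never carried out, so there is a genuine gap.

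What is missing is precisely the global, vertex-transitivity part of the proof, and your proposed toolkit is not sufficient to supply it. The paper first invokes Lemma \ref{component} to get $p=1$, $|X|\geq7$, $|V(H)|\geq9$, hence $|V(G)|\geq20$; this bound is what makes Lemma \ref{5rec}(a) applicable to $S\cup\varphi(S)$ (its complement must have at least $6$ vertices), and it yields the key dichotomy: for any automorphism $\varphi$ with $\varphi(R_{4}\cup R_{5})\cap(R_{4}\cup R_{5})\neq\emptyset$, either $\varphi(S)=S$ or $G[S\cap\varphi(S)]$ is isomorphic to $K_{2,3}$. Your sketch never mentions this order bound, and the purely local data you list (Lemma \ref{5rec}(b) with $|A|=3$, $K_{3,3}$-freeness, Lemma \ref{5rec}(c)) cannot by itself finish the case $(4,5,0)$: of your four candidate shapes for $G[R_{4}]$, the path $P_{5}$ and the chair are indeed killed by the same triangle-free counting you already used (adjacent $R_{4}$-vertices have disjoint $R_{3}$-neighbourhoods of sizes $3$ and $2$, and $3+2>|R_{3}|=4$), but the two surviving shapes, $C_{4}$ plus an isolated vertex and $K_{1,4}$, require the paper's final argument: choose $u_{1},u_{2}\in R_{4}$ with $d_{G[R_{4}]}(u_{1})<d_{G[R_{4}]}(u_{2})$, take an automorphism $\psi$ with $\psi(u_{2})=u_{1}$, use the $K_{2,3}$-intersection dichotomy to exclude the $4$-cycle shape and to force $N_{G[\psi(S)\cap S]}(u_{1})\subseteq R_{3}$ in the $K_{1,4}$ case, and then contradict $\delta(G[S])\geq3$. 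Until this step (or an equivalent argument) is actually written out, the lemma is not proved.
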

\begin{proof} To the contrary, suppose $|S|<11$. Noting that $13=d(S)=5|S|-2|E(S)|$, $|S|$ is odd. Then $|S|\geq7$. By Lemma \ref{5rec}(b), $\delta(G[S])\geq3$. By Lemma \ref{component}, we have $p=1$, $|X|\geq7$ and $|V(H)|\geq9$. Hence $|V(G)|\geq20$.

Assume $|S|=7$. Then $|E(S)|=\frac{1}{2}(5|S|-13)=11$. If $G[S]$ is bipartite, then
$|E(S)|\geq\frac{1}{2}(|S|+1)\delta(G[S])\geq12$, a contradiction. Thus $G[S]$ is non-bipartite. Let $C$ be a shortest cycle of odd length in $G[S]$. Then $5\leq|V(C)|\leq7$. Noting that $|N_{G[S]}(u)\cap V(C)|\leq2$ for each $u\in S\backslash V(C)$, we have $|E(S)|\leq10$, a contradiction.

So $|S|=9$. Let $R_{i}$ be the set of vertices $u$ in $S$ with $d_{G[S]}(u)=i$ for $3\leq i\leq5$.

\vskip 2mm

\textbf{Claim 1.} \emph{For any automorphism of $\varphi$ of $G$ with $\varphi(R_{4}\cup R_{5})\cap(R_{4}\cup R_{5})\neq\emptyset$, either $\varphi(S)=S$ or $G[S\cap\varphi(S)]$ is isomorphic to $K_{2,3}$.}

\vskip 2mm

Suppose $\varphi(S)\neq S$. By Lemma \ref{5rec}(c),
$|S\cap\varphi(S)|\leq5$, $|S\backslash\varphi(S)|\leq5$ and
$d(S\cap\varphi(S))+d(S\cup\varphi(S))\leq2\lambda_{6}(G)$. Then
$4\leq|S\cap\varphi(S)|\leq5$ and
$|S\cup\varphi(S)|=|S|+|\varphi(S)|-|S\cap\varphi(S)|\leq14$. As
$|V(G)|\geq20$, we have $d(S\cup\varphi(S))\geq\lambda_{6}(G)$ by
Lemma \ref{5rec}(a). Hence
$d(S\cap\varphi(S))\leq\lambda_{6}(G)=13$. Noting
$|N_{G[\varphi(S)]}(u)\cap N_{G[S]}(u)|\geq3$ for each $u\in
\varphi(R_{4}\cup R_{5})\cap(R_{4}\cup R_{5})$, it follows that
$G[S\cap\varphi(S)]$ is isomorphic to $K_{2,3}$. So Claim 1 holds.

\vskip 2mm

By Claim 1, it follows that $G$ has no automorphism $\varphi$ such
that $\varphi(R_{4})\cap R_{5}\neq\emptyset$, which implies
$R_{4}=\emptyset$ or $R_{5}=\emptyset$. Noting
$\sum_{i=3}^{5}i|R_{i}|=2|E(S)|=32$ and
$\sum_{i=3}^{5}|R_{i}|=|S|=9$, we have $|R_{3}|=4$, $|R_{4}|=5$ and
$R_{5}=\emptyset$. By Lemma \ref{5rec}(b), $E(R_{3})=\emptyset$.
Hence $|E(R_{4})|=4$. As $g(G[S])\geq g(G)>3$, it is easy to verify
that $G[R_{4}]$ has a 4-cycle or is isomorphic to $K_{1,4}$. Let
$u_{1}$ and $u_{2}$ be two vertices in $R_{4}$ with
$d_{G[R_{4}]}(u_{1})<d_{G[R_{4}]}(u_{2})$. Since $G$ is
vertex-transitive, there is an automorphism $\psi$ of $G$ such that
$\psi(u_{2})=u_{1}$. By Claim 1, $G[\psi(S)\cap S]$ is isomorphic to
$K_{2,3}$. As $u_{1},u_{2}\in R_{4}$, we know $d_{G[\psi(S)\cap
S]}(u_{1})=3$. Note that $|N_{G[S]}(u)\cap N_{G[S]}(u_{1})|\leq2$
for each $u\in S\backslash\{u_{1}\}$ if $G[R_{4}]$ has a 4-cycle. It
follows that $G[R_{4}]$ is isomorphic to $K_{1,4}$. Since
$d_{G[\psi(S)\cap S]}(v)=2$ for each $v\in N_{G[\psi(S)\cap
S]}(u_{1})$, it follows that $N_{G[\psi(S)\cap S]}(u_{1})\subseteq
R_{3}$. It implies that the vertex in $R_{3}\backslash
N_{G[S]}(u_{1})$ has only two neighbors in $S$, which contradicts
$\delta(G[S])\geq3$.
\end{proof}

\begin{Lemma}\label{7rec} Suppose $k=5$, $\lambda_{6}(G)=\lambda_{7}(G)=14$ and $g(G)>3$. For a $\lambda_{7}$-atom $S$ of $G$, we have $|S|\geq 14$.
\end{Lemma}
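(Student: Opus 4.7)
I argue by contradiction, supposing $|S|\leq 13$. The identity $14=d_G(S)=5|S|-2|E(S)|$ forces $|S|$ to be even, so $|S|\in\{8,10,12\}$. Setting $A=\{v\}$ in Lemma~\ref{5rec}(b) yields $\delta(G[S])\geq 3$. Since $\lambda_6(G)=14$, Lemma~\ref{component}(c) gives $p=1$, $|X|\geq 10$, $|V(H)|\geq 15$, and $|V(G)|\geq 32$; in particular $|\overline S|\geq 20>7$, so Lemma~\ref{5rec}(a) and (c) may be invoked freely for $S$ and any second $\lambda_7$-atom $T$ intersecting $S$. The plan is then to eliminate each value of $|S|$ in turn.

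For $|S|=8$ the sum of degrees in $G[S]$ is $26$, so the degree sequence is either $(3^7,5)$ or $(3^6,4^2)$. The first is ruled out directly: a vertex $v^*$ of degree $5$ in $G[S]$ has all its $G$-neighbours in $S$, and $N_{G[S]}(v^*)$ must be an independent $5$-set by triangle-freeness, forcing each of the remaining two vertices to be adjacent to all five neighbours of $v^*$, contradicting $\Delta(G[S])\leq 5$. To rule out $(3^6,4^2)$, I would mimic Subcases~1.1--1.2 of Lemma~\ref{6to7rec}: use vertex-transitivity to produce an automorphism $\varphi$ with $T=\varphi(S)\neq S$ and $S\cap T\neq\emptyset$ (obtainable unless $S$ is already a block), then apply Lemma~\ref{5rec}(c) (which bounds $|S\cap T|,|S\setminus T|\leq 6$) together with the ``every edge lies in a $4$-cycle'' principle and the $K_{2,5}$/$K_{3,3}$-exclusion arguments from Lemma~\ref{6to7rec} to derive a contradiction.

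For $|S|\in\{10,12\}$, Lemma~\ref{5rec}(b) now applies to subsets of size up to $|S|-7\in\{3,5\}$, which is enough to pin down $G[S]$. I would partition $S$ into degree classes $R_i=\{u\in S:d_{G[S]}(u)=i\}$ for $3\leq i\leq 5$; Lemma~\ref{5rec}(b) forces $E(G[R_3])=\emptyset$ and analogous independence-type constraints on larger subsets, as used in Lemmas~\ref{5rec14to15} and~\ref{5rec13}. Combining $\sum_i i|R_i|=2|E(S)|$ with $\sum_i|R_i|=|S|$ and the inequalities $d_{G[S]}(R_j)\geq d_{G[S]}(R_{j'})$ arising from Lemma~\ref{5rec}(b) restricts the admissible degree distributions to a short list. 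For $|S|=12$, any second $\lambda_7$-atom $T$ intersecting $S$ must, by Lemma~\ref{5rec}(c) and $\lambda_6=14$, satisfy $|S\cap T|=|S\setminus T|=6$ with $d_G(S\cap T)=d_G(S\setminus T)=14$; exactly as in Claim~3 of Lemma~\ref{6to7rec}, the induced subgraphs $G[S\cap T]$ and $G[S\setminus T]$ are then both isomorphic to $K_{3,3}$, and the resulting block structure is incompatible with $|V(H)|\geq 15$, $p=1$, and non-bipartiteness. The case $|S|=10$ is analogous but with smaller $K_{2,3}$-type blocks.

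\emph{Main obstacle.} The case $|S|=12$ requires carrying out the $K_{3,3}$-block analysis from Lemma~\ref{6to7rec} in a new parameter regime; more critically, the case $|S|=8$ is delicate because Lemma~\ref{5rec}(b) supplies only $\delta(G[S])\geq 3$, and the elimination of $(3^6,4^2)$ cannot rely on the clean degree-sum/block arguments that handle the other sizes. Instead it demands detailed bookkeeping of boundary edges $|\nabla(u)\cap\nabla(H)|$ and of $4$-cycles through each edge, in the spirit of Subcases~1.1--1.2 of Lemma~\ref{6to7rec}, and this is where most of the work will lie.
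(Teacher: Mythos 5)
Your overall plan (force $|S|\in\{8,10,12\}$, get $\delta(G[S])\geq3$ from Lemma~\ref{5rec}(b), then kill each size by atom-intersection and automorphism arguments) matches the paper's strategy, but the key structural claim in your $|S|=12$ case is wrong, and it is exactly the point where this lemma differs from Lemma~\ref{6to7rec}. If $T$ is a second $\lambda_{7}$-atom meeting $S$, Lemma~\ref{5rec}(c) and \ref{5rec}(a) do give $|S\cap T|=|S\setminus T|=6$ and $d_{G}(S\cap T)=d_{G}(S\setminus T)=14$; but $d_{G}(A)=14$ for a $6$-set $A$ means $|E(A)|=8$, so the pieces are $K_{2,4}$ or $K_{3,3}-e$, \emph{not} $K_{3,3}$. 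Indeed $K_{3,3}$ has boundary $12<14=\lambda_{6}(G)$, so in this regime $G$ has no $K_{3,3}$ subgraph at all (the paper records this explicitly). Consequently the clean ``partition of $V(G)$ into $K_{3,3}$-blocks, two atoms per vertex'' mechanism from Lemma~\ref{6to7rec} does not transfer: with $K_{2,4}$ or $K_{3,3}-e$ pieces the paper needs a genuinely different argument (Claims~2--3 forcing $|W|=|Z|$ and $R_{5}=\emptyset$, then a third atom $Q$ through a vertex $v_{10}$ of unequal $G[S]$-/$G[T]$-degree and a case analysis on $|Q\cap S\cap T|$). Your stated contradiction ``incompatible with $|V(H)|\geq15$, $p=1$, and non-bipartiteness'' rests on the false $K_{3,3}$ structure, and the remark that $|S|=10$ is ``analogous with $K_{2,3}$-type blocks'' inherits the same problem; the paper's $|S|=10$ case is instead a vertex-by-vertex automorphism analysis of the specific graph $G_{8}$.

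The second gap is that the numerical contradictions you hope to reach in the $|S|=8$ (and $|S|=10$) cases need boundary bookkeeping that your sketch never sets up. The paper first pins down $d_{G}(H)=15$, $E(X)=\emptyset$, $O_{4}\cup O_{5}=\emptyset$ and $m_{3}\cdot n_{3}\leq1$ from Lemma~\ref{5rec}(a) applied to $V(H)\cup A$ and $V(H)\setminus A$, then gets $g_{0}(G)\geq9$ from Lemma~\ref{girthX}; bipartiteness of $G[S]$ (not just $\delta\geq3$) is what identifies $G[S]$ as $G_{6}$ when $|S|=8$, and the final contradiction $15=\sum_{i}im_{i}\leq 6m_{3}+8n_{3}\leq14$ is a count of $\nabla(H)$-edges through $4$-cycles, using precisely $O_{4}\cup O_{5}=\emptyset$ and $m_{3}n_{3}\leq1$. ``Mimic Subcases~1.1--1.2 of Lemma~\ref{6to7rec}'' does not supply this: that lemma lives in the regime $\lambda_{7}=12$, $\lambda_{4}=12$, where the intersection subgraphs and exclusion arguments ($K_{2,2}/K_{3,3}$, $K_{2,5}$) are different, and its counting is tied to $d_{G}(H)\in\{13,15\}$ with a possible edge in $E(X)$, which is ruled out here. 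So as written the proposal is a plausible outline with the same skeleton as the paper, but the $|S|=12$ step would fail as stated and the remaining cases are not yet proofs.
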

\begin{proof} By Lemma \ref{component}, we have $p=1$, $|X|\geq10$ and $|V(H)|\geq15$. Hence $|V(G)|\geq32$. For $1\leq i\leq5$, let $O_{i}$ be the set of vertices $u$ in $G$ with $|\nabla(u)\cap(V(H))|=i$, and set $m_{i}=|O_{i}\cap X|$ and $n_{i}=|O_{i}\cap V(H)|$.
By Lemma \ref{5rec}(a), $d_{G}(V(H)\cup A)\geq\lambda_{6}(G)$ and
$d_{G}(V(H)\backslash A)\geq\lambda_{6}(G)$ for each subset $A$ of
$V(G)$ with $|A|\leq2$. Also noting that $d_{G}(H)$ is odd, it
follows that $d_{G}(H)=15$, $O_{4}\cup O_{5}=\emptyset$ and
$m_{3}\cdot n_{3}\leq1$. Hence $E(X)=\emptyset$. Then
$g_{0}(G)\geq9$ by Lemma \ref{girthX}.

Suppose $|S|<14$. As $5|S|-2|E(G[S])|=14$, $|S|$ is an even integer
with $8\leq|S|\leq12$. By Lemma \ref{5rec}(b), $\delta(G[S])\geq3$.
As $g_{0}(G)\geq9$, it follows that $G[S]$ is bipartite. By Lemma
\ref{5rec}(a), $d_{G}(S\cup\{u\})\geq\lambda_{6}(G)$ for each
$u\in\overline{S}$ and $d_{G}(A)\geq\lambda_{6}(G)$ for each subset
$A\subseteq V(G)$ with $|A|=6$. Hence $|N_{G}(u)\cap S|\leq2$ for
each $u\in\overline{S}$ and $G$ has no subgraphs isomorphic to
$K_{3,3}$.

\vskip 2mm
\textbf{Claim 1.} \emph{For any two distinct $\lambda_{7}$-atoms $S_{1}$ and $S_{2}$ of $G$ with $S_{1}\cap S_{2}\neq\emptyset$, we have $d_{G}(S_{1}\cap S_{2})\leq14$ and furthermore, $G[S_{1}\cap S_{2}]$ and $G[S_{1}\backslash S_{2}]$ are isomorphic to $K_{2,4}$ or $K_{3,3}-e$ if $|S|=12$, where $K_{3,3}-e$ is a subgraph of $K_{3,3}$ obtained by deleting an edge $e$ from $K_{3,3}$.}
\vskip 2mm

By Lemma \ref{5rec}(c), we have $|S_{1}\cap S_{2}|\leq6$,
$|S_{1}\backslash S_{2}|\leq6$, $d_{G}(S_{1}\cap
S_{2})+d_{G}(S_{1}\cup S_{2})\leq2\lambda_{7}(G)$ and
$d_{G}(S_{1}\backslash S_{2})+d_{G}(S_{2}\backslash
S_{1})\leq2\lambda_{7}(G)$. Noting $|V(G)|\geq32$, we have
$d_{G}(S_{1}\cup S_{2})\geq\lambda_{7}(G)$ by Lemma \ref{5rec}(a).
Hence $d_{G}(S_{1}\cap S_{2})\leq\lambda_{7}(G)=14$. Next assume
$|S|=12$. Then $|S_{1}\cap S_{2}|=|S_{1}\backslash S_{2}|=6$. By
Lemma \ref{5rec}(a), each of $d_{G}(S_{1}\cap S_{2})$,
$d_{G}(S_{1}\backslash S_{2})$ and $d_{G}(S_{2}\backslash S_{1})$ is
not less than $\lambda_{6}(G)$. Hence $d_{G}(S_{1}\cap
S_{2})=d_{G}(S_{1}\backslash S_{2})=14$. It implies that
$G[S_{1}\cap S_{2}]$ and $G[S_{1}\backslash S_{2}]$ are isomorphic
to $K_{2,4}$ or $K_{3,3}-e$. So Claim 1 holds.

\begin{figure}[h]
\begin{center}
\includegraphics[scale=0.6]{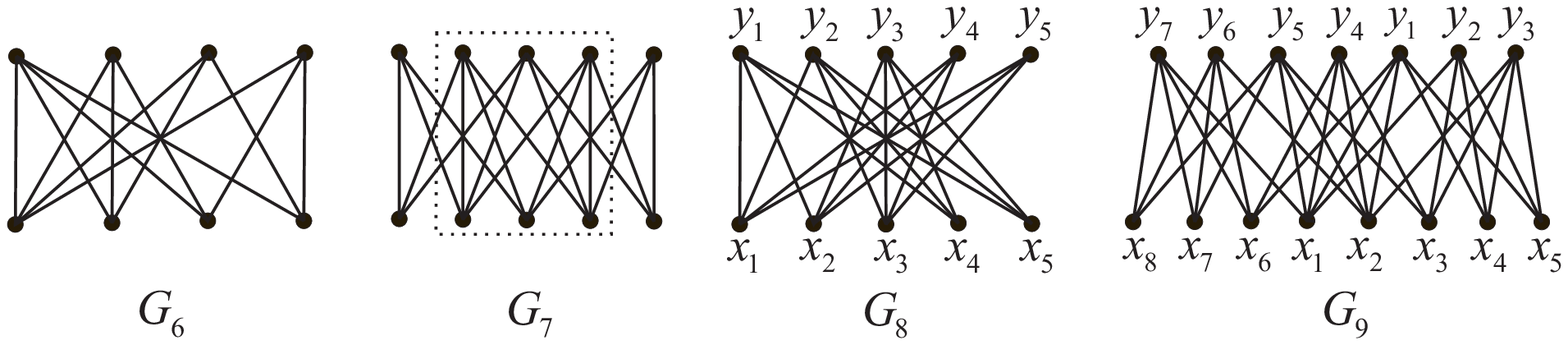}
\\{\small{Figure 2. The illustration in the proof of Lemma \ref{7rec}.}}
\end{center}
\end{figure}

\textbf{Case 1.} $|S|=8$.

As $G[S]$ is a bipartite graph with $|E(S)|=13$ and $\delta(G[S])\geq3$, $G[S]$ is isomorphic to $G_{6}$ in Figure 2. Let $v_{1}$, $v_{2}$ be the two vertices in $S$ with $d_{G[S]}(v_{1})=d_{G[S]}(v_{2})=4$ and choose a vertex $v_{3}\in N_{G[S]}(v_{1})\backslash\{v_{2}\}$.

We claim that each edge in $G$ is contained in a 4-cycle of $G$.
Otherwise, suppose that $G$ has an edge contained in no 4-cycles of
$G$. Since $G$ is vertex-transitive, each vertex in $G$ is incident
with an edge contained in no 4-cycles of $G$ and there is an
automorphism $\varphi_{1}$ of $G$ such that
$\varphi_{1}(v_{3})=v_{2}$. Clearly, $\varphi_{1}(S)\neq S$. Noting
that each edge in $G[S]$ is contained in a 4-cycle of $G[S]$, we
have $d_{G[\varphi_{1}(S)\cup S]}(u)\leq4$ for each
$u\in\varphi_{1}(S)\cup S$. Then
$\varphi_{1}(N_{G[S]}(v_{3})\subseteq N_{G[S]}(v_{2})$ and
$N_{G[S]}(\varphi_{1}(v_{1}))\subseteq\varphi_{1}(N_{G[S]}(v_{1}))$.
Noting that $|\varphi_{1}(S)\cap S|\leq6$ by Lemma \ref{5rec}(c) and
$d_{G}(\varphi_{1}(S)\cap S)\leq14$ by Claim 1,
$G[S\cap\varphi_{1}(S)]$ is isomorphic to $K_{3,3}-e$. As
$d_{G}(S\cup\varphi_{1}(S))\geq\lambda_{6}(G)=14$ by Lemma
\ref{5rec}(a), it follows that $G[S\cup\varphi_{1}(S)]$ is
isomorphic to $G_{7}$ in Figure 2, where the graph in the virtual
box corresponds to $G[S\cap\varphi_{1}(S)]$. Choose a vertex
$v_{4}\in S\cap\varphi_{1}(S)$ with
$d_{G[S\cap\varphi_{1}(S)]}(v_{4})=2$. Let $\varphi_{2}$ be an
automorphism of $G$ such that $\varphi_{2}(v_{1})=v_{4}$. Then
$\varphi_{1}(N_{G[S]}(v_{1}))=N_{G[S\cup\varphi_{1}(S)]}(v_{4})$ and
$\varphi_{1}(N_{G[S]}(v_{2}))\backslash(S\cup\varphi_{1}(S))\neq\emptyset$.
Then
$d_{G}(S\cup\varphi_{2}(S)\cup\varphi_{1}(N_{G[S]}(v_{2})))<14=\lambda_{6}(G)$,
contradicting Lemma \ref{5rec}(a). So our this claim holds.

For each $uv\in\nabla(H)$, noting that $uv$ is contained in a 4-cycle of $G$ by the previous claim, we have $|\nabla(u)\cap\nabla(H)|+|\nabla(v)\cap\nabla(H)|\geq3$. Hence $m_{1}\leq n_{2}+2n_{3}$. For each $u\in O_{2}\cup O_{3}$, there is an automorphism $\varphi_{3}$ of $G$ such that $\varphi_{3}(v_{1})=u$, which implies that there is a vertex $v\in\varphi_{3}(N_{G[S]}(v_{1}))$ such that $uv\in\nabla(H)$ and $|\nabla(v)\cap\nabla(H)|=3$. Hence $m_{2}\leq3n_{3}$ and $n_{2}\leq3m_{3}$. Noting $m_{3}\cdot n_{3}\leq1$, we have $15=\sum_{i=1}^{3}im_{i}\leq n_{2}+2n_{3}+6n_{3}+3m_{3}\leq6m_{3}+8n_{3}\leq14$, a contradiction.

\textbf{Case 2.} \emph{$|S|=10$ or $12$.}

Let $R_{i}$ be the set of vertices $u$ in $S$ with $d_{G[S]}(u)=i$
for $3\leq i\leq5$. Then $E(R_{3})=\emptyset$ by Lemma
\ref{5rec}(b). Let $Z$ and $W$ be the bipartition of $G[S]$ such
that $|Z|\leq|W|$. Noting
$\frac{1}{2}(5|S|-14)=|E(S)|\geq\delta(G[S])|W|\geq3|W|$, we have
$|W|<\frac{1}{2}|S|+2$.

\vskip 2mm
\textbf{Claim 2.} \emph{If $R_{5}\neq\emptyset$, then, for each $v\in R_{4}$, there is exactly one vertex $w$ in $S\backslash\{v\}$ with $N_{G[S]}(v)\subseteq N_{G[S]}(w)$.}
\vskip 2mm

Suppose $R_{5}\neq\emptyset$. Choose a vertex $u\in R_{5}$ and a
vertex $v\in R_{4}$. Let $\varphi_{4}$ be an automorphism of $G$
such that $\varphi_{4}(u)=v$. Then
$N_{G[S]}(v)\subseteq\varphi_{4}(N_{G[S]}(u))$. Noting that
$|S\cap\varphi_{4}(S)|\leq6$ by Lemma \ref{5rec}(c) and
$d_{G}(S\cap\varphi_{4}(S))\leq14$ by Claim 1,
$G[S\cap\varphi_{4}(S)]$ is isomorphic to $K_{2,4}$. It implies that
$S$ has a vertex $w$ different from $v$ with $N_{G[S]}(v)\subseteq
N_{G[S]}(w)$. As $G$ has no subgraphs isomorphic to $K_{3,3}$, such
vertex $w$ is unique. So Claim 2 holds.

\vskip 2mm
\textbf{Claim 3.} \emph{$|W|=|Z|$ and $R_{5}=\emptyset$.}
\vskip 2mm

Suppose, to the contrary, that $|W|>|Z|$, or $|W|=|Z|$ and $R_{5}\neq\emptyset$. As $E(R_{3})=\emptyset$, it follows that $|W|=6$ if $|S|=10$.

Assume $|W|=|Z|+2=7$. Noting $|E(S)|=23$, there is a vertex $v_{5}\in(R_{4}\cup R_{5})\cap W$ and a vertex $v_{6}\in R_{5}\cap Z$. Let $\varphi_{5}$ be an automorphism of $G$ such that $\varphi_{5}(v_{5})=v_{6}$. Then $\varphi_{5}(S)\neq S$ and $\varphi_{5}(N_{G[S]}(v_{5}))\subseteq N_{G[S]}(v_{6})$. Hence $G[S\cap\varphi_{5}(S)]$ is isomorphic to $K_{2,4}$ by Claim 1. It implies $|\varphi_{5}(W)\backslash S|=5$, contradicting that $G[\varphi_{5}(S)\backslash S]$ is isomorphic to $K_{2,4}$ or $K_{3,3}-e$ by Claim 1.

Assume $|W|=6$. If $|S|=10$, we know $|R_{4}\cap Z|=|R_{5}\cap Z|=2$ as $E(R_{3})=\emptyset$ and $|E(S)|=18$. If $|S|=12$, we know either $|R_{5}\cap Z|=2=|R_{4}\cap Z|+1$ or $|R_{5}\cap Z|=1=|R_{4}\cap Z|-2$ as $|E(S)|=23$. It follows by Claim 2 that there is a vertex $v_{7}\in R_{4}\cap Z$ and a vertex $v_{8}\in(R_{4}\cup R_{5})\backslash\{v_{7}\}$ such that $N_{G[S]}(v_{7})\subseteq N_{G[S]}(v_{8})$ and $(R_{5}\cap Z)\backslash\{v_{8}\}\neq\emptyset$. It implies that $G[S]$ has a subgraph isomorphic to $K_{3,3}$, a contradiction. So Claim 3 holds.

\vskip 2mm

\textbf{Subcase 2.1.} Suppose first that $|S|=10$.

By Claim 3, $G[S]$ is isomorphic to $G_{8}$ in Figure 2 and we label $G[S]$ as in $G_{8}$. Assume $x_{1}\in Z$ and $y_{1}\in W$.

\vskip 2mm
\textbf{Claim 4.} \emph{$|N_{G}(u)\cap N_{G}(v)|\leq3$ for any two distinct vertices $u$ and $v$ in $G$.}
\vskip 2mm

Suppose that there are two distinct vertices $u$ and $v$ in $G$ with $|N_{G}(u)\cap N_{G}(v)|\geq4$. Notice that $|N_{G}(u)\cap S|\leq2$ for each $u\in\overline{S}$. By the vertex-transitivity of $G$, for each $y_{i}\in\{y_{1},y_{2},y_{3}\}$ there is a vertex $y_{j}\in\{y_{1},y_{2},y_{3}\}\backslash\{y_{i}\}$ such that $|N_{G}(y_{i})\cap N_{G}(y_{j})|\geq4$. It follows that there is a vertex $w\in\overline{S}$ such that $\{y_{1},y_{2},y_{3}\}\subseteq N_{G}(w)$, a contradiction. So Claim 4 holds.

\vskip 2mm

Let $\varphi_{6}$ be an automorphism of $G$ such that
$\varphi_{6}(y_{5})=y_{1}$. Then $\varphi_{6}(S)\neq S$ and
$|\varphi_{6}(N_{G[S]}(y_{5}))$ $\cap$ $N_{G[S]}(y_{1})|\geq2$. Then
$|\varphi_{6}(S)\cap S|\leq6$ by Lemma \ref{5rec}(c) and
$d_{G}(\varphi_{6}(S)\cap S)\leq14$ by Claim 1. It follows that
$|\varphi_{6}(S)\cap W|\leq3$ and $|\varphi_{6}(S)\cap Z|\leq3$
since $G[S]$ has no subgraphs isomorphic to $K_{2,4}$ by Claim 4.

Assume $\varphi_{6}(N_{G[S]}(y_{5}))\cap\{x_{1},x_{2}\}\neq\emptyset$ and $\varphi_{6}(N_{G[S]}(y_{5}))\cap\{x_{4},x_{5}\}\neq\emptyset$. Then $|N_{G[\varphi_{6}(S)]}(u)\cap N_{G[S]}(u)|=3$ for each $u\in\varphi_{6}(N_{G[S]}(y_{5}))\cap\{x_{1},x_{2}\}$ and $|N_{G[\varphi_{6}(S)]}(v)\cap N_{G[S]}(v)|\geq2$ for each $v\in\varphi_{6}(N_{G[S]}(y_{5}))\cap\{x_{4},x_{5}\}$. It follows that $|\varphi_{6}(S)\cap W|=3$ and $|\varphi_{6}(S)\cap\{y_{4},y_{5}\}|=1$. Noting $2\leq|\varphi_{6}(S)\cap Z|\leq3$, we can see $d_{G}(\varphi_{6}(S)\cap S)>14$, a contradiction.

Assume $\varphi_{6}(N_{G[S]}(y_{5}))\cap N_{G[S]}(y_{1})=\{x_{4},x_{5}\}$. Then $\varphi_{6}(y_{4})\in\{y_{2},y_{3}\}\cup\overline{S}$, which implies that $|N_{G}(y_{1})\cap N_{G}(\varphi_{6}(y_{4}))|\geq4$ or $|N_{G}(x_{4})\cap N_{G}(x_{5})|\geq4$. It contradicts Claim 4.

Thus $\varphi_{6}(N_{G[S]}(y_{5}))\cap N_{G[S]}(y_{1})=\{x_{1},x_{2}\}$. By Claim 4, $\varphi_{6}(y_{4})\in\{y_{4},y_{5}\}$ and $\varphi_{6}(\{y_{1},y_{2},$ $y_{3}\})\cap W=\{y_{4},y_{5}\}\backslash\varphi_{6}(y_{4})$. Then $\{\varphi_{6}(x_{4}),\varphi_{6}(x_{5})\}\subseteq\overline{S}$. Assume $\varphi_{6}(y_{4})=y_{4}$. Set $\{y_{6},y_{7}\}=\varphi_{6}(\{y_{1},y_{2},$ $y_{3}\})\backslash W$, $\{x_{6}\}=\varphi_{6}(N_{G[S]}(y_{5}))\backslash N_{G[S]}(y_{1})$ and $\{x_{7},x_{8}\}=\{\varphi_{6}(x_{4}),\varphi_{6}(x_{5})\}$. Then the graph $G_{9}$ showed in Figure 2 is a subgraph of $G$.

We can see that each edge incident with $x_{1}$ is contained in a 4-cycle of $G$. Then, by the vertex-transitivity of $G$, each edge $uv\in\nabla(H)$ is contained in a 4-cycle of $G$, which implies $|\nabla(u)\cap\nabla(H)|\geq2$ or $|\nabla(v)\cap\nabla(H)|\geq2$. Hence there is a vertex $u'\in G$ with $2\leq|\nabla(u_{2})\cap\nabla(H)|\leq3$. Let $\varphi_{7}$ be an automorphism of $G$ such that $\varphi_{7}(y_{4})=u'$. It is easy to verify that either $\varphi_{7}(N_{G[\varphi_{6}(S)\cup S]}(y_{4}))$ has a vertex $u$ with $|\nabla(u)\cap\nabla(H)|\geq4$ or it have two vertices $v'$ and $v''$ with $\{u_{2}v',u_{2}v''\}\subseteq\nabla(H)$ and $|\nabla(v')\cap\nabla(H)|=|\nabla(v'')\cap\nabla(H)|=3$, contradicting that $O_{4}\cup O_{5}=\emptyset$ and $m_{3}\cdot n_{3}\leq1$.

\textbf{Subcase 2.2.} Now suppose $|S|=12$.

Noting $|E(G[S])|=23$, $G[S]$ is not regular. Let $\varphi_{8}$ be an automorphism of $G$ such that $\varphi_{8}(S)\neq S$ and $\varphi_{8}(S)\cap S\neq\emptyset$. Set $T=\varphi_{8}(S)$. It follows by Claims 1 and 3 that $d_{G[S\cup T]}(u)=5$ for each $u\in S\cap T$, each of $G[S\backslash T]$, $G[S\cap T]$ and $G[T\backslash S]$ is isomorphic to $K_{3,3}-e$ and $d_{G[S]}(v)=d_{G[T]}(v)=4$ for each $v\in S\cap T$ with $d_{G[S\cap T]}(v)=3$.

Let $v_{9}$ and $v_{10}$ be two vertices in $W\cap T$ with $d_{G[S\cap T]}(v_{9})=3=d_{G[S\cap T]}(v_{10})+1$. We know either
$d_{G[S]}(v_{10})=4$ or $d_{G[T]}(v_{10})=4$ and assume $d_{G[S]}(v_{10})=4$ without loss of generality. Let $\varphi_{9}$ be an automorphism of $G$ such that $\varphi_{9}(v_{9})=v_{10}$. Let $Q$ be one of $\varphi_{9}(S)$ and $\varphi_{9}(T)$ such that $Q\neq S$. Since $d_{G[Q]}(v_{10})=4$, we know $Q\neq T$. By Claims 1 and 3, each of $G[Q\cap S]$, $G[Q\backslash S]$, $G[Q\cap T]$ and $G[Q\backslash T]$ is isomorphic to $K_{3,3}-e$. Noting $d_{G[S]}(v_{10})=d_{G[Q]}(v_{10})=4$, we have $|N_{G[Q]}(v_{10})\cap S|=3$, which implies $2\leq|Q\cap S\cap T|\leq5$.

Assume $2\leq|Q\cap S\cap T|\leq3$. Noting that $G[Q\cap T]$ is isomorphic to $K_{3,3}-e$, we have $d_{G[Q\cap T]}(Q\cap S\cap T)>|Q\cap S\cap T|\geq d_{G[T]}(Q\cap S\cap T)$, a contradiction.

Assume $4\leq|Q\cap S\cap T|\leq5$. Then $|N_{G[Q]}(v_{10})\cap S\cap T|=2$. If $E(Q\cap S\cap\overline{T})=\emptyset$, then $d_{G[Q\cap S]}(Q\cap S\cap\overline{T})+d_{G[Q\backslash T]}(Q\cap S\cap\overline{T})\geq4|Q\cap S\cap\overline{T}|>|[Q\cap S\cap\overline{T},\overline{S}\cup T]|$, a contradiction. Thus $|Q\cap S\cap\overline{T}|=2$ and $|E(Q\cap S\cap\overline{T})|=1$. Then $d_{G[Q\cap S]}(Q\cap S\cap\overline{T})+d_{G[Q\backslash T]}(Q\cap S\cap\overline{T})\geq3+3>5\geq|[Q\cap S\cap\overline{T},\overline{S}\cup T]|$, a contradiction.
\end{proof}

\begin{Lemma}\label{8rec} Suppose $k=5$, $\lambda_{6}(G)\geq14$, $\lambda_{8}(G)=15$ and $g(G)>3$. For a $\lambda_{8}$-atom $S$ of $G$, we have $|S|\geq 15$.
\end{Lemma}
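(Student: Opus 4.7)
Plan. Following the approach of Lemma \ref{7rec}, we first extract the global structure, then rule out each small size of a $\lambda_{8}$-atom by a case analysis.

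First, for the global setup: since $\lambda_{6}(G)\geq 14$ and $k=5$, Lemma \ref{component}(c) gives $p=1$, $|X|\geq 10$, $|V(H)|\geq 15$, so $|V(G)|\geq 32$. Applying Lemma \ref{5rec}(a) with $s=6$ together with the parity of $d_{G}(H)$ (odd, since $k$ and $|V(H)|$ are odd) gives $d_{G}(H)\geq 15$. On the other hand, each trivial component of $G-X$ has all its $5$ edges in $\nabla(X)$, so an edge count at $X$ yields $d_{G}(H)=15-2|E(X)|$, forcing $E(X)=\emptyset$ and $d_{G}(H)=15$. Lemma \ref{girthX} applied with $t=3$ then gives $g_{0}(G)\geq 9$.

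Next, assume for contradiction $|S|<15$. Since $5|S|-2|E(S)|=\lambda_{8}(G)=15$ forces $|S|$ odd, we have $|S|\in\{9,11,13\}$. Lemma \ref{5rec}(b) gives $\delta(G[S])\geq 3$; applying Lemma \ref{5rec}(a) to $S\cup\{u\}$ yields $|N_{G}(u)\cap S|\leq 2$ for each $u\in\overline{S}$; applied to any $6$-subset and compared with the edge count of $K_{3,3}$, it shows $G$ contains no subgraph isomorphic to $K_{3,3}$. A chord analysis in the would-be spanning odd cycles inside $G[S]$ (using $g_{0}(G)\geq 9$, $g(G)\geq 4$ and $\delta(G[S])\geq 3$) rules out non-bipartite $G[S]$ in each of the three cases, so $G[S]$ is bipartite.

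For the case analysis I mimic the structure of Lemma \ref{7rec}. When $|S|=9$, $|E(S)|=15$ together with bipartiteness and $\delta\geq 3$ force the bipartition to be $(4,5)$ with a tightly constrained degree sequence; I argue by vertex-transitivity that every edge of $G$ lies in a $4$-cycle, and then transfer the information along $\nabla(H)$ to contradict the constraints on the sets $O_{i}$, $m_{i}$, $n_{i}$ obtained at the start of the proof of Lemma \ref{7rec} (these go through here since $\lambda_{6}(G)\geq 14$). When $|S|\in\{11,13\}$, I develop the analogues of Claims 1--3 of Lemma \ref{7rec}: by Lemma \ref{5rec}(c) and $|V(G)|\geq 32$, for any two distinct $\lambda_{8}$-atoms $S_{1},S_{2}$ with $S_{1}\cap S_{2}\neq\emptyset$ one obtains $|S_{1}\cap S_{2}|,|S_{1}\setminus S_{2}|\leq 7$, $d_{G}(S_{1}\cup S_{2})\geq 15$, and hence $d_{G}(S_{1}\cap S_{2})\leq 15$, which together with the bipartite structure of $G[S]$ pins down $G[S_{1}\cap S_{2}]$ and $G[S_{1}\setminus S_{2}]$ as small bipartite graphs; vertex-transitivity is then used to move a low-degree vertex of $G[S]$ onto a high-degree one, producing a structural conflict.

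The main obstacle is the case $|S|=13$: the bipartition $(Z,W)$ of $G[S]$ admits several possible degree patterns (with $|Z|\in\{5,6\}$), and one has to iterate the intersection lemma to show that $V(G)$ is tiled by isomorphic bipartite blocks, a conclusion then contradicted by non-bipartiteness, $E(X)=\emptyset$ and $|V(H)|\geq 15$, echoing the closing argument of Subcase 2.2 of the proof of Lemma \ref{7rec}.
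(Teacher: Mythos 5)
Your global setup is correct and coincides with the paper's: $p=1$, $|X|\geq10$, $|V(H)|\geq15$, $d_{G}(H)=15$, $E(X)=\emptyset$, $g_{0}(G)\geq9$, hence $|S|\in\{9,11,13\}$ with $G[S]$ bipartite and $\delta(G[S])\geq3$. The case analysis that follows, however, omits the piece that carries most of the weight in the paper: you never treat the possibility that $G[S]$ has a vertex of degree $5$. This is not a marginal configuration. For $|S|=13$ the smaller part $Z$ satisfies $|Z|\leq6$ and its degrees sum to $|E(S)|=25>4|Z|$, so a degree-$5$ vertex is forced; for $|S|=9$ the degree sequence on $Z$ can be $(5,4,3,3)$ as well as $(4,4,4,3)$, and similarly for $|S|=11$. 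The paper's entire Case 1 is devoted to this situation and uses a dedicated device --- the edge $u_{1}u_{2}\in\nabla(H)$ with $N_{G}(u_{1})\cap X=\{u_{2}\}$ and an automorphism carrying the degree-$5$ vertex onto $u_{2}$, exploiting $N_{G[S]}(N_{G[S]}(v_{1}))=R$ --- for which none of your proposed tools is a substitute. In particular $|S|=13$ is disposed of entirely inside that case, not by an intersection/tiling argument.

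Two of your sketched endgames also do not close as described. For $|S|=9$, the constraints available here are stronger than in Lemma \ref{7rec}: $|\nabla(u)\cap\nabla(H)|\leq2$ for every $u$, so $O_{3}=O_{4}=O_{5}=\emptyset$ and the only inequalities extractable from ``every edge of $\nabla(H)$ lies in a $4$-cycle'' are $m_{1}\leq2n_{2}$, $n_{1}\leq2m_{2}$ together with $m_{1}+2m_{2}=n_{1}+2n_{2}=15$; these are simultaneously satisfiable (e.g.\ $m_{1}=n_{1}=7$, $m_{2}=n_{2}=4$), so no contradiction follows from counting along $\nabla(H)$ alone. The paper instead locates a vertex $u_{2}\in X$ with exactly two neighbours in $V(H)$ and maps a carefully chosen vertex of $S$ onto it. For $|S|=13$, Lemma \ref{5rec}(c) only gives $|S_{1}\cap S_{2}|,|S_{1}\backslash S_{2}|\leq7$, so intersections split as $(6,7)$ rather than evenly; the rigid ``each vertex lies in exactly two atoms, and the atoms tile $V(G)$'' conclusion you import from the $|S|=12$ subcase of Lemma \ref{7rec} does not follow. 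As written, the proposal is a plan whose hardest steps are asserted rather than proved, and at least the two just named would need genuinely different arguments.
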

\begin{proof} By Lemma \ref{component}, we have $p=1$, $|X|\geq10$ and $|V(H)|\geq15$. By Lemma \ref{5rec}(a), $d_{G}(A)\geq\lambda_{6}(G)\geq14$, $d_{G}(V(H)\cup B)\geq\lambda_{8}(G)$ and $d_{G}(V(H)\backslash B)\geq\lambda_{8}(G)$ for any two subsets $A$ and $B$ of $V(G)$ with $|A|=6$ and $|B|\leq1$. It implies that $G$ has no subgraphs isomorphic to $K_{3,3}$, $d_{G}(H)=15$ and $|\nabla(u)\cap\nabla(H)|\leq2$ for each $u\in V(G)$. Hence $E(X)=\emptyset$ and there is an edge $u_{1}u_{2}\in\nabla(H)$ such that $N_{G}(u_{1})\cap X=\{u_{2}\}$. By Lemma \ref{girthX}, $g_{0}(G)\geq9$.

Suppose $|S|<15$. As $g_{0}(G)\geq9$ and
$15=\lambda_{8}(G)=d_{G}(S)=5|S|-2|E(G[S])|$, it follows that $|S|$
is odd and $G[S]$ is bipartite. By Lemma \ref{5rec}(b),
$\delta(G[S])\geq3$. Let $W$ and $Z$ be the bipartition of $G[S]$
such that $|W|>|Z|$. We have $|W|=\frac{1}{2}(|S|+1)$ if
$|S|\leq11$, and $7\leq|W|\leq8$ if $|S|=13$.

\textbf{Case 1.} \emph{There is a vertex $v_{1}$ in $S$ with $d_{G[S]}(v_{1})=5$.}

Let $R$ be one of $W$ and $Z$ such that $v_{1}\in R$. As
$\delta(G[S])\geq3$ and $|E(S)|=\frac{1}{2}(5|S|-2|E(G[S])|)$, it
follows that $N_{G[S]}(N_{G[S]}(v_{1}))=R$. Since $G$ is
vertex-transitive, there is an automorphism $\varphi_{1}$ of $G$
such that $\varphi_{1}(v_{1})=u_{2}$. Then $\varphi_{1}(R)\subseteq
X\cup V(H)$. Noting that $|\nabla(u)\cap\nabla(H)|\leq2$ for each
$u\in V(G)$, we have $\varphi_{1}(S\backslash R)\cap X=\emptyset$.
Notice that $G$ has no subgraphs isomorphic to $K_{3,3}$. We have
$|\varphi_{1}(R)\cap X|\geq4$ as $|N_{G}(u_{2})\backslash
V(H)|\geq3$ and $\delta(G[S])\geq3$. Then $|\varphi_{1}(S)\cap
V(H)|\leq6$ as $|S|\leq13$. It follows that
$d_{G[\varphi_{1}(S)]}(u_{1})=3$. Then
$d_{G[\varphi_{1}(S)]}(v)\geq4$ for each $v\in
N_{G[\varphi_{1}(S)]}(u_{1})$ by Lemma \ref{5rec}(b). Now we know
$|S|=13$, $|\varphi_{1}(R)\cap X|=4=|\varphi_{1}(R)\cap V(H)|+2$ and
$|\varphi_{1}(S\backslash R)\cap V(H)|=4=|\varphi_{1}(S\backslash
R)\backslash V(H)|+1$. Then $R=Z$ and $|N_{G}(u_{2})\cap V(H)|=2$.

Noting that $|\nabla(u)\cap\nabla(H)|\leq2$ for each $u\in V(G)$, we have $d_{G[\varphi_{1}(S)]}(u)\leq4$ for each $u\in\varphi_{1}(W)$. Since $\delta(G[S])\geq3$ and $G$ has no subgraphs isomorphic to $K_{3,3}$, two vertices in $\varphi_{1}(W)\backslash V(H)$ has exactly 3 neighbors in $\varphi_{1}(Z)\cap X$. So $d_{G[\varphi_{1}(S)]}(u)=4$ for each $u\in\varphi_{1}(W)\backslash N_{G}(u_{2})$ as $|E(S)|=25$. Then there is a vertex $u_{3}\in \varphi_{1}(Z)\cap X$ such that $\varphi_{1}(W)\backslash N_{G}(u_{2})\subseteq N_{G}(u_{3})$.

Assume $\varphi_{1}(Z)\cap V(H)=\{u_{4},u_{5}\}$. Let $\varphi_{2}$ be an automorphism of $G$ such that $\varphi_{2}(u_{4})=u_{2}$. Then $u_{1}\notin\varphi_{2}(N_{G[\varphi_{1}(S)]}(u_{4}))$ and $\varphi_{2}(\{u_{2},u_{3},u_{5}\})\subseteq X$, which implies $|\nabla(u)\cap\nabla(H)|\geq3$ for the vertex $u\in(N_{G}(u_{2})\cap V(H))\backslash\{u_{1}\}$, a contradiction.

\textbf{Case 2.} \emph{$d_{G[S]}(u)\leq4$ for each $u\in S$.}

If $|S|=13$, then, noting $|E(G[S])|=25$ and $5\leq|Z|\leq6$, there is a vertex $u\in Z$ with $d_{G[S]}(u)=5$, a contradiction. Thus $|S|\leq11$. There is a vertex $w\in W$ with $d_{G[S]}(w)=|W|-2$ such that $d_{G[S]}(u)=4$ for each $u\in N_{G[S]}(w)$. Choose a vertex $z\in N_{G[S]}(w)$.

We claim that the edge $u_{1}u_{2}$ is contained in a 4-cycle of
$G$. Suppose not. Since $G$ is vertex-transitive, each vertex in $G$
is incident with an edge contained in no 4-cycles of $G$ and there
is an automorphism $\varphi_{3}$ of $G$ such that
$\varphi_{3}(w)=z$. We know $\varphi_{3}(S)\neq S$. Noting that
$|N_{G[S]}(u)\cap N_{G[S]}(v)|\geq2$ for every subset
$\{u,v\}\subseteq Z$, each edge in $G[S]$ is contained in a 4-cycle
of $G[S]$. Hence $\varphi_{3}(N_{G[S]}(w))\subseteq N_{G[S]}(z)$ and
$N_{G[S]}(u)\subseteq\varphi_{3}(S)$ for each $u\in
\varphi_{3}(N_{G[S]}(w))$. By Lemma \ref{5rec}(c),
$|S\cap\varphi_{3}(S)|\leq7$ and
$d_{G}(S\cap\varphi_{3}(S))+d_{G}(S\cup\varphi_{3}(S))\leq2\lambda_{8}(G)$.
If $|S|=11$, then
$|S\cap\varphi_{3}(S)|\geq|\varphi_{3}(N_{G[S]}(w))\cup
N_{G[S]}(w)|=8$, a contradiction. Thus $|S|=9$. As $G$ has no
subgraphs isomorphic to $K_{3,3}$, we have
$Z=\bigcup_{u\in\varphi_{3}(N_{G[S]}(w))}N_{G[S]}(u)\subseteq\varphi_{3}(S)$.
Hence $|S\cap\varphi_{3}(S)|=7$ and $d_{G}(S\cap\varphi_{3}(S))=17$.
Noting that $d_{G}(S\cup\varphi_{3}(S))\geq\lambda_{8}(G)$ by Lemma
\ref{5rec}(a), we have
$d_{G}(S\cap\varphi_{3}(S))+d_{G}(S\cup\varphi_{3}(S))>2\lambda_{8}(G)$,
a contradiction.

Thus $|N_{G}(u_{2})\cap V(H)|=2$. Let $\varphi_{4}$ be an automorphism of $G$ such that $\varphi_{4}(z)=u_{2}$ if $|S|=9$, and $\varphi_{4}(w)=u_{2}$ if $|S|=11$. If $u_{1}\in\varphi_{4}(S)$, then $|Z|\geq d_{G[\varphi_{4}(S)]}(u_{1})-1+|N_{G[\varphi_{4}(S)]}(N_{G[\varphi_{4}(S)]}(u_{2})\backslash V(H))|\geq2+3=5$ if $|S|=9$, and $|W|\geq7$ if $|S|=11$, a contradiction. Thus $u_{1}\notin\varphi_{4}(S)$. Then $\varphi_{5}(Z)\subseteq X$ if $|S|=9$ and $\varphi_{5}(W)\subseteq X$ if $|S|=11$, which implies $|\nabla(u)\cap\nabla(H)|\geq3$ for the vertex $u\in(N_{G}(u_{2})\cap V(H))\backslash\{u_{1}\}$, a contradiction.
\end{proof}

\begin{Lemma}\label{5rec=16} Suppose $k=6$, $\lambda_{5}(G)=16$ and $g(G)>3$. For a $\lambda_{5}$-atom $S$ of $G$, we have $|S|\geq9$.
\end{Lemma}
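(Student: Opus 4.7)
The plan is to prove $|S|\geq 9$ by contradiction; suppose $|S|\leq 8$. Since $d_G(S)=16=6|S|-2|E(G[S])|$, we have $|E(G[S])|=3|S|-8$, and triangle-freeness of $G[S]$ (from $g(G)>3$) together with Mantel's inequality (Lemma~\ref{triangle}) $|E(G[S])|\leq\lfloor|S|^2/4\rfloor$ rules out $|S|\in\{5,6,7\}$, because the required values $3|S|-8\in\{7,10,13\}$ exceed $\lfloor|S|^2/4\rfloor\in\{6,9,12\}$. Hence $|S|=8$, and equality in Mantel forces $G[S]\cong K_{4,4}$ with parts $W,Z$ of size $4$; every $v\in S$ has exactly two external neighbours $\{a_v,b_v\}\subseteq\overline{S}$.

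Next I would bound $|V(G)|$ from below. Since $g_0(G)\geq 5$, Lemma~\ref{order-girth} gives $|V(G)|\geq kg_0/2\geq 15$, hence $|V(G)|\geq 16$ because $|V(G)|$ is even. If $|V(G)|=16$, a degree count in $G[\overline{S}]$ yields $|E(G[\overline{S}])|=(6\cdot 8-16)/2=16$, and by Mantel's uniqueness $G[\overline{S}]\cong K_{4,4}$ as well; this extremely rigid double-$K_{4,4}$ configuration can then be eliminated by combining the forced component structure of $G-X$ (Lemma~\ref{no4fc}, with $p=1$ from Lemma~\ref{component}(a)) with the parity identity $|V(G)|=|V(H)|+2|X|-3$ and the fact that a nontrivial factor-critical $H$ cannot be bipartite (and must fit inside the two $K_{4,4}$'s). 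Therefore $|V(G)|\geq 18$, which is large enough for Lemma~\ref{5rec}(a) to give $d_G(S\cup\varphi(S))\geq 16$ for any other $\lambda_5$-atom $\varphi(S)$ that meets $S$.

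The crux, and the main obstacle, is deriving a contradiction from the symmetric $K_{4,4}$ structure of $G[S]$. Following the pattern of Lemmas~\ref{6to7rec} and~\ref{7rec}, I would pick $w_1,w_2\in W$ and an automorphism $\varphi$ of $G$ with $\varphi(w_1)=w_2$. If $\varphi(S)\neq S$, then Lemma~\ref{5rec}(c) forces $|S\cap\varphi(S)|=4=|S\setminus\varphi(S)|$, and $G[S\cap\varphi(S)]$, as a $4$-vertex induced subgraph of $K_{4,4}$, must be one of $\overline{K_4}$, $K_{1,3}$, $K_{2,2}$; combining $d_G(S\cap\varphi(S))+d_G(S\cup\varphi(S))\leq 32$ from Lemma~\ref{5rec}(c) with the lower bound from the previous paragraph pins this to $K_{2,2}$. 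If instead $\varphi(S)=S$, then $\varphi$ preserves or swaps the parts of $K_{4,4}$ and matches $\{a_{w_1},b_{w_1}\}$ to $\{a_{w_2},b_{w_2}\}$. Iterating over all such $\varphi$ for pairs in $W$ (resp.\ $Z$), the target is to show that three vertices of $W$ (or of $Z$) must share both external neighbours; those three vertices then have identical $G$-neighbourhoods, so Lemma~\ref{K36} forces $G$ to be bipartite, contradicting the standing hypothesis. I expect the delicate work to lie in eliminating the $K_{2,2}$ intersection case from the first branch, which likely requires counting overlapping atoms through a fixed vertex and running Claim-style arguments in the spirit of Lemma~\ref{7rec}.
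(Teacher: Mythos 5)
Your opening step is fine and matches the paper: from $16=6|S|-2|E(G[S])|$ and Mantel's bound you correctly force $|S|=8$ and $G[S]\cong K_{4,4}$, and your use of Lemma \ref{5rec}(c) plus submodularity to pin any overlapping atom intersection down to a $4$-cycle ($K_{2,2}$) reproduces, by a slightly different route, what the paper gets from $\delta(G[S\cap T])\geq 2$. But from that point on the proposal is a plan rather than a proof, and the plan has two genuine gaps. First, the case in which $S$ \emph{is} an imprimitive block (equivalently, every automorphic image of $S$ is equal to or disjoint from $S$) is not addressed at all: your argument only discusses automorphisms $\varphi$ with $\varphi(w_1)=w_2$ and either $\varphi(S)\neq S$ or $\varphi(S)=S$, and in the latter branch you never derive a contradiction. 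The paper handles this case by letting the orbit of $S$ partition $V(G)$ and then counting, against $c_{0}(G-X)=|X|-2$, the vertices with at least $4$ edges into $\nabla(H)$ (the set $O_{4}$), which Lemma \ref{5rec}(a) bounds by $2$; nothing in your sketch substitutes for this. Second, in the non-imprimitive case your stated target --- that three vertices of $W$ (or $Z$) share both external neighbours, so that Lemma \ref{K36} applies --- is unsupported and quite possibly unattainable: the $K_{2,2}$ intersection only yields \emph{pairs} of vertices with identical neighbourhoods, and you yourself defer the ``delicate work'' of eliminating this configuration. The paper instead exploits exactly this twin-pair property against the global structure coming from the non-$4$-factor-criticality setup: it first shows $E(X)=\emptyset$ (an edge in $X$ would force a vertex with $5$ edges to $\nabla(H)$, i.e.\ $O_{5}\neq\emptyset$), then maps a $4$-cycle vertex $v_{1}$ to a suitable vertex $u_{3}\in X$ and shows two vertices of $V(H)$ land in $\bigcup_{i\geq4}O_{i}$, contradicting $|O_{4}\cap V(H)|\leq1$ and $O_{5}\cup O_{6}=\emptyset$. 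Your proposal never brings $X$, $H$ and the $O_{i}$ bounds into the crux, and it is precisely this interaction that closes the argument.

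A smaller issue: your elimination of $|V(G)|=16$ (needed so that Lemma \ref{5rec}(a) applies to $S\cup\varphi(S)$) is only waved at (``this extremely rigid double-$K_{4,4}$ configuration can then be eliminated\ldots''); the paper avoids this entirely by deriving $|X|\geq7$ (via Lemma \ref{K36}) and $|V(H)|\geq11$, which makes the complement of any relevant union large automatically. As it stands, the proposal identifies the right extremal structure but does not contain the argument that actually yields the contradiction, so it does not establish the lemma.
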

\begin{proof} To the contrary, suppose $|S|\leq8$. As $\frac{1}{2}(6|S|-\lambda_{5}(G))=|E(S)|\leq\frac{1}{4}|S|^{2}$ by Lemma \ref{triangle}, we have $|S|\geq8$. Hence $|S|=8$ and $G[S]$ is isomorphic to $K_{4,4}$.

By Lemma \ref{component}, $p=1$. Then $|X|\geq7$ by Lemma \ref{K36}.
Noting that $d(H)\leq18$ and $H$ is triangle-free and
factor-critical, we have $|V(H)|\geq11$. Let $O_{i}$ be the set of
vertices $u$ in $G$ with $|\nabla(u)\cap\nabla(H)|=i$ for $4\leq
i\leq6$. By Lemma \ref{5rec}(a), we have $d(V(H)\cup
A)\geq\lambda_{5}(G)$ and $d(V(H)\backslash A)\geq\lambda_{5}(G)$
for each subset $A\subseteq V(G)$ with $|A|\leq3$, which implies
$d(H)\geq16$, $O_{5}\cup O_{6}=\emptyset$, $|O_{4}\cap X|\leq1$ and
$|O_{4}\cap V(H)|\leq1$.

Suppose that $S$ is an imprimitive block of $G$. Then the orbits
$S=S_{1}$, $S_{2}$, $\dots$, $S_{m}$ of $S$ under the automorphism
group of $G$ form a partition of $V(G)$. If $E(S_{i})\cap
E(X)\neq\emptyset$ for some $S_{i}$, then $d(H)=16$ and $|S_{i}\cap
V(H)|=6$, which implies $d(V(H)\cup S_{i})\leq14<\lambda_{5}(G)$, a
contradiction. Thus $E(S_{j})\cap E(X)=\emptyset$ for each $S_{j}$.
Noting $c_{0}(G-X)=|X|-2$, it follows that $|O_{4}|\geq3$, which
contradicts the fact that $|O_{4}|=|O_{4}\cap X|+|O_{4}\cap
V(H)|\leq2$.

Suppose next that $S$ is not an imprimitive block of $G$. Then there
is an automorphism $\varphi_{1}$ of $G$ such that
$\varphi_{1}(S)\neq S$ and $\varphi_{1}(S)\cap S\neq\emptyset$. Set
$T=\varphi_{1}(S)$. As $G$ is 6-regular, we have $\delta(G[S\cap
T])\geq2$. By Lemma \ref{5rec}(c), $|S\cap T|\leq4$. Hence $G[S\cap
T]$ is a 4-cycle of $G$. Assume $S\cap
T=\{v_{1},v_{2},v_{3},v_{4}\}$, where $N(v_{1})=N(v_{2})$ and
$N(v_{3})=N(v_{4})$.

By the vertex-transitivity of $G$, for each $u\in V(G)$ there is a vertex $u'$ different from $u$ such that $N(u')=N(u)$. Assume $E(X)\neq\emptyset$. Then $|E(X)|=1$ and let $u_{1}u_{2}$ be the edge in $E(X)$. We know that there is a vertex $u'_{1}$ in $V(H)$ with $N(u_{1}')=N(u_{1})$, which implies $|N(u_{1})\cap V(H)|=5$. Then $O_{5}\neq\emptyset$, a contradiction. Thus $E(X)=\emptyset$. As for each $u\in V(G)$ there is a vertex $u'$ different from $u$ such that $N(u')=N(u)$, it follows that there is a vertex $u_{3}\in X$ with $2\leq|N(u_{3})\cap V(H)|\leq4$. Let $\varphi_{2}$ be an automorphism of $G$ such that $\varphi_{2}(v_{1})=u_{3}$. If $\varphi_{2}(\{v_{3},v_{4}\})\backslash V(H)\neq\emptyset$, then $N(u_{3})\cap V(H)=\varphi_{2}(N(v_{1}))\cap V(H)\subseteq\bigcup_{i=4}^{6}O_{i}$. If $\varphi_{2}(\{v_{3},v_{4}\})\subseteq V(H)$, then $\varphi_{2}(\{v_{3},v_{4}\})\subseteq\bigcup_{i=4}^{6}O_{i}$. So $|(\bigcup_{i=4}^{6}O_{i})\cap V(H)|\geq2$, a contradiction.
\end{proof}

\begin{Lemma}\label{5rec18} Suppose $k=6$, $\lambda_{5}(G)=\lambda_{8}(G)=18$ and $g(G)>3$. For a $\lambda_{8}$-atom $S$ of $G$, we have $|S|\geq15$.
\end{Lemma}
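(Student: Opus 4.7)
The plan is to suppose $|S|\le 14$ and derive a contradiction by a short case analysis on $|S|$.

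First I would pin down the range of $|S|$. From $d_{G}(S)=18=6|S|-2|E(G[S])|$ one gets $|E(G[S])|=3|S|-9$, while Lemma \ref{5rec}(b) applied to a single vertex of $S$ yields $\delta(G[S])\ge 4$; together these force $|S|\ge 9$. When $|S|=9$ the induced graph $G[S]$ would be $4$-regular and, since $g(G)>3$, triangle-free; it cannot be bipartite as $|S|$ is odd, so $g_{0}(G[S])\ge 5$, contradicting Lemma \ref{order-girth}. Hence $10\le|S|\le 14$.

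Second, I would describe the structure around $H$. Since $\lambda_{5}(G)=18>2k$, Lemma \ref{component}(a) gives $p=1$, and Lemma \ref{component}(b) gives $|X|\ge 7$ and $|V(H)|\ge 9$. A degree count across $X$ (using that every trivial component of $G-X$ has all six neighbors in $X$) yields $d_{G}(H)=18-2|E(X)|$, and Lemma \ref{5rec}(a) with $s=5$ forces $d_{G}(H)\ge 18$; hence $E(X)=\emptyset$ and $d_{G}(H)=18$. Applying Lemma \ref{5rec}(a) to $V(H)\cup A$ and $V(H)\setminus A$ for small $A$ then constrains the crossing degrees: $|\nabla(u)\cap\nabla(H)|\le 4$ for every $u\in V(G)$, and at most one vertex in each of $X$ and $V(H)$ achieves the value $4$.

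In the third stage I would, in the spirit of Lemma \ref{5rec=16}, split on whether $S$ is an imprimitive block of $G$. If it is, the orbits of $S$ under $\mathrm{Aut}(G)$ partition $V(G)$ into blocks each inducing a copy of $G[S]$; aligning this partition with $V(H)\cup X\cup Y$ (with $Y$ the set of trivial components of $G-X$) contradicts either the crossing-degree bounds of the previous paragraph or the identity $c_{0}(G-X)=|X|-2$. Otherwise pick an automorphism $\varphi_{1}$ with $T:=\varphi_{1}(S)\ne S$ and $S\cap T\ne\emptyset$; Lemma \ref{5rec}(c) gives $|S\cap T|,|S\setminus T|\le 7$, while $\delta(G[S])\ge 4$ together with $6$-regularity forces $\delta(G[S\cap T])\ge 2$. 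The submodular inequalities of Lemma \ref{5rec}(c), combined with Lemma \ref{5rec}(a) applied to $S\cup T$, then pin $d_{G}(S\cap T)$ and $d_{G}(S\setminus T)$ down to small values and restrict $G[S\cap T]$ and $G[S\setminus T]$ to a short list of triangle-free graphs on at most $7$ vertices. Running through $|S|\in\{10,11,12,13,14\}$ and transporting, via vertex-transitivity, a distinguished vertex of $G[S]$ to a vertex of $X$ or $V(H)$ then violates the crossing-degree bounds of the previous paragraph, as in Subcase~2.1 of Lemma \ref{7rec}.

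The main obstacle will be the top cases $|S|=13$ and $|S|=14$, where $|S\cap T|$ and $|S\setminus T|$ can attain their upper bound $7$ and several isomorphism types for $G[S\cap T]$ survive the inequalities above. As in Subcase~2.2 of Lemma \ref{7rec}, I expect to need a second automorphism $\varphi_{2}$ chosen to send a vertex of non-maximum $G[S]$-degree in $S\cap T$ to one of maximum degree, and then to compare $\varphi_{2}(S)$ with both $S$ and $T$; the resulting triple overlap $S\cap T\cap\varphi_{2}(S)$ should have bipartite structure incompatible with either the edge count $|E(G[S])|=3|S|-9$ or the uniqueness of high-crossing-degree vertices established in Stage~2, completing the proof.
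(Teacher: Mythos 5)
Your opening step contains a genuine gap that removes the hardest case of the lemma. You claim that Lemma \ref{5rec}(b) applied to a single vertex gives $\delta(G[S])\geq4$ and hence $|S|\geq9$. But Lemma \ref{5rec}(b) only applies to subsets $A\subseteq S$ with $1\leq|A|\leq|S|-s$, and here $s=8$; when $|S|=8$ there is no admissible $A$ at all, so no minimum-degree bound of $4$ is available. (From part (a) one only gets $\delta(G[S])\geq3$, via $d_G(S\setminus\{u\})\geq\lambda_5(G)=18$.) Since $|E(G[S])|=3|S|-9=15$ is perfectly consistent with a triangle-free graph on $8$ vertices of minimum degree $3$, the case $|S|=8$ cannot be dismissed by counting: in the paper it is Case~1 and occupies roughly half the proof, requiring one to show $G[S]\cong G_{11}$ (a specific bipartite graph), to rule out duplicated neighborhoods, $K_{3,5}$'s and the configuration $G_{10}$, and then to derive a contradiction by tracking several automorphisms. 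Your proposal as written silently skips all of this.

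A secondary inaccuracy: with $E(X)=\emptyset$ and $d_G(H)=18$, the constraint from Lemma \ref{5rec}(a) applied to $V(H)\cup\{u\}$ is $24-2|\nabla(u)\cap\nabla(H)|\geq18$, i.e.\ $|\nabla(u)\cap\nabla(H)|\leq3$ for \emph{every} vertex, not $\leq4$ with one exceptional vertex per side; the sharper bound is what the paper actually uses (it yields $g_0(G)\geq7$ via Lemma \ref{girthX} and hence bipartiteness of $G[S]$ for $|S|\leq13$, which your later stages need). For the remaining range $10\leq|S|\leq14$ your outline is broadly aligned with the paper's Case~2 (degree-transport via automorphisms, submodularity pinning $G[S\cap T]$ and $G[S\setminus T]$ to $K_{3,3}$ or $K_{3,4}$, and a counting argument on how many $\lambda_8$-atoms contain a given vertex), though it remains a sketch; the decisive defect is the missing $|S|=8$ case.
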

\begin{proof}
To the contrary, suppose $8\leq|S|\leq14$. By Lemma \ref{component},
we have $p=1$, $|X|\geq7$ and $|V(H)|\geq9$. By Lemma \ref{5rec}(a),
we have $d_{G}(V(H)\cup A)\geq\lambda_{5}(G)$ and
$d_{G}(V(H)\backslash A)\geq\lambda_{5}(G)$ for each subset
$A\subseteq V(G)$ with $|A|\leq1$, which implies $d_{G}(H)=18$ and
$|\nabla(u)\cap\nabla(H)|\leq3$ for each $u\in V(G)$. Then
$g_{0}(G)\geq7$ by Lemma \ref{girthX}. It follows that $G[A]$ is
bipartite for each subset $A\subseteq V(G)$ with $|A|\leq13$ and
$d_{G}(A)=18$. Hence $|V(H)|\geq15$, and $G[S]$ is bipartite if
$|S|\leq13$. Then $|V(G)|\geq26$.

\textbf{Case 1.} $|S|=8$.

By Lemma \ref{5rec}(a), $d_{G}(A)\geq\lambda_{5}(G)$ for every
subset $A\subseteq V(G)$ with $7\leq|A|\leq8$, which implies
$\delta(G[S])\geq3$ and $G$ has no subgraphs isomorphic to
$K_{4,4}$. Noting that $|E(G[S])|=\frac{1}{2}(6|S|-18)=15$ and
$G[S]$ is bipartite, there is a vertex $u_{0}\in S$ with
$d_{G[S]}(u_{0})=3$ and $G[S\backslash\{u_{0}\}]$ is isomorphic to
$K_{3,4}$.

\begin{figure}[h]
\begin{center}
\includegraphics[scale=0.7]{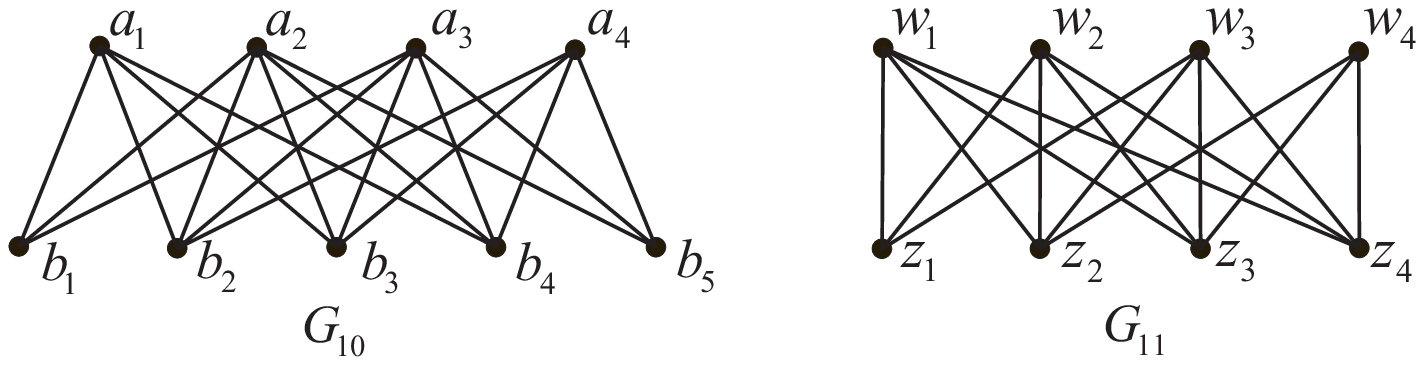}
\\{\small{Figure 3. The illustration in the proof of Lemma \ref{5rec18}.}}
\end{center}
\end{figure}

\textbf{Claim 1.} \emph{There are no two distinct vertices $u$ and $v$ in $G$ with $N_{G}(u)=N_{G}(v)$.}
\vskip 2mm

Suppose that $u_{1}$ and $u_{2}$ are two distinct vertices in $G$ with $N_{G}(u_{1})=N_{G}(u_{2})$. Let $x$, $y$ and $z$ be the 3 vertices in $S$ which have 4 neighbors in $S\backslash\{u_{0}\}$. Noting that $G$ has no subgraphs isomorphic to $K_{4,4}$, it follows by the vertex-transitivity of $G$ that for each vertex $u\in\{x,y,z\}$ there is a vertex $u'\in\{x,y,z\}\backslash\{u\}$ such that $N_{G}(u)=N_{G}(u')$. It follows that $N_{G}(x)=N_{G}(y)=N_{G}(z)$. Then $G$ is bipartite by Lemma \ref{K36}, a contradiction. So Claim 1 holds.

\vskip 2mm
\textbf{Claim 2.} \emph{$G$ has no subgraphs isomorphic to $K_{3,5}$.}
\vskip 2mm

Suppose that $u_{3}$, $u_{4}$ and $u_{5}$ are 3 distinct vertices in $G$ with $|N_{G}(u_{3})\cap N_{G}(u_{4})\cap N_{G}(u_{5})|=5$. By Claim 1 and the vertex-transitivity of $G$, it follows that for each $u\in N_{G}(u_{3})\cap N_{G}(u_{4})$ there are two distinct vertices $u',u''\in(N_{G}(u_{3})\cap N_{G}(u_{4}))\backslash\{u\}$ such that $|N_{G}(u)\cap N_{G}(u')\cap N_{G}(u'')|=5$. It implies that there is a vertex $v\in V(G)\backslash(\{u_{3},u_{4},u_{5}\})$ such that $|N_{G}(v)\cap N_{G}(u_{3})\cap N_{G}(u_{4})|\geq4$. So $G$ has a subgraph isomorphic to $K_{4,4}$, a contradiction. Claim 2 is proved.

\vskip 2mm \textbf{Claim 3.} \emph{$G$ has no subgraphs isomorphic
to $G_{10}$ in Figure $3$.} \vskip 2mm Suppose that $G_{10}$ is a
subgraph of $G$. Let $\varphi_{1}$ be an automorphism of $G$ such
that $\varphi_{1}(a_{2})=a_{1}$. Noting that $d_{G}(V(G_{10})\cup
A)\geq\lambda_{5}(G)$ for each subset $A\subseteq V(G)$ with
$|A|\leq1$ by Lemma \ref{5rec}(a), we have $G_{10}=G[V(G_{10})]$ and
$|N_{G}(u)\cap V(G_{10})|\leq3$ for each $u\in\overline{V(G_{10})}$.
We know $\varphi_{1}(a_{3})\in\{a_{2},a_{3}\}$ if
$|\varphi_{1}(N_{G_{10}}(a_{2}))\cap N_{G_{10}}(a_{1})|=4$. Hence
either each edge in $\nabla(a_{1})$ or each edge in
$\nabla(\varphi_{1}(a_{3}))$ is contained in a 4-cycle of $G$. By
the vertex-transitivity of $G$, each edge in $G$ is contained in a
4-cycle of $G$. It follows that
$|\nabla(u)\cap\nabla(H)|+|\nabla(v)\cap\nabla(H)|\geq3$ for each
edge $uv\in\nabla(H)$.

We claim that $|\nabla(u)\cap\nabla(H)|\leq2$ for each $u\in V(G)$. Otherwise, noting that $|\nabla(u)\cap\nabla(V(H))|\leq3$ for each $u\in V(G)$, we suppose that there is a vertex $u_{6}$ in $G$ with $|\nabla(u_{6})\cap\nabla(H)|=3$. Let $\varphi_{2}$ be an automorphism of $G$ such that $\varphi_{2}(b_{2})=u_{6}$. By considering what will $\varphi_{2}(V(G_{10}))$ be, we can obtain that there is a vertex $u\in\varphi_{2}(N_{G_{10}}(b_{2}))$ with $|\nabla(u)\cap\nabla(H)|\geq4$, a contradiction.

Thus there a vertex $u_{7}\in V(G)$ with $|\nabla(u_{7})\cap\nabla(H)|=2$. Let $\varphi_{3}$ be an automorphism of $G$ such that $\varphi_{3}(a_{2})=u_{7}$. Then there is a vertex $u\in\varphi_{3}(N_{G_{10}}(a_{2}))$ with $|\nabla(u)\cap\nabla(H)|\geq3$, a contradiction. So Claim 3 holds.

\vskip 2mm

By Claim 2, it follows that $G[S]$ is isomorphic to $G_{11}$ in Figure 3 and we label $G[S]$ as in $G_{11}$. Then $|N_{G}(u)\cap S|\leq2$ for each $u\in\overline{S}$ by Claims 2 and 3. Let $\varphi_{4}$ be an automorphism of $G$ such that $\varphi_{4}(z_{1})=z_{4}$. If $\varphi_{4}(N_{G[S]}(z_{1}))\subseteq N_{G[S]}(z_{4})$, then there is a vertex $u\in\varphi_{4}(S)\backslash S$ with $|N_{G}(u)\cap S|\geq3$, a contradiction. Thus $\varphi_{4}(N_{G[S]}(z_{1}))\backslash S\neq\emptyset$.

Assume $\varphi_{3}(N_{G[S]}(z_{1}))\cap N_{G[S]}(z_{1})=\{w_{i},w_{j}\}$. As $|N_{G}(u)\cap S|\leq2$ for each $u\in\varphi_{4}(N_{G[S]}(z_{1}))$ $\backslash S$, it follows that $|\varphi_{4}(\{z_{2},z_{3},z_{4}\})\backslash S|=2$. Then $N_{G}(w_{i})=N_{G}(w_{j})$, contradicting Claim 1.

Assume $\varphi_{3}(N_{G[S]}(z_{1}))\cap N_{G[S]}(z_{1})=\{w_{i'}\}$. Then $|\varphi_{4}(\{z_{2},z_{3},z_{4}\})\backslash S|=2$, which implies that each edge in $\nabla(w_{i'})$ is contained in a 4-cycle of $G$. Then each edge in $G$ is contained in a 4-cycle of $G$ by the vertex-transitivity of $G$. Thus there is a vertex $u_{8}\in V(G)$ with $2\leq|\nabla(u_{8})\cap\nabla(H)|\leq3$. Let $\varphi_{5}$ be an automorphism of $G$ such that $\varphi_{5}(z_{4})=u_{8}$. Noting $|N_{G}(w_{1})\cap N_{G}(w_{2})\cap N_{G}(w_{3})|=4$ and $|N_{G}(\varphi_{4}(w_{1}))\cap N_{G}(\varphi_{4}(w_{2}))\cap N_{G}(\varphi_{4}(w_{3}))|=4$, it follows that there is a vertex $u\in\varphi_{5}(N_{G[S\cup\varphi_{4}(S)]}(z_{4}))$ with $|\nabla(u)\cap\nabla(H)|\geq4$, a contradiction.

Thus $\varphi_{4}(N_{G[S]}(z_{1}))\cap N_{G[S]}(z_{1})=\emptyset$. By Claim 1, it follows that $\varphi_{4}(\{z_{2},z_{3},z_{4}\})=N_{G}(w_{4})\backslash S$. Let $\varphi_{6}$ be an automorphism of $G$ such that $\varphi_{6}(z_{1})=z_{3}$. Similarly, we have
$\varphi_{6}(N_{G[S]}(z_{1}))\cap N_{G[S]}(z_{1})=\emptyset$ and $\varphi_{6}(\{z_{2},z_{3},z_{4}\})=N_{G}(w_{4})\backslash S$. It implies that $G[N_{G}(w_{4})\cup\varphi_{4}(N_{G[S]}(z_{1}))\cup\varphi_{6}(N_{G[S]}(z_{1}))$ has a subgraph isomorphic to $K_{3,5}$ or $G_{10}$, contradicting Claim 2 or Claim 3.

\textbf{Case 2.} $9\leq|S|\leq14$.

By Lemma \ref{5rec}(b), $\delta(G[S])\geq4$. If $|S|=9$, then
$18=\frac{1}{2}(6|S|-\lambda_{8}(G))=|E(S)|\geq\frac{1}{2}(|S|+1)\delta(G[S])\geq20$,
a contradiction. Thus $|S|\geq10$. If $|S|\leq13$, then let $W$ and
$Z$ be the bipartition of $G[S]$ with $|Z|\leq|W|$ and we have
$|W|=|Z|+\frac{1}{2}(1-(-1)^{|S|})$.

\textbf{Subcase 2.1.} Suppose first that $10\leq|S|\leq12$.

We claim that $d_{G[S]}(u)\leq5$ for each $u\in S$. Otherwise, suppose that there is a vertex $v_{1}\in S$ with $d_{G[S]}(v_{1})=6$. Choose a vertex $u_{9}\in X$ with $\nabla(u_{9})\cap\nabla(H)\neq\emptyset$. Let $\varphi_{7}$ be an automorphism of $G$ such that $\varphi_{7}(v_{1})=u_{9}$. As $\delta(G[S])\geq4$, it follows that $\varphi_{7}(S\backslash N_{G}(v_{1}))\subseteq X$, which implies that $|\nabla(u)\cap\nabla(V(H))|\geq4$ for each $u\in\varphi(N_{G}(v_{1}))\cap V(H)$, a contradiction.

Noting that $4\leq d_{G[S]}(u)\leq5$ for each $u\in S$, and recalling $|E(S)|=3|S|-9$ and $|W|=|Z|+\frac{1}{2}(1-(-1)^{|S|})$, it follows that there is a vertex $v_{2}\in Z$ and $v_{3}\in N_{G[S]}(v_{2})$ such that $d_{G[S]}(v_{2})=d_{G[S]}(v_{3})+1=5$.

Now we claim that each edge in $G$ is contained in a 4-cycle of $G$.
Otherwise, suppose that $G$ has an edge contained in no 4-cycles. It
follows by the vertex-transitivity of $G$ that each vertex in $G$ is
incident with an edge contained in no 4-cycles of $G$. Let
$\varphi_{8}$ be an automorphism of $G$ such that
$\varphi_{8}(v_{3})=v_{2}$. Then $\varphi_{8}(S)\neq S$. Note that
each edge in $G[S]$ is contained in a 4-cycle of $G[S]$. We have
$\varphi_{8}(N_{G[S]}(v_{3}))\subseteq N_{G[S]}(v_{2})$ and
$N_{G[S]}(\varphi_{8}(v_{2}))\subseteq\varphi_{8}(N_{G[S]}(v_{2}))$.
It implies $|\varphi_{8}(S)\cap S|\geq8$, contradicting Lemma
\ref{5rec}(c).

Thus $|\nabla(u)\cap\nabla(H)|+|\nabla(v)\cap\nabla(H)|\geq3$ for each edge $uv\in\nabla(H)$. Then there is a vertex $u_{10}\in V(G)$ with $|\nabla(u_{10})\cap\nabla(H)|\geq2$.

Suppose $|S|=10$. Then $|W|=|Z|=5$. Let $\varphi_{9}$ be an automorphism of $G$ such that $\varphi_{9}(v_{2})=u_{10}$. Then there is a vertex $u\in\varphi_{9}(N_{G[S]}(v_{2}))$ with $|\nabla(u)\cap\nabla(H)|\geq4$, a contradiction.

Thus $11\leq|S|\leq12$. Let $R_{i}$ be the set of vertices $u$ in $S$ with $d_{G[S]}(u)=i$ for $i=4,5$. Then $|R_{5}|=|R_{5}\cap Z|=4$ if $|S|=11$, and $|R_{5}\cap W|=|R_{5}\cap Z|=3$ if $|S|=12$.

Suppose that there is a vertex $u_{11}\in V(G)$ with $|\nabla(u_{11})\cap\nabla(H)|=3$. For a vertex $v\in S$, let $\psi$ be an automorphism of $G$ such that
$\psi(v)=u_{11}$. Then $\psi(S)\cap V(H)\neq\emptyset$ and $\psi(S)\backslash V(H)\neq\emptyset$. Noting that $\delta(G[S])\geq4$ and $|\nabla(u)\cap\nabla(H)|\leq3$ for each $u\in V(G)$, it follows that $|\psi(S)\cap X|=4$ and $G[\psi(S)\cap V(H)]$ and $G[\psi(S)\backslash V(H)]$ is isomorphic to $K_{1,4}$ or $K_{2,4}$. It implies
$|N_{G[S]}(v)\cap R_{4}|\geq\lfloor\frac{|S|}{6}\rfloor$ and that there are two vertices $v',v''\in R_{4}$ with $N_{G[S]}(v')=N_{G[S]}(v'')$. If $|S|=11$, then $N_{G[S]}(u)\cap R_{4}=\emptyset$ for each $u\in W\backslash N_{G[S]}(R_{4}\cap Z)$, a contradiction. Thus $|S|=12$. Then $|N_{G[S]}(u)\cap R_{4}|\geq2$ for each $u\in S$. So $\delta(G[R_{4}])\geq2$. Noting $|R_{4}|=|R_{5}|=6$, we have $12\geq4|R_{4}|-\delta(G[R_{4}])|R_{4}|\geq4|R_{4}|-2|E(R_{4})|=|[R_{4},R_{5}]|=5|R_{5}|
-2|E(R_{5})|\geq30-18$, which implies $d_{G[R_{4}]}(u)=2$ for each $u\in R_{4}$. Then $G[R_{4}]$ is a 6-cycle of $G$, which contradicts that $R_{4}$ has two vertices $v'$ and $v''$ with $N_{G[S]}(v')=N_{G[S]}(v'')$.

So $|\nabla(u)\cap\nabla(H)|\leq2$ for each
$u\in V(G)$. Then $|\nabla(u_{10})\cap\nabla(H)|=2$. We can see that there is no automorphism $\varphi$ of $G$ such that $\varphi(v_{2})=u_{10}$, contradicting that $G$ is vertex-transitive.

\textbf{Subcase 2.2.} Now suppose $13\leq|S|\leq14$.

\vskip 2mm
\textbf{Claim 4.} \emph{For two distinct $\lambda_{8}$-atoms $S_{1}$ and $S_{2}$ of $G$ with $S_{1}\cap S_{2}\neq\emptyset$, $G[S_{1}\backslash S_{2}]$ and $G[S_{1}\cap S_{2}]$ are isomorphic to $K_{3,3}$ or $K_{3,4}$.}
\vskip 2mm

By Lemma \ref{5rec}(c), we have $|S_{1}\backslash S_{2}|\leq7$,
$|S_{1}\cap S_{2}|\leq7$, $d_{G}(S_{1}\backslash
S_{2})+d_{G}(S_{2}\backslash S_{1})\leq2\lambda_{8}(G)$ and
$d_{G}(S_{1}\cap S_{2})+d_{G}(S_{1}\cup S_{2})\leq2\lambda_{8}(G)$.
Then $|S_{1}\backslash S_{2}|\geq6$ and $|S_{1}\cap S_{2}|\geq6$. By
Lemma \ref{5rec}(a), each of $d_{G}(S_{1}\backslash S_{2})$,
$d_{G}(S_{2}\backslash S_{1})$, $d_{G}(S_{1}\cap S_{2})$ and
$d_{G}(S_{1}\cup S_{2})$ is not less than $\lambda_{5}(G)$. Noting
$\lambda_{5}(G)=\lambda_{8}(G)=18$, we have $d_{G}(S_{1}\backslash
S_{2})=d_{G}(S_{1}\cap S_{2})=18$. Hence $G[S_{1}\backslash S_{2}]$
and $G[S_{1}\cap S_{2}]$ are isomorphic to $K_{3,3}$ or $K_{3,4}$.
So Claim 4 holds.

\vskip 2mm

Noting that $G[S]$ is not a regular graph, there is an automorphism $\varphi_{10}$ of $G$ such that $\varphi_{10}(S)\neq S$ and $\varphi_{10}(S)\cap S\neq\emptyset$. Then $G[S\backslash\varphi_{10}(S)]$ and $G[S\cap\varphi_{10}(S)]$ are isomorphic to $K_{3,3}$ or $K_{3,4}$ by Claim 4. Set $B=S\cap\varphi_{10}(S)$.

\vskip 2mm
\textbf{Claim 5.} \emph{$S$ has no subset $A$ different from $S\backslash B$ and $B$ such that $G[A]$ is isomorphic to $K_{3,4}$ and $G[S\backslash A]$ are isomorphic to $K_{3,3}$ or $K_{3,4}$.}
\vskip 2mm

Suppose to the contrary that $S$ has a subset $A$ satisfying the
above condition. Assume $|S|=13$. As $|W|=|Z|+1=7$, we konw $|A\cap
W|=4$. It follows that there is a vertex $v_{4}\in S$ with
$d_{G[S]}(v_{4})=6$. Choose a vertex $v_{5}\in S$ such that
$d_{G[S]}(v_{5})\geq5$ and $|\{v_{4},v_{5}\}\cap W|=1$. Let
$\varphi_{11}$ be an automorphism of $G$ such that
$\varphi_{11}(v_{5})=v_{4}$. Then $\varphi_{11}(S)\neq S$ and
$\varphi_{11}(N_{G[S]}(v_{5}))\subseteq N_{G[S]}(v_{4})$,
contradicting that $G[S\cap\varphi_{11}(S)]$ is isomorphic to
$K_{3,3}$ or $K_{3,4}$ by Claim 4. Assume next $|S|=14$. Then each
of $G[S\backslash B]$, $G[B]$, $G[A]$ and $G[S\backslash A]$ is
isomorphic to $K_{3,4}$. As $|E(S)|=33$, we know $d_{G[S]}(B)=9$. If
$|A\cap B|=1$, then $d_{G[S]}(B\backslash
A)=9=\frac{1}{2}d_{G}(B\backslash A)$, contradicting Lemma
\ref{5rec}(b). If $|A\cap B|=6$, then $d_{G[S]}(S\backslash(A\cup
B))=9=\frac{1}{2}d_{G}(S\backslash(A\cup B))$, contradicting Lemma
\ref{5rec}(b). If $2\leq|A\cap B|\leq5$, then $9=d_{G[S]}(B)\geq
d_{G[A]}(A\cap B)+d_{G[S\backslash A]}(B\backslash A)\geq5+5$, a
contradiction. Thus Claim 5 holds.

\vskip 2mm
\textbf{Claim 6.} \emph{Each vertex in $G$ is contained in exactly two distinct $\lambda_{8}$-atoms of $G$.}
\vskip 2mm

By the vertex-transitivity of $G$, it only needs to show that $S'=S$ or $\varphi_{10}(S)$ for a $\lambda_{8}$-atom $S'$ of $G$ with $S'\cap B\neq\emptyset$. Suppose $S'\neq S$ and $S'\neq\varphi_{10}(S)$. By Claims 4 and 5, we have $S'\cap S=B=S'\cap\varphi_{10}(S)$. Then $18=d_{G}(B)\geq d_{G[S]}(B)+d_{G[\varphi_{10}(S)]}(B)+d_{G[S']}(B)\geq3\times9$, a contradiction. Thus Claim 6 holds.

\vskip 2mm

Let $D$ be one of $S\backslash B$ and $B$ such that $G[D]$ is isomorphic to $K_{3,4}$. Choose two vertices $v_{6}$ and $v_{7}$ in $D$ such that $d_{G[D]}(v_{6})=d_{G[D]}(v_{7})-1=3$. By Claim 6, there is only one $\lambda_{8}$-atom $T$ of $G$ which is different from $S$ and contains $v_{6}$. By Claims 4 and 5, we have $S\cap T=D$. By Claim 6, $S$ and $T$ are also the only $\lambda_{8}$-atoms of $G$ which contain $v_{7}$. It implies that there is no automorphism $\varphi$ of $G$ such that $\varphi(v_{6})=v_{7}$, a contradiction.
\end{proof}

\section{Proof of Theorem \ref{main}}

If $G$ is 4-factor-critical, then by Theorem \ref{econnectivity} and Theorem \ref{pfc} we have $k=\lambda(G)\geq5$. So we consider the sufficiency. Suppose $k\geq5$. We will prove that $G$ is 4-factor-critical.

Suppose to the contrary that $G$ is not
4-factor-critical. We know by Theorem \ref{vtg} that $G$ is bicritical. By Lemma \ref{no4fc}, there is a subset
$X\subseteq V(G)$ with $|X|\geq4$ such that $c_{0}(G-X)=|X|-2$ and
every component of $G-X$ is factor-critical. Let $H_{1}$, $H_{2}$, $\dots$, $H_{p}$, $H_{p+1}$, $\dots$, $H_{t}$ be the components of $G-X$, where $t=|X|-2$ and $H_{1}$, $H_{2}$, $\dots$, $H_{p}$ are the nontrivial components of $G-X$. We know $p\geq1$ by Lemma \ref{component}. For each $i\in[p]$, since $H_{i}$ is factor-critical, $\delta(H_{i})\geq2$. For every subset $J\subseteq[t]$, we have
$$\sum_{i\in J}d_{G}(H_{i})+\lambda(G)(t-|J|)\leq\sum_{i=1}^{t}d_{G}(H_{i})\leq d_{G}(X)=k(t+2)-2|E(X)|,$$
which implies
\begin{align}
\sum_{i\in J}d_{G}(H_{i})+2|E(X)|\leq k(|J|+2).
\end{align}
Hence $|E(X)|\leq k$. Set $Y=\bigcup_{j=p+1}^{t}V(H_{j})$.

\textbf{Case 1.} $g(G)=3$.

By Lemma \ref{lessnumedge}, $|E(X)|\geq t-p=|X|-2-p$.

\textbf{Subcase 1.1.} Suppose that $d_{G}(A)\geq2k-2$ for all $A\subseteq V(G)$ with $2\leq|A|\leq|V(G)|-2$.

For each $i\in[p]$, we have $d_{G}(H_{i})\geq2k-2$. If $k$ is odd,
then $d_{G}(H_{i})$ is odd and hence $d_{G}(H_{i})\geq2k-1$. So
$d_{G}(H_{i})\geq2k-\frac{1}{2}(3+(-1)^{k})$ for each $i\in[p]$. Now
we have
\begin{align}
(2k-\frac{1}{2}(3+(-1)^{k}))p+2(|X|-2-p)\leq\sum_{i=1}^{p}d_{G}(H_{i})+2|E(X)|\leq k(p+2),
\end{align}
which implies $(k-2-\frac{1}{2}(3+(-1)^{k}))p+2(|X|-2-k)\leq0$. Hence $|X|\leq k+1$.

Suppose $|X|<k$. Then $p=t=|X|-2$. By Theorem
\ref{vtc}, $|X|\geq\kappa(G)>\frac{2}{3}k$. Hence we know from (2) that $2k\geq(k-\frac{1}{2}(3+(-1)^{k}))p>(k-\frac{1}{2}(3+(-1)^{k}))(\frac{2}{3}k-2)$. That is, $k^{2}-7k+3<0$ if $k$ is odd and $k^{2}-8k+6<0$ otherwise. It follows that $k\leq6$. If $k=6$, then $|X|\geq\kappa(G)=k$ by Lemma \ref{connectivity}, a contradiction. Thus $k=5$. Then $\kappa(G)=|X|=4$. By Lemma \ref{vtc2}, $\tau(G)=2$. It implies that there is an edge $x_{0}y_{0}\in E(G)$ such that $|N_{G}(x_{0})\cap N_{G}(y_{0})|=4$.

Noting $k=5$, we know from (2) that $|E(X)|\leq1$. Choose a vertex
$u\in X$ with $d_{G[X]}(u)=0$. Since $G$ is vertex-transitive, there
is an automorphism $\varphi_{1}$ of $G$ such that
$\varphi_{1}(x_{0})=u$. Assume $\varphi_{1}(y_{0})\in V(H_{1})$
without loss of generality. Noting $|N_{G}(x_{0})\cap
N_{G}(y_{0})|=4$, we have $N_{G}(u)\subseteq V(H_{1})$. Then
$d_{G}(V(H_{1})\cup\{u\})=d_{G}(X)-d_{G}(H_{2})-5\leq20-9-5<2k-2$, a
contradiction.

Thus $k\leq|X|\leq k+1$. Noting $(k-2-\frac{1}{2}(3+(-1)^{k}))p+2(|X|-2-k)\leq0$, we have
$p\leq2$ and $k\leq7$. Then $|Y|=|X|-2-p\geq k-4\geq1$. For any given vertex $v$, let $q$ be the number of triangles containing $v$ in $G$. By the vertex-transitivity of $G$, each vertex in $G$ is contained in $q$ triangles of $G$, which implies that each edge in $G$ is contained in at most $q$ triangles of $G$.

\vskip 2mm

\textbf{Claim 1.} \emph{$E(X)$ is a matching of $G$.}

\vskip 2mm

Assume $p=2$ or $|X|=k+1$. Then we know from (2) that $|E(X)|=|X|-2-p=|Y|$. Noting that there are $q|Y|$ triangles of $G$ containing one vertex in $Y$, it follows that each edge in $E(X)$ is contained in $q$ triangles of $G$, which implies that $E(X)$ is a matching of $G$. Next we assume $p=1$ and $|X|=k$. If two edges in $E(X)$ are adjacent, then $|E(X)|=q\geq2|Y|=2(k-3)$ and hence $d_{G}(H_{1})+2|E(X)|\geq2k-2+4(k-3)>3k$, which contradicts the inequality (1). So Claim 1 holds.

\vskip 2mm

By Claim 1, it follows that each edge incident with a vertex in $Y$ is contained in at most one triangle of $G$. Then, by the vertex-transitivity of $G$, each edge in $E(X)$ is contained in at most one triangle of $G$.

Suppose $|X|=k+1$. From (2), we know $k\leq6$, $p=1$ and $|E(X)|=|Y|=k-2$. Then each edge in $E(X)$ is contained in $q$ triangles of $G$. Noting that each edge in $E(X)$ is contained in at most one triangle of $G$, we have $q=1$. Then $|E(N_{G}(u))|=1$ for each $u\in Y$, which implies $|X|\geq2|E(X)|+(k-|E(X)|-1)=2k-3>k+1$, a contradiction.

Thus $|X|=k$. Then for each $e\in E(X)$ and each $u\in Y$, $G$ has a
triangle containing $e$ and $u$. As each edge in $E(X)$ is contained
in at most one triangle of $G$, it follows that $|Y|=1$, which
implies $p=2$ and $k=5$. From (2), we know
$d_{G}(H_{1})=d_{G}(H_{2})=9$ and $|E(X)|=1$. Assume
$|V(H_{1})|\leq|V(H_{2})|$. Let $u_{1}$ be the vertex in $Y$. For a
vertex $u_{2}\in V(H_{1})$ with $N_{G}(u_{2})\cap X\neq\emptyset$,
we have $|N_{G}(u_{2})\cap X|\leq3$ as $\delta(H_{1})\geq2$. As
$H_{2}$ is a component of $G-N_{G}(u_{1})$ with maximum cardinality,
it follows by the vertex-transitivity of $G$ that $H_{2}$ also is a
component of $G-N_{G}(u_{2})$ with maximum cardinality. Then
$N_{G}(X\backslash N_{G}(u_{2}))\subseteq V(H_{1})\cup Y$. Thus
$d_{G}(V(H_{1})\cup(X\backslash N_{G}(u_{2})))<8=2k-2$, a
contradiction. Hence Subcase 1.1 cannot occur.

\textbf{Subcase 1.2.} Suppose that there is a subset $A\subseteq V(G)$ with $2\leq|A|\leq|V(G)|-2$ such that $d_{G}(A)<2k-2$.

We choose a subset $S$ of $V(G)$ such that
$1<|S|\leq\frac{1}{2}|V(G)|$, $d(S)$ is as small as possible, and,
subject to these conditions, $|S|$ is as small as possible. Then
$d_{G}(S)\leq d_{G}(A)\leq2k-3$. By Corollary \ref{small2}, $d_{G}(S)=|S|\geq k$ and $G[S]$ is $(k-1)$-regular. As
$2k-3<\frac{2}{9}(k+1)^2$, $S$ is an imprimitive block of $G$ by
Theorem \ref{small}. Thus $G[S]$ is vertex-transitive by
Lemma \ref{block}. We also know that the orbits $S=S_{1}$, $S_{2}$, $\dots$, $S_{m_{1}}$ of $S$ under the automorphism group of $G$ form a partition of $V(G)$ and each $G[S_{i}]$ is $(k-1)$-regular.

Set $I_{i}=\{j\in\{1,2,\dots,m_{1}\}: S_{j}\cap V(H_{i})\neq\emptyset\}$ for each $i\in[t]$ and set $\mathscr{M}=\{\bigcup_{j\in I_{i}}S_{j}:i\in[t]\}$. If any two sets in $\mathscr{M}$ are disjoint, then $2|X|\geq2|\bigcup_{U\in\mathscr{M}}\nabla(U)|\geq\sum_{U\in\mathscr{M}}d_{G}(U)\geq
|\mathscr{M}|d_{G}(S)$.

Suppose $|S|=k$. Then each $G[S_{i}]$ is isomorphic to $K_{k}$ and hence it has common vertices with at most one component of $G-X$. Hence $|\mathscr{M}|=c_{0}(G-X)=|X|-2$ and any two sets in $\mathscr{M}$ are disjoint. Then $2|X|\geq
|\mathscr{M}|d_{G}(S)=(|X|-2)k>2|X|$, a contradiction.

Suppose $|S|=k+1$. As $\delta(H_{j})\geq2$ for each $j\in[p]$, we have that for each $S_{i}$, $|S_{i}\backslash X|=|S_{i}\cap Y|=2$ or $S_{i}\backslash X\subseteq V(H_{i'})$ for some $i'\in[t]$. Hence $|\mathscr{M}|\geq p+\frac{1}{2}(t-p)=\frac{1}{2}(t+p)\geq\frac{1}{2}(t+1)=\frac{1}{2}(|X|-1)$ and any two sets in $\mathscr{M}$ are disjoint. Then $2|X|\geq
|\mathscr{M}|d_{G}(S)\geq\frac{1}{2}(|X|-1)(k+1)>2|X|$, a contradiction.

Thus $|S|\geq k+2$. Noting that $(k-1)|S|$ is even and $k+2\leq|S|\leq2k-3$, we have $|S|=k+2$ if $5\leq k\leq6$. For each $i\in[p]$, if $V(H_{i})\cap S_{j}\neq\emptyset$, then $|V(H_{i})\cap S_{j}|\geq2$ as $\delta(H_{i})\geq2$.

\vskip 2mm
\textbf{Claim 2.} \emph{For each $S_{i}$, there is a element $a_{i}\in[p]$ such that $V(H_{a_{i}})\cap S_{i}\neq\emptyset$.}
\vskip 2mm

Suppose $S_{i}\subseteq X\cup Y$. By Lemma \ref{independent}, $|S_{i}\cap Y|\leq\frac{1}{3}|S_{i}|$. If $k\geq6$, then $|E(X)|\geq|E(S_{i}\cap X)|=\frac{1}{2}(k-1)(|S_{i}\cap X|-|S_{i}\cap Y|)\geq\frac{1}{6}(k-1)|S_{i}|\geq\frac{1}{6}(k-1)(k+2)>k$, a contradiction. Thus $k=5$. Then $|S|=k+2$ and $|S_{i}\cap Y|\leq\lfloor\frac{1}{3}|S_{i}|\rfloor=2$. Hence $|E(X)|\geq|E(S_{i}\cap X)|\geq\frac{1}{2}(k-1)(|S_{i}|-4)=\frac{1}{2}(k-1)(k-2)>k$, a contradiction. So Claim 2 holds.

\vskip 2mm
\textbf{Claim 3.} \emph{$X\backslash S_{i}\neq\emptyset$ for each $S_{i}$.}
\vskip 2mm

Suppose $X\subseteq S_{i}$. Choose a component $H_{j}$ of $G-X$ such
that $H_{j}\neq H_{a_{i}}$. Then $|V(H_{j})\cap
S_{i}|=|N_{G}(V(H_{j})\cap S_{i})\backslash
S_{i}|\leq|V(H_{j})\backslash S_{i}|$. Hence $V(H_{j})\backslash
S_{i}\neq\emptyset$. Then there is some $S_{i'}\subseteq
V(H_{j})\backslash S_{i}$. Now we know $d_{G}(V(H_{j})\backslash
S_{i})\geq d_{G}(S)=|S_{i}|$. On the other hand, we have
$d_{G}(V(H_{j})\backslash S_{i})\leq|S_{i}\backslash
V(H_{a_{i}})|<|S_{i}|$, a contradiction. So Claim 3 holds.

\vskip 2mm
\textbf{Claim 4.} \emph{For each $i\in[p]$, we have $d_{G}(H_{i})\geq2k-2$ if there is some $S_{j}$ such that $S_{j}\cap V(H_{i})\neq\emptyset$ and $S_{j}\backslash V(H_{i})\neq\emptyset$.}
\vskip 2mm

Suppose $S_{j}\cap V(H_{i})\neq\emptyset$ and $S_{j}\backslash
V(H_{i})\neq\emptyset$. By Claim 3, $X\backslash
S_{j}\neq\emptyset$. Suppose $|\overline{V(H_{i})\cup S_{j}}|=1$.
Then $\overline{V(H_{i})\cup S_{j}}=X\backslash S_{j}$, which
implies $|\overline{V(H_{i})\cup X}|=1$. Hence $t=2$ and $p=1$,
implying $t=|X|-2\geq k-2>2$, a contradiction. Thus
$|\overline{V(H_{i})\cup S_{j}}|\geq2$. Then $|S_{j}|=d_{G}(S)\leq
d_{G}(V(H_{i})\cup S_{j})\leq|[V(H_{i}),\overline{V(H_{i})\cup
S_{j}}]|+|S_{j}\backslash V(H_{i})|$, which implies
$|[V(H_{i}),\overline{V(H_{i})\cup S_{j}}]|\geq|S_{j}\cap
V(H_{i})|$. Hence $d_{G}(H_{i})\geq d_{G[S_{j}]}(S_{j}\cap
V(H_{i}))+|[V(H_{i}),\overline{V(H_{i})}\cap\overline{S_{j}}]|\geq
d_{G[S_{j}]}(S_{j}\cap V(H_{i}))+|S_{j}\cap V(H_{i})|$. If
$|S_{j}\backslash V(H_{i})|\geq2$, then $d_{G[S_{j}]}(S_{j}\cap
V(H_{i}))\geq2k-4$ by Corollary \ref{2subset}, which implies
$d_{G}(H_{i})\geq2k-4+|S_{j}\cap V(H_{i})|\geq2k-2$. If
$|S_{j}\backslash V(H_{i})|=1$, then $d_{G}(H_{i})\geq
k-1+|S_{j}\cap V(H_{i})|\geq2k$. Claim 4 holds.

\vskip 2mm
\textbf{Claim 5.} \emph{$S_{i}\subseteq V(H_{a_{i}})\cup X$ for each $S_{i}$.}
\vskip 2mm

Suppose, to the contrary, that $G-X$ has a component $H_{b}$ with $V(H_{b})\cap(S_{i}\backslash V(H_{a_{i}}))\neq\emptyset$. Let $\theta$ be an integer such that $\theta=1$ if $|V(H_{b})=1$ and $\theta=0$ otherwise. As $X\backslash S_{i}\neq\emptyset$ by Claim 2, there is some $S_{j}$ with $S_{j}\cap(X\backslash S_{i})\neq\emptyset$. Set $J=\{a_{i},b\}\cup\{a_{j}\}$. For each $i'\in[p]$, we have $d_{G}(H_{i'})\geq d_{G}(S)\geq k+2$ and furthermore $d_{G}(H_{i'})\geq2k-2$ by Claim 4 if $i'\in[p]\cap J$. If $|J|=2$, then, noting that $d_{G[S_{i}]}(V(H_{a_{j}})\cap S_{i})\geq2k-4$ by Corollary \ref{2subset} and $\lambda(G[S_{j}])=k-1$ by Theorem \ref{econnectivity}, we have $d_{G}(H_{a_{j}})\geq d_{G[S_{i}]}(V(H_{a_{j}})\cap S_{i})+d_{G[S_{j}]}(V(H_{a_{j}})\cap S_{j})\geq2k-4+k-1=3k-5$.

Assume $5\leq k\leq 6$. We know that $|S|=k+2$ and $G[S_{i}\cap V(H_{i'})]$ is isomorphic to $K_{2}$ for each $i'\in\{a_{i},b\}\cap[p]$. Hence $S_{i}\subseteq V(H_{a_{j}})\cup V(H_{b})\cup X$. If $\theta=1$, then $|E(G[S_{i}\cap X])|=\frac{1}{2}((k-1)|S_{i}\cap X|-(k-1)-(2k-4))=\frac{1}{2}(k^{2}-5k+6)\geq3$. If $\theta=0$, then $k=6$ as $G[S_{i}]$ is vertex-transitive, which implies $|E(S_{i}\cap X)|=2$. Now we have
\begin{align*}
&\sum_{i'\in J}d_{G}(H_{i'})+2|E(X)|\\
\geq&(3k-5)(3-|J|)+2(2k-2)(|J|-2)+\theta k+(1-\theta)(2k-2)+2|E(S_{i}\cap X)|\\
=&k(|J|+2)+|J|+k-9-\theta(k-2)+2|E(S_{i}\cap X)|>k(|J|+2),
\end{align*}
which contradicts the inequality (1).

Assume $k\geq7$. If $\theta=1$, then $t=|X|-2\geq k-2\geq5$. If $\theta=0$, then $t=|X|-2\geq\lceil\frac{2k}{3}\rceil-2\geq3$ by Theorem \ref{vtc}. Now we have
\begin{align*}
&\sum_{i'\in[t]}d_{G}(H_{i'})+2|E(X)|\\
\geq&(3k-5)(3-|J|)+2(2k-2)(|J|-2)+\theta(p-|J|+1)(k+2)+\\
&(1-\theta)(2k-2+(p-|J|)(k+2))
+(t-p)k+2(t-p)\\
=&k(t+2)+2t+\theta(k+2)+(1-\theta)(2k-2)-|J|-k-7>k(t+2),
\end{align*}
which contradicts the inequality (1). So Claim 5 holds.

\vskip 2mm

By Claims 2 and 5, it follows that $|\mathscr{M}|=p=t$ and any two sets in $\mathscr{M}$ are disjoint. Then $2|X|\geq|\mathscr{M}|d_{G}(S)\geq(|X|-2)(k+2)>2|X|$, a contradiction.

\textbf{Case 2.} $g(G)\geq4$.

For each $j\in[p]$, we know from (1) that $d_{G}(H_{j})\leq3k$. Let $F_{j}$ be a component of $G[\overline{V(H_{j})}]$ which contains a vertex in $V(G)\backslash(V(H_{j})$ $\cup X)$. Then $\nabla(F_{j})$ is a 5-restricted edge-cut of $G$. Hence $\lambda_{5}(G)\leq d_{G}(F_{j})\leq d_{G}(H_{j})\leq3k$. As it follows by Corollary \ref{2subset} that $\lambda_{4}(G)\geq2k-2$, we have $2k-2\leq\lambda_{4}(G)\leq\lambda_{5}(G)\leq3k$.

\vskip 2mm

\textbf{Claim 6.} \emph{If $\lambda_{5}(G)\geq4k-8$ and $k\leq6$, then $p=1$, $|V(H_{1})|\geq7$, $\lambda_{7}(G)\leq3k$ and furthermore, $\lambda_{8}(G)\leq3k$ if $\lambda_{5}(G)\geq4k-8$.}

\vskip 2mm

Suppose $\lambda_{5}(G)\geq4k-8$ and $k\leq6$. Then $p=1$ by Lemma \ref{component}. We claim that $G[\overline{V(H_{1})}]$ is connected. Otherwise, $d_{G}(H_{1})\geq\lambda(G)+d_{G}(F_{1})\geq k+\lambda_{5}(G)>3k$, a contradiction. Suppose $|V(H_{1})|=5$. As $g(G)\geq4$, $H_{1}$ is a 5-cycle of $G$. It follows that $k=5$, $E(X)=\emptyset$ and $|X|\geq8$. Then $g_{0}(G)\geq7$ by Lemma \ref{girthX}, a contradiction. Thus $|V(H_{1})|\geq7$. Then $\nabla(H_{1})$ is a 7-restricted edge-cut of $G$ and $\lambda_{7}(G)\leq d_{G}(V(H_{1}))\leq3k$. If $\lambda_{5}(G)>4k-8$, then $|X|\geq7$ and $|V(H_{1})|\geq9$ by Lemma \ref{component}, which implies $\lambda_{8}(G)\leq d_{G}(H_{1})\leq3k$. So Claim 6 holds.

\vskip 2mm

By Claim 6, we can discuss Case 2 in the following two subcases.

\textbf{Subcase 2.1.} Suppose that $k=5$, $\lambda_{5}(G)=12$ and $\lambda_{7}(G)\geq13$.

We have $\lambda_{4}(G)=12$. As $\lambda_{7}(G)$ exists,
$|V(G)|\geq14$. Then, by Lemma \ref{5rec}(a),
$d_{G}(A)\geq\lambda_{7}(G)$ for each subset $A\subseteq V(G)$ with
$|A|=7$, which implies that $G$ has no subgraphs isomorphic to
$K_{3,4}$. It follows by the vertex-transitivity of $G$ that $G$ has
no subgraphs isomorphic to $K_{2,5}$. By Claim 6, $p=1$ and
$|V(H_{1})|\geq7$. Hence $|X|\geq6$ and $|V(G)|\geq16$. By Lemma
\ref{5rec}(a), $d_{G}(V(H_{1})\cup A)\geq\lambda_{7}(G)$ for each
subset $A\subseteq X$ with $|A|\leq1$, which implies
$d_{G}(H_{1})\geq13$ and $|N_{G}(u)\cap V(H_{1})|\leq3$ for each
$u\in X$. Noting $\delta(H_{1})\geq2$, we have
$|\nabla(u)\cap\nabla(H_{1})|\leq3$ for each $u\in V(G)$.

\vskip 2mm
\textbf{Claim 7.} \emph{There is no subset $A\subseteq V(G)$ with $|A|\leq3$ such that $A\cap V(H_{1})\neq\emptyset$, $|\nabla(A)\cap\nabla(H_{1})|=3|A|$ and $d_{G}((V(H_{1})\cup A)\backslash(V(H_{1})\cap A))\leq12$.}
\vskip 2mm

Suppose to the contrary that such subset $A$ of $V(G)$ exists. Set
$B=(V(H_{1})\cup A)\backslash(V(H_{1})\cap A)$. Then $|B|\geq4$ and
$|\overline{B}|\geq7$. By Lemma \ref{5rec}(a), we have
$d_{G}(B)\geq\lambda_{4}(G)$ and furthermore,
$d_{G}(B)\geq\lambda_{7}(G)$ if $|B|\geq7$. As $d_{G}(B)\leq12$, we
know $|B|\leq6$ and $d_{G}(B)=12$. It implies that $E(V(H_{1})\cap
A)=\emptyset$ and $G[B]$ is isomorphic to $k_{2,2}$ or $K_{3,3}$.
Hence $G[V(H_{1})\cup A]$ is bipartite. Then $H_{1}$ is bipartite,
contradicting that $H_{1}$ is factor-critical. So Claim 7 holds.

\vskip 2mm

As $\lambda_{5}(G)=12<\lambda_{7}(G)$ and $k=5$, each $\lambda_{5}$-atom of $G$ induces a subgraph isomorphic to $K_{3,3}$. Let $T_{1}$, $T_{2}$, $\dots$, $T_{m_{2}}$ be all the subsets of $V(G)$, which induce subgraphs isomorphic to $K_{3,3}$. Let $R_{i}$ be the set of vertices in $X$ with $i$ neighbors in $V(H_{1})$ for $1\leq i\leq3$ and let $Q$ be the set of vertices in $V(H_{1})$ with 3 neighbors in $X$.

\textbf{Subcase 2.1.1.} \emph{Suppose that there are two distinct $T_{i}$ and $T_{j}$ with $T_{i}\cap T_{j}\neq\emptyset$.}

Noting that $G$ has no subgraphs isomorphic to $K_{3,4}$ or
$K_{2,5}$, we have $|T_{i}\cap T_{j}|=2$ or 4. If $|T_{i}\cap
T_{j}|=4$, then $d_{G}(T_{i}\cap T_{j})\leq12<\lambda_{7}(G)$, which
contradicts Lemma \ref{5rec}(a). Thus $|T_{i}\cap T_{j}|=2$. Assume
$T_{i}\cap T_{j}=\{v_{1},v_{2}\}$.

\vskip 2mm

\textbf{Claim 8.} \emph{For each $u\in X$ with $d_{G[X]}(u)=0$ and $N_{G}(u)\cap V(H_{1})\neq\emptyset$, we have $N_{G}(u)\cap V(H_{1})\subseteq Q$ if $u\in R_{1}\cup R_{2}$, and $|N_{G}(u)\cap V(H_{1})\cap Q|\geq1$ if $u\in R_{3}$.}

\vskip 2mm

Since $G$ is vertex-transitive, there is an automorphism $\varphi_{2}$ of $G$ such that $\varphi_{2}(v_{1})=u$. If $u\in R_{1}\cup R_{2}$, then $\varphi_{2}(N_{G}(v_{2}))\subseteq X$, which implies $N_{G}(u)\cap V(H_{1})\subseteq Q$. If $u\in R_{3}$, then $|\varphi_{2}(N_{G}(v_{2}))\cap X|\geq3$, which implies $|N_{G}(u)\cap V(H_{1})\cap Q|\geq1$. So Claim 8 holds.

\vskip 2mm

Assume $E(X)\neq\emptyset$. Then $|E(X)|=1$ and
$\sum_{i=1}^{3}i|R_{i}|=d_{G}(H_{1})=13$, which implies
$\sum_{i=1}^{3}|R_{i}|\geq5$. By Claim 8, $Q\neq\emptyset$. We have
$d_{G}(V(H_{1})\backslash\{u\})\leq12$ for each $u\in Q$,
contradicting Claim 7.

Thus $E(X)=\emptyset$. As $d_{G}(V(H_{1})\cup A)\geq\lambda_{4}(G)$
for each subset $A\subseteq X$ with $|A|=4$ by Lemma \ref{5rec}(a),
we have $|R_{3}|\leq3$. By Claim 6,
$|\nabla(Q)\cap\nabla(H_{1})|\geq|R_{3}|+2|R_{2}|+|R_{1}|=15-2|R_{3}|\geq9$,
which implies $|Q|\geq3$. Choose a subset $Q'\subseteq Q$ with
$|Q'|=3$. Then $d_{G}(V(H_{1})\backslash Q')\leq12$, contradicting
Claim 7. Hence Subcase 2.1.1 cannot occur.

\textbf{Subcase 2.1.2.} So we suppose that any two distinct $T_{i}$ and $T_{j}$ are disjoint.

By the vertex-transitivity of $G$, each vertex in $G$ is contained in a $\lambda_{5}$-atom of $G$. Hence $T_{1}$, $T_{2}$, $\dots$, $T_{m_{2}}$ form a partition of $V(G)$.

Assume $E(X)\neq\emptyset$. Noting $c_{0}(G-X)=|X|-2$ and
$|E(X)|=1$, it follows that there is some $T_{i}$ such that
$T_{i}\cap X\neq\emptyset$, $T_{i}\cap V(H_{1})\neq\emptyset$ and
$E(T_{i})\cap E(X)=\emptyset$. Then there is a vertex $u_{1}\in
T_{i}\cap(R_{3}\cup Q)$. By Claim 7, it follows that $u_{1}\in X$.
We have $d_{G}(V(H_{1})\cup\{u_{1}\})=12<\lambda_{7}(G)$,
contradicting Lemma \ref{5rec}(a).

Thus $E(X)=\emptyset$. Set $\mathscr{B}_{1}=\{T_{j}:|T_{j}\cap X|=3,
j\in[m_{2}]\}$ and $\mathscr{B}_{2}=\{T_{j}:|T_{j}\cap X|<3,
j\in[m_{2}]\}$. Let $D=(\bigcup_{A\in\mathscr{B}_{1}}A\cap
V(H_{1}))\cup(\bigcup_{A\in\mathscr{B}_{2}}A\cap X)$. Noting
$c_{0}(G-X)=|X|-2$ and $p=1$, we have $|D|=3$. By Claim 7, it
follows that $T\subseteq X$. If $|X|\geq7$, then
$d_{G}(H_{1}+D)=12<\lambda_{7}(G)$, which contradicts Lemma
\ref{5rec}(a). Thus $|X|=6$. As $G$ has no subgraphs isomorphic to
$K_{2,5}$, we know that $|R_{2}|=|R_{3}|=3$ and $G[Y\cup R_{3}]$ is
isomorphic to $K_{3,3}$. Choose a vertex $u_{2}\in R_{2}$ and a
vertex $u_{3}\in Y$. Let $\varphi_{3}$ be an automorphism of $G$
such that $\varphi_{3}(u_{3})=u_{2}$. Then $\varphi_{3}(Y)=R_{2}$
and $\varphi_{3}(R_{3})\subseteq V(H_{1})$. It implies $D\subseteq
V(H_{1})$ by the choice of $D$, a contradiction. Hence Subcase 2.1
cannot occur.

\textbf{Subcase 2.2.} Now we suppose that $k\neq5$, $\lambda_{5}(G)\neq12$ or $\lambda_{5}(G)=\lambda_{7}(G)=12$.

Let $S'$ be a $\lambda_{s}$-atom of $G$, where
$$ s=\left\{
\begin{aligned}
&4, \ \textrm{if}\ k\leq6\ \textrm{and}\ \lambda_{5}(G)<4k-8;\\
&7, \ \textrm{if}\ k=5\ \textrm{and}\ \lambda_{5}(G)=\lambda_{7}(G)=12; \\
&6, \ \textrm{if}\ k=5\ \textrm{and}\ \lambda_{5}(G)=\lambda_{6}(G)=13;\\
&7, \ \textrm{if}\ k=5, \lambda_{5}(G)=13\ \textrm{and}\ \lambda_{6}(G)=\lambda_{7}(G)=14;\\
&8, \ \textrm{if}\ k=5, \lambda_{5}(G)=13, \lambda_{6}(G)\geq14\ \textrm{and}\ \lambda_{8}(G)=15;\\
&5, \ \textrm{if}\ k=5\ \textrm{and}\ \lambda_{5}(G)=14;\\
&6, \ \textrm{if}\ k=5\ \textrm{and}\ \lambda_{5}(G)=\lambda_{6}(G)=15;\\
&5, \ \textrm{if}\ k=6\ \textrm{and}\ \lambda_{5}(G)=4k-8;\\
&8, \ \textrm{if}\ k=6\ \textrm{and}\ \lambda_{5}(G)=18;\\
&5, \ \textrm{if}\ k\geq7.
\end{aligned}
\right.
$$

\textbf{Claim 9.} \emph{$S'$ is an imprimitive block of $G$ such that $|S'|>\frac{1}{2}\lambda_{s}(G)$ if $k\leq6$ and $|S'|>\frac{1}{3}\lambda_{s}(G)$ otherwise.}

\vskip 2mm

Clearly, it holds by Lemma \ref{6to7rec} if $k=5$ and
$\lambda_{5}(G)=\lambda_{7}(G)=12$. So we assume $k>5$ or
$\lambda_{5}(G)\neq12$. By Lemma \ref{triangle},
$\frac{1}{2}|S'|^{2}\geq2|E(S')|=k|S|-\lambda_{s}(G)$. If $5\leq
k\leq6$ and $\lambda_{5}(G)<4k-8$, then $\frac{1}{2}|S'|^{2}\geq
k|S'|-\lambda_{s}(G)>k|S'|-4k+8$, which implies
$|S'|>2k-4\geq\textrm{max}\{2(s-1),\frac{1}{2}\lambda_{s}(G)\}$. If
$5\leq k\leq6$ and $\lambda_{5}(G)\geq4k-8$, then $|S'|>2(s-1)$ and
$2|S'|>\lambda_{s}(G)$ by Lemmas \ref{5rec14to15} and
\ref{5rec13}-\ref{5rec18}. If $k\geq7$, then
$\frac{1}{2}|S'|^{2}\geq k|S'|-\lambda_{s}(G)\geq k|S'|-3k$ and
hence $|S'|>k+2>\textrm{max}\{2(s-1),\frac{1}{3}\lambda_{s}(G)\}$.
Suppose $S'$ is not an imprimitive block of $G$. Then there is an
automorphism $\varphi$ of $G$ such that $\varphi(S')\neq S'$ and
$\varphi(S')\cap S'\neq\emptyset$. By Lemma \ref{5rec}(c),
$|S'|=|S'\cap\varphi(S')|+|S'\backslash\varphi(S')|\leq2(s-1)$, a
contradiction. So Claim 9 holds.

\vskip 2mm

By Claim 9 and Lemma \ref{block}, $G[S']$ is vertex-transitive and hence it is $(k-1)$-regular if $k\leq6$ and is $(k-1)$-regular or $(k-2)$-regular otherwise. From Claim 9, we also know that the orbits $S'=S'_{1}$, $S'_{2}$, $\dots$, $S'_{m_{3}}$ of $S'$ under the automorphism group of $G$ form a partition of $V(G)$.

\vskip 2mm

\textbf{Claim 10.} \emph{$G[S']$ is $(k-1)$-regular.}

\vskip 2mm

Suppose that $G[S']$ is $(k-2)$-regular. Then $k\geq7$, $s=5$ and
$2|S'|=\lambda_{s}(G)\leq3k$, which implies $|S'|\leq\frac{3}{2}k$.
By Lemma \ref{triangle},
$\frac{1}{4}|S'|^{2}\geq|E(S')|=\frac{1}{2}(k-2)|S'|$, which implies
$|S'|\geq2(k-2)$. Now $2(k-2)\leq|S'|\leq\frac{3}{2}k$, which
implies $k\leq8$ and $|S'|=2(k-2)$. Hence $G[S']$ is isomorphic to
$K_{k-2,k-2}$. For each $i\in[p]$, noting $3k\geq
d_{G}(H_{i})\geq\lambda_{s}(G)=4(k-2)$ and that $d_{G}(H_{i})$ has
the same parity with $k$, we have $d_{G}(H_{i})=3k$. Hence $p=1$,
$E(X)=\emptyset$, $|V(H_{1})|>5$ and $|X|\geq k$. Noting that
$c_{0}(G-X)=|X|-2$, there is some $S'_{i}$ with $S'_{i}\cap
X\neq\emptyset$ and $S'_{i}\cap V(H_{1})\neq\emptyset$. Then there
is a vertex $u\in S'_{i}$ with $|\nabla(u)\cap\nabla(H_{1})|\geq
k-2$. Then $d_{G}(V(H_{1})\cup\{u\})\leq
d_{G}(H_{1})-(k-4)=2k+4<4(k-2)=\lambda_{s}(G)$ if $u\in X$ and
$d_{G}(V(H_{1})\backslash\{u\})<\lambda_{s}(G)$ otherwise,
contradicting Lemma \ref{5rec}(a). So Claim 10 holds.

\vskip 2mm

As $\delta(H_{i})\geq2$ for each $i\in[p]$, it follows by Claim 10 that $\delta(G[V(H_{j})\cap S'_{i}])\geq1$ if $V(H_{j})\cap S'_{i}\neq\emptyset$.

\vskip 2mm
\textbf{Claim 11.} \emph{For each $S'_{i}$, $S'_{i}\backslash(X\cup Y)\neq\emptyset$ or $|S'_{i}\cap X|=|S'_{i}\cap Y|$.}
\vskip 2mm

Suppose $|S'_{i}\cap X|>|S'_{i}\cap Y|$ for some $S'_{i}\subseteq X\cup Y$. If $G[S'_{i}]$ is bipartite, then $|S'_{i}\cap Y|\leq|S'_{i}\cap X|-2$. If $G[S'_{i}]$ is non-bipartite, then $|S'_{i}\cap Y|\leq\alpha(G[S'_{i}])\leq\frac{1}{2}|S'_{i}|-\frac{k-1}{4}$ by Lemma \ref{independent}, which implies $|S'_{i}\cap Y|\leq|S'_{i}\cap X|-\frac{k-1}{2}\leq|S_{i}\cap X|-2$. Thus $|E(S'_{i}\cap X])|=\frac{1}{2}(k-1)(|S'_{i}\cap X|-|S'_{i}\cap Y|)\geq k-1$. Noting $d_{G}(H_{1})\geq\lambda_{5}(G)\geq2k-2$, we have
$d_{G}(H_{1})+2|E(X)|\geq2k-2+2(k-1)>3k$, a contradiction. So Claim 11 holds.

\vskip 2mm

\textbf{Subcase 2.2.1.} Suppose $|S'|\leq2k-1$.

\vskip 2mm
\textbf{Claim 12.} \emph{If $S'_{i}\cap V(H_{j})\neq\emptyset$ for some $j\in[p]$, then $S'_{i}\subseteq V(H_{j})\cup X$.}
\vskip 2mm

Suppose $S'_{i}\cap V(H_{j})\neq\emptyset$ for some $j\in[p]$ and $S'_{i}\cap V(H_{j'})\neq\emptyset$ for some $j'\in[t]\backslash\{j\}$. As $\delta(G[S'_{i}\cap V(H_{j})])\geq1$, there is an edge $x_{1}y_{1}\in E(S'_{i}\cap V(H_{j}))$. Then $|S'_{i}\cap(V(H_{j})\cup X)|\geq|N_{G[S'_{i}]}(x_{1})\cup N_{G[S'_{i}]}(y_{1})|=2k-2$. It implies $|S'_{i}\cap V(H_{j'})|=1$ and $|S'_{i}|=2k-1$. Then $|V(H_{j'})|=1$ and $|X|\geq |N_{G}(V(H_{j'}))|=k$. Hence $|\overline{V(H_{j})\cup S'_{i}}|\geq|N_{G}(V(H_{j'}))\backslash S'_{i}|+(c_{0}(G-X)-2)\geq1+k-4\geq2$. By Corollary \ref{2subset}, we have
\begin{align*}
2k-2&\leq d_{G}(V(H_{j})\cup S'_{i})\\
&\leq d_{G}(H_{j})-d_{G[S'_{i}]}(S'_{i}\cap V(H_{j}))+|S'_{i}\backslash V(H_{j})|\\
&=d_{G}(H_{j})-((k-1)|S'_{i}\cap X|-2|E(S'_{i}\cap X)|-(k-1))+|S'_{i}\cap X|+1\\
&=d_{G}(H_{j})+2|E(S'_{i}\cap X)|-(k-2)|S'_{i}\cap X|+k\\
&\leq 3k-(k-2)(k-1)+k=-k^{2}+7k-2,
\end{align*}
which implies $k=5$. It is easy to verify that there is no triangle-free non-bipartite 4-regular graph of order 9, which implies $|S'|\neq9=2k-1$, a contradiction. So Claim 12 holds.

\vskip 2mm

Set $I'_{i}=\{j\in[m_{3}]: S'_{j}\cap V(H_{i})\neq\emptyset\}$ for
each $i\in[t]$ and $\mathscr{M}'=\{\bigcup_{j\in
I'_{i}}S'_{j}:i\in[t]\}$. Then any two sets in $\mathscr{M}'$ are
disjoint by Claim 12. By Lemma \ref{5rec}(a),
$d_{G}(U)\geq\lambda_{s}(G)$ for each $U\in \mathscr{M}'$. Then, by
Claim 11, we have
\begin{align*}
&2(p+2+(k-1)(|\mathscr{M}'|-p))\\
=&2|X|\geq2|\bigcup_{U\in\mathscr{M}'}\nabla(U)|=
\sum_{U\in\mathscr{M}'}d_{G}(U)\geq|\mathscr{M}'|\lambda_{s}(G)\geq|\mathscr{M}'|(2k-2),
\end{align*}
which implies $p\leq\frac{2}{k-2}<1$, a contradiction. Hence Subcase 2.2.1 cannot occur.

\textbf{Subcase 2.2.2} So we suppose $|S'|\geq2k$.

We have $\lambda_{s}(G)=|S'|\geq2k$. If $s=4$, then $\lambda_{5}(G)\geq\lambda_{s}(G)\geq2k$. If $s\geq5$, then $\lambda_{5}(G)\geq2k$ by the choice of $s$. Then $2kp\leq p\lambda_{5}(G)\leq\sum_{i=1}^{p}d_{G}(H_{i})+2|E(X)|\leq k(2+p)$, which implies
$p\leq2$.

Let
$$\mathscr{N}=\{S'_{i}: S'_{i}\cap X\neq\emptyset\ \textrm{and}\ S'_{i}\backslash(X\cup Y)\neq\emptyset, i\in[m_{3}]\}.$$
By Claim 11, $\sum_{A\in\mathscr{N}}(|A\cap X|-|A\cap Y|)=\sum_{i=1}^{m_{3}}(|S'_{i}\cap X|-|S'_{i}\cap Y|)=|X|-|Y|=p+2$. Noting $|A\cap X|>|A\cap Y|$ for each $A\in\mathscr{N}$, we have $1\leq|\mathscr{N}|\leq p+2$. Choose a set $S'_{j_{1}}\in\mathscr{N}$. Without loss of generality, we assume $S'_{j_{1}}\cap V(H_{1})\neq\emptyset$.

Suppose $p=2$. Then $E(X)=\emptyset$ and
$2k=\lambda_{5}(G)=d_{G}(H_{1})=d_{G}(H_{2})$. Hence
$\lambda_{4}(G)=\lambda_{5}(G)=2k=|S'|$. For each $u\in V(G)$ and
each $i\in[p]$, we have $d_{G}(V(H_{i})\cup\{u\})\geq\lambda_{4}(G)$
and $d_{G}(V(H_{i})\backslash\{u\})\geq\lambda_{4}(G)$ by Lemma
\ref{5rec}(a), which implies $|\nabla(u)\cap\nabla(H_{i})|\leq k-3$.
Hence $|S'_{j_{1}}\backslash V(H_{1})|\geq2$ and
$\delta(G[S'_{j_{1}}\cap V(H_{1})])\geq2$, which implies
$|S'_{j_{1}}\cap V(H_{1})|\geq4$. Choose an edge $x_{2}y_{2}\in
E(S'_{j_{1}}\cap V(H_{1}))$. Then
$|S'_{j_{1}}\backslash(V(H_{1})\cup
X)|\leq|S'_{j_{1}}\backslash(N_{G[S'_{j_{1}}]}(x_{2})\cup
N_{G[S'_{j_{1}}]}(y_{2}))|=2$. It follows that $S'_{j_{1}}\cap
V(H_{2})=\emptyset$. Noting that $d_{G}(S'_{j_{1}}\cap
V(H_{1}))\geq2k-4$ by Corollary \ref{2subset}, we have
$|S'_{j_{1}}\cap X|\geq|S'_{j_{1}}\cap Y|+2$. Now
\begin{align*}
d_{G}(V(H_{1})\cup S'_{j_{1}})&\leq d_{G}(H_{1})-d_{G[S'_{j_{1}}]}(V(H_{1})\cap S'_{j_{1}})+|S'_{j_{1}}\backslash V(H_{1})|\\
&=2k-(k-1)(|S'_{j_{1}}\cap X|-|S'_{j_{1}}\cap Y|)+|S'_{j_{1}}\backslash V(H_{1})|\\
&\leq2k-2(k-1)+2k-4<2k=\lambda_{4}(G),
\end{align*}
contradicting Lemma \ref{5rec}(a).

Thus $p=1$. Suppose $|\mathscr{N}|=1$. Then $|S'_{j_{1}}\cap
X|=|S'_{j_{1}}\cap Y|+3$ and there is some
$S'_{j}\subseteq\overline{V(H_{1})\cup S'_{j_{1}}}$. We know by
Claim 11 that $G[S']$ is bipartite. Hence there is some
$S'_{j'}\subseteq V(H_{1})\backslash S'_{j_{1}}$. By Lemma
\ref{5rec}(a), we have
\begin{align*}
|S'|=\lambda_{s}(G)\leq d_{G}(V(H_{1})\cup S'_{j_{1}})&\leq d_{G}(H_{1})-d_{G[S'_{j_{1}}]}(S'_{j_{1}}\backslash V(H_{1}))+|S'_{j_{1}}\backslash V(H_{1})|\\
&=d_{G}(H_{1})+2|E(S'_{j_{1}}\cap X)|-3(k-1)+|S'_{j_{1}}\backslash
V(H_{1})|\\& \leq3k-3(k-1)+|S'_{j_{1}}\backslash V(H_{1})|.
\end{align*}
Similarly, we can obtain $|S'|\leq d_{G}(H_{1}-S'_{j_{1}})\leq3+|S'_{j_{1}}\cap V(H_{1})|$. Then $2|S'|\leq 6+|S'_{j_{1}}|$, which implies $|S'|\leq6<2k$, a contradiction.

Thus $|\mathscr{N}|\geq2$. For each $S'_{i}\in \mathscr{N}$, noting
$|S'_{i}\cap V(H_{1})|\geq2$, we have $d_{G[S'_{i}]}(S'_{i}\cap
V(H_{1}))\geq2k-4$ by Corollary \ref{2subset} if $|S'_{i}\backslash
V(H_{1})|\geq2$, which implies that $|S'_{i}\cap X|=1$ if
$|S'_{i}\cap X|=|S'_{i}\cap Y|+1$. If $|\mathscr{N}|=3$, then
$|S'_{i}\cap X|=1$ for each $S'_{i}\in\mathscr{N}$ and hence
$d_{G}(V(H_{1})\cup(\bigcup_{S'_{i}\in\mathscr{N}}S'_{i}))\leq
d_{G}(H_{1})-3(k-2)\leq6<\lambda_{s}(G)$, which contradicts Lemma
\ref{5rec}(a). Thus $|\mathscr{N}|=2$. Assume
$\mathscr{N}=\{S'_{j_{1}},S'_{j_{2}}\}$ and $|S'_{j_{1}}\cap X|=1$.
We know that there is some $S'_{j}\subseteq
V(G)\backslash(V(H_{1})\cup S'_{j_{1}}\cup S'_{j_{2}})$. By Lemma
\ref{5rec}(a),
\begin{align*}
|S|=\lambda_{s}(G)&\leq d_{G}(V(H_{1})\cup S'_{j_{1}}\cup S'_{j_{2}})\\
&\leq d_{G}(H_{1})-d_{G[S'_{j_{2}}]}(S'_{j_{2}}\cap V(H_{1}))+|S'_{j_{2}}\backslash V(H_{1})|-(k-2)\\
&=d_{G}(H_{1})+2|E(S'_{j_{2}}\cap X)|-2(k-1)+|S'_{j_{2}}\backslash V(H_{1})|-(k-2)\\
&\leq3k-3k+4+|S'_{j_{2}}\backslash V(H_{1})|.
\end{align*}
Similarly, we can obtain $|S'|\leq d_{G}((V(H_{1})\cup
S'_{j_{1}})\backslash S'_{j_{2}})\leq4+|S'_{j_{2}}\cap V(H_{1})|$.
Then $2|S'|\leq 8+|S'_{j_{2}}|$, which implies $|S'|\leq8<2k$, a
contradiction.

The proof is complete.


\begin{thebibliography}{6}
\addtolength{\itemsep}{-2ex}
\bibitem{Andrasfai} B. Andr\'{a}sfai, P. Erd\"{o}s, V. T. S\'{o}s,
On the connection between chromatic number, maximal clique and
minimal degree of a graph, Discrete Math. 8 (1974) 205-218.
\bibitem{Biggs} N. Biggs, Algebraic Graph Theory, Cambridge University Press, Cambridge, 1993, pp. 36-38.
\bibitem{Chan} O. Chan, C.C. Chen, Q. Yu, On 2-extendable obelian Cayley graphs, Discrete Math. 146 (1995) 19-32.
\bibitem{Chen} C.C. Chen, J. Liu, Q. Yu, On the classification of 2-extendable Cayley graphs on dihedral groups, Australas. J. Combin. 6 (1992) 209-219.
\bibitem{Diestel} R. Diestel, Graph Theory, Springer, New York,
2006, p. 41.
\bibitem{Fabrega} J. F\`{a}brega, M.A. Fiol, Extraconnectivity of graphs with large girth, Discrete Math. 127 (1994) 163-170.
\bibitem{Favarvon} O. Favaron, On $k$-factor-critical graphs, Discuss. Math. Graph Theory 16 (1996) 41-51.
\bibitem{O. Favaron} O. Favaron, Extendability and factor-criticality, Discrete Math. 213 (2000) 115-122.
\bibitem{Gallai} T. Gallai, Neuer Beweis eines Tutte-schen Satzes, Magyar Tud. Akad. Matem. Kut. Int. K\"{o}zl. 8 (1963) 135-139.
\bibitem{Heuvel} J. Heuvel, B. Jacskon, On the edge connectivity, hamiltonicity, and
toughness of vertex transitive graphs, J. Combin. Theory Ser. B 77
(1999) 138-149.
\bibitem{Holtkamp} A. Holtkamp, D. Meierling, L.P. Montejano, $k$-restricted edge-connectivity in triangle-free graphs, Discrete Appl. Math. 160 (2012) 1345-1355.
\bibitem{lovasz} L. Lov\'{a}sz, On the structure of factorizable graphs, Acta Math. Acad. Sci. Hungar. 23 (1972) 179-195.
\bibitem{plummer} L. Lov\'{a}sz, M.D. Plummer, Matching Theory, North-Holland, Amsterdam, 1986, p. 207.
\bibitem{Mader} W. Mader, Minimale $n$-fach kantenzusammenh\"{a}ngenden Graphen, Math. Ann. 191 (1971) 21-28.
\bibitem{Mantel} W. Mantel, Problem 28, Wiskundige Opgaven 10 (1907)
60-61.
\bibitem{Miklavi} \v{S}. Miklavi\v{c}, P. \v{S}parl, On extendability of Cayley graphs, Filomat 23 (2009) 93-101.
\bibitem{M.D. Plummer} M.D. Plummer, On $n$-extendable graphs, Discrete Math. 31 (1980) 201-210.
\bibitem{R. Tindell}R. Tindell,  Connectivity of Cayley graphs, in: D.Z. Du, D.F. Hsu (Eds.). Combinatorial Network Theory, Kluwer, Dordrecht, 1996, pp. 41-64.
\bibitem{Watkins} M.E. Watkins, Connectivity of transitive graphs, J. Combin. Theory 8 (1970) 23-29.
\bibitem{Yu} Q. Yu, Characterizations of various matching extensions in graphs, Australas. J. Combin. 7 (1993) 55-64.
\bibitem{Yang} M. Yang, Z. Zhang, C. Qin, X. Guo, On super 2-restricted and 3-restricted edge-connected vertex transitive graphs, Discrete Math. 311 (2011) 2683-2689.
\bibitem{Zhang} H. Zhang, W. Sun, 3-Factor-criticality of vertex-transitive graphs, J. Graph Theory, revised, arXiv: 1212.3940.
\end{thebibliography}
\end{document}